%% file: main.tex
\documentclass[12pt]{amsart}

\usepackage{amsmath}
\usepackage{amsfonts}
\usepackage{amssymb}
\usepackage{amscd}
\usepackage{graphicx}
\usepackage[abbrev,alphabetic]{amsrefs}
\RequirePackage[dvipsnames,usenames]{color}
\usepackage{soul,xcolor}
\setstcolor{red}
\usepackage{stmaryrd}
\usepackage{mathtools}
\usepackage{booktabs}
\usepackage{multirow}
\usepackage{mathdots}
\newtagform{tiny}{\tiny(}{)}

\usepackage{mathtools}
\usepackage{hyperref}
\usepackage[margin=1.25in]{geometry}

\usepackage{amsthm}
\usepackage{comment}
\usepackage[all,cmtip]{xy}
\usepackage{tikz-cd}
\usetikzlibrary{cd}

\usepackage[all]{xy}

\makeatletter
\def\@tocline#1#2#3#4#5#6#7{\relax
  \ifnum #1>\c@tocdepth % then omit
  \else
    \par \addpenalty\@secpenalty\addvspace{#2}%
    \begingroup \hyphenpenalty\@M
    \@ifempty{#4}{%
      \@tempdima\csname r@tocindent\number#1\endcsname\relax
    }{%
      \@tempdima#4\relax
    }%
    \parindent\z@ \leftskip#3\relax \advance\leftskip\@tempdima\relax
    \rightskip\@pnumwidth plus4em \parfillskip-\@pnumwidth
    #5\leavevmode\hskip-\@tempdima
      \ifcase #1
       \or\or \hskip 1em \or \hskip 2em \else \hskip 3em \fi%
      #6\nobreak\relax
    \hfill\hbox to\@pnumwidth{\@tocpagenum{#7}}\par% <---- \dotfill -> \hfill
    \nobreak
    \endgroup
  \fi}
\makeatother

\newsavebox{\pullback}
\sbox\pullback{%
\begin{tikzpicture}%
\draw (0,0) -- (1ex,0ex);%
\draw (1ex,0ex) -- (1ex,1ex);%
\end{tikzpicture}}

\newsavebox{\pullbackdl}
\sbox\pullbackdl{%
\begin{tikzpicture}%
\draw (-1ex,0ex) -- (0ex,0ex);%
\draw (0ex,-1ex) -- (0ex,0ex);%
\end{tikzpicture}}

\newcommand{\rup}[1]{\lceil #1 \rceil}
\newcommand{\rdown}[1]{\lfloor #1 \rfloor}

\newcommand{\Q}{\mathbb{Q}}

\newcommand{\cHom}{\mathop{\mathcal{H}\! \mathit{om}}}

\DeclareMathOperator{\DDB}{\underline{\Omega}}

\newcommand{\bQ}{\mathbb{Q}}

\newcommand{\bZ}{\mathbb{Z}}

\newcommand{\cC}{\mathcal{C}}

\newcommand{\cH}{\mathcal{H}}

\newcommand{\cM}{\mathcal{M}}
\newcommand{\cO}{\mathcal{O}}

\newcommand{\sO}{\mathcal{O}}

\newcommand{\m}{\mathfrak{m}}
\newcommand{\n}{\mathfrak{n}}
\newcommand{\p}{\mathfrak{p}}

\DeclareMathOperator{\Supp}{Supp}
\DeclareMathOperator{\Spec}{Spec}

\DeclareMathOperator{\codim}{codim}

\DeclareMathOperator{\Ext}{Ext}

\DeclareMathOperator{\Exc}{Exc}

\def\phi{\varphi}
\def\epsilon{\varepsilon}

\newcommand*{\coloneq}{\mathrel{\mathop:}=}

\newcommand{\kdot}{{{\,\begin{picture}(1,1)(-1,-2)\circle*{2}\end{picture}\,}}}

\theoremstyle{plain}
\newtheorem{theorem}{Theorem}[section]

\newtheorem{proposition}[theorem]{Proposition}
\newtheorem{lemma}[theorem]{Lemma}
\newtheorem{corollary}[theorem]{Corollary}

\newtheorem{claim}[theorem]{Claim}
\newtheorem*{claim*}{Claim}

\newtheorem{theoremA}{Theorem}

\theoremstyle{definition}
\newtheorem{definition}[theorem]{Definition}

\newtheorem{example}[theorem]{Example}

\newtheorem*{setup*}{Setup}

\theoremstyle{remark}
\newtheorem{remark}[theorem]{Remark}



\numberwithin{equation}{theorem}
\title[Higher F-rational singularities]
{Higher F-rational singularities}
\keywords{Higher $F$-rational singularities; Higher rational singularities.}
\subjclass[2020]{13A35,14F10,14B05}
\author{Tatsuro Kawakami}
\address{Department of Mathematics, Graduate School of Science, Kyoto University, Kyoto 606-8502, Japan} 
\email{tatsurokawakami0@gmail.com}
\author{Jakub Witaszek} 
\address{Northwestern University, Department of Mathematics, Lunt Hall, 2033 Sheridan Road, Evanston, IL 60208, USA}
\email{jakub.witaszek@northwestern.edu}

\begin{document}

\begin{abstract}
We introduce higher $F$-rationality generalising $F$-rationality. We prove that a normal variety over a field of characteristic zero is $m$-rational if and only if it is $m$-$F$-rational after reduction modulo a sufficiently large prime $p$. Additionally, we establish new results on the logarithmic extension of forms.
\end{abstract}

\subjclass[2020]{14B05, 14F10, 13A35}   
\keywords{Higher F-rational singularities, Differential forms, extension theorem}
\maketitle

\setcounter{tocdepth}{2}
\tableofcontents
\section{Introduction}

One of the key ideas that has been influencing the development of singularity theory is the interplay between complex and arithmetic settings. In this context, classical Du Bois and rational singularities in characteristic $0$ are related to $F$-injective and $F$-rational singularities in positive characteristic.

Motivated by recent developments in the study of higher Du Bois and higher rational singularities \cites{JKSY22,Mustata-Popa22, CDM24,MOPW23,SVV,Friedman-Laza2,Friedman-Laza1, popa2024injectivityvanishingdubois,Kawakami-Witaszek(ch=0),Park-Popa1,Park-Popa2}, which detect Hodge-theoretic phenomena, we introduced in \cite{Kawakami-Witaszek} higher $F$-injective singularities in positive characteristic. The goal of this article is to introduce higher $F$-rational singularities generalising the higher rational singularities of Friedman and Laza.

\begin{definition}[see Definition \ref{def:prekFinj-iso}]
Let $X$ be a $d$-dimensional normal variety defined over a perfect field $k$ of characteristic $p>0$.
Fix an integer $m \geq 0$. 
We say that $X$ is \emph{$m$-$F$-rational} if
\begin{enumerate}
\item $\mathrm{codim}_{X}\,\mathrm{Sing}(X) > 2m+1$,
\item $C \colon Z\Omega^{[i]}_{X} \to \Omega^{[i]}_{X}$ is surjective for all $i\leq m$, and\\[-0.9em]
\item for every closed point $\m \in X$ and every effective Cartier divisor $D$ on $X$, there exists an  integer $n_0>0$ such that the composition
\[
H^j_\m(\Omega^{[i]}_{X}) \xrightarrow{C_n^{-1}} H^j_\m\left(\frac{F^n_{*}\Omega^{[i]}_{X}}{B_n\Omega^{[i]}_{X}}\right) \xrightarrow{\mathrm{nat}} H^j_\m\left(\frac{F^n_{*}\Omega^{[i]}_{X}(D)}{B_n\Omega^{[i]}_{X}}\right)
\]
is injective for all $i\leq m$, $j \leq d - i$, and all $n \geq n_0$.
\end{enumerate}
\end{definition}
\noindent A review of Cartier operators may be found in Subsection \ref{ss:Cartier}. When Assumption (1) is dropped, we call $X$ \emph{pre-$m$-$F$-rational}. We refer to Example \ref{example2} and Example \ref{example3} for examples of $m$-$F$-rational singularities. 

Next, we provide equivalent definitions of higher $F$-rationality (cf.\ Corollary \ref{cor:correctDef}). 

\begin{theoremA} \label{thm:IndependenceIntro}
Fix an integer $m>0$ and let $X$ be a normal affine $d$-dimensional variety over a perfect field of  characteristic $p>0$. Assume that $C \colon Z\Omega^{[i]}_{X} \to \Omega^{[i]}_{X}$ is surjective for all $i\leq m$. Then the following statements are equivalent:
\begin{enumerate}
\item $X$ is pre-$m$-$F$-rational,
\item there exists a Cartier divisor $D \geq 0$ on $X$ containing ${\rm Sing}(X)$ such that for every closed point $\m \in X$ and every $r>0$ there exists $n_0>0$ such that the composition
\[
H^j_\m(\Omega^{[i]}_{X}) \xrightarrow{C_n^{-1}} H^j_\m\left(\frac{F^n_{*}\Omega^{[i]}_{X}}{B_n\Omega^{[i]}_{X}}\right) \xrightarrow{\mathrm{nat}} H^j_\m\left(\frac{F^n_{*}\Omega^{[i]}_{X}(rD)}{B_n\Omega^{[i]}_{X}}\right)
\]
is injective when $0 \leq i\leq m$, $i+j \leq d$, and $n \geq n_0$.
\item $X$ is $F$-rational and for every closed point $\m \in X$ there exists an integer $n_0>0$ such that the composition
\[
H^j_\m(\Omega^{[i]}_{X}) \xrightarrow{C_n^{-1}} H^j_\m\left(\frac{F^n_{*}\Omega^{[i]}_{X}}{B_n\Omega^{[i]}_{X}}\right) \xrightarrow{\delta} H^{j+1}_\m(B_n\Omega^{[i]}_X) 
\]
is injective when $0 < i \leq m$,  $i+j \leq d$, and $n \geq n_0$.
\item $X$ is $F$-rational and for every closed point $\m \in X$ there exists an integer $n_0>0$ such that the map:
\[
C_n \colon H^j_\m(Z_n\Omega^{[i]}_{X}) \to   H^j_\m(\Omega^{[i]}_{X}) 
\]
is zero when $0 < i \leq m$,  $i+j \leq d$, and $n \geq n_0$.
\end{enumerate}
\end{theoremA}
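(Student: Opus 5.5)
The plan is to prove the cycle $(1)\Rightarrow(2)\Rightarrow(3)\Rightarrow(4)\Rightarrow(3)\Rightarrow(1)$. Throughout I use the two short exact sequences
\[
0 \to B_n\Omega^{[i]}_X \to Z_n\Omega^{[i]}_X \xrightarrow{C_n} \Omega^{[i]}_X \to 0, \qquad 0 \to B_n\Omega^{[i]}_X \to F^n_*\Omega^{[i]}_X(rD) \to \frac{F^n_*\Omega^{[i]}_X(rD)}{B_n\Omega^{[i]}_X} \to 0
\]
for $r\geq 0$. Here $C_n^{-1}$ identifies $\Omega^{[i]}_X$ with $Z_n\Omega^{[i]}_X/B_n\Omega^{[i]}_X$; this uses the surjectivity hypothesis on $C$ and the review in Subsection \ref{ss:Cartier}. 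For $i=0$ we have $B_n=0$. In that case each of the conditions reduces to the standard characterisation of $F$-rationality: injectivity of $H^j_\m(\cO_X)\to H^j_\m(F^n_*\cO_X(rD))$ for a divisor $D$ containing $\mathrm{Sing}(X)$, or for all $D$. I will quote this, so the work is for $i>0$.

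\textbf{The implication $(1)\Rightarrow(2)$ is trivial.} Take any Cartier $D\geq 0$ containing $\mathrm{Sing}(X)$ and apply (1) to $rD$.

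\textbf{Formal steps.} For $(3)\Leftrightarrow(4)$, restrict the second sequence (with $r=0$) to $Z_n\Omega^{[i]}_X\subseteq F^n_*\Omega^{[i]}_X$. By naturality of connecting maps, $\delta\circ C_n^{-1}$ equals the connecting map $H^j_\m(\Omega^{[i]}_X)\to H^{j+1}_\m(B_n\Omega^{[i]}_X)$ of the first sequence. By the long exact sequence, that map is injective if and only if $C_n\colon H^j_\m(Z_n\Omega^{[i]}_X)\to H^j_\m(\Omega^{[i]}_X)$ is zero.

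For $(3)\Rightarrow(1)$, let $D'$ be an arbitrary effective Cartier divisor, and let $\delta_{D'}$ be the connecting map of the second sequence with $D'$ in place of $rD$. Naturality gives $\delta_{D'}\circ \mathrm{nat} = \delta$, because the sub-object $B_n\Omega^{[i]}_X$ is the same in both sequences. Hence if $\delta\circ C_n^{-1}$ is injective, then so is $\mathrm{nat}\circ C_n^{-1}$. Condition (3) also contains $F$-rationality, which gives the case $i=0$.

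\textbf{Main step: $(2)\Rightarrow(3)$.} Fix $D$ as in (2), with local equation $f$ near $\m$; $F$-rationality follows from the case $i=0$. Let $x\in H^j_\m(\Omega^{[i]}_X)$ with $\delta C_n^{-1}x=0$. Then $C_n^{-1}x$ lifts to some $y\in H^j_\m(F^n_*\Omega^{[i]}_X)$, so $\mathrm{nat}(C_n^{-1}x)$ is the image of $y$ under $H^j_\m(F^n_*\Omega^{[i]}_X)\to H^j_\m(F^n_*\Omega^{[i]}_X(rD))$.

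As a map of abelian groups, this map is multiplication by $f^r$ on $H^j_\m(\Omega^{[i]}_X)$. I claim $H^j_\m(\Omega^{[i]}_X)$ is killed by a fixed power $f^N$ whenever $j<d$. By local duality (after completing at $\m$), it is Matlis dual to $\Ext^{d-j}(\Omega^{[i]}_X,\omega_X)$. This module is finitely generated and, since $d-j>0$ and $\Omega^{[i]}_X$ is locally free on the regular locus, it is supported in $\mathrm{Sing}(X)\subseteq \Supp D$. Hence it is annihilated by some $f^N$, and so is its dual.

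Since $i>0$ forces $j\leq d-i<d$, taking $r=N$ makes the image of $y$ vanish. Injectivity in (2) for $r=N$ and $n\geq n_0(N)$ then forces $x=0$. This is exactly the $n_0$ required in (3).

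I expect this annihilation step to be the main obstacle. One must check carefully that $\mathrm{nat}$ really corresponds to multiplication by $f^r$ under the $F^n_*$-identification: it is multiplication by $f^r$ on the source module, which is what matters. One must also check that local duality applies on the affine, non-local $X$ after localising and completing at $\m$; local cohomology is insensitive to completion, so this should be harmless.
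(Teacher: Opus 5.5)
Your proposal is correct and follows essentially the same route as the paper. Both arguments share each step: the trivial (1)$\Rightarrow$(2); for (2)$\Rightarrow$(3), the Matlis-duality observation that a power of the local equation of $D$ kills $H^j_\m(\Omega^{[i]}_X)$ for $j<d$, followed by the same diagram chase; for (3)$\Rightarrow$(1), the factorisation $\delta=\delta_D\circ\mathrm{nat}$; and for (3)$\Leftrightarrow$(4), the comparison of $0\to B_n\Omega^{[i]}_X\to Z_n\Omega^{[i]}_X\to\Omega^{[i]}_X\to 0$ with $0\to B_n\Omega^{[i]}_X\to F^n_*\Omega^{[i]}_X\to F^n_*\Omega^{[i]}_X/B_n\Omega^{[i]}_X\to 0$. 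Two minor differences: your (3)$\Rightarrow$(1) is slightly cleaner because it does not enlarge $D$, and you state explicitly the $i=0$ reduction to $F$-rationality in (2)$\Rightarrow$(3), which the paper leaves implicit.
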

\noindent The map $\delta$ in (3) of the above theorem comes from the long exact sequence:
\[
\ldots \to H^j_\m(B_n\Omega^{[i]}_{X}) \to H^j_\m(F^n_{*}\Omega^{[i]}_{X}) \to H^j_\m\left(\frac{F^n_{*}\Omega^{[i]}_{X}}{B_n\Omega^{[i]}_{X}}\right) \xrightarrow{\delta} H^{j+1}_\m(B_n\Omega^{[i]}_X) \to \ldots
\]
induced by $0 \to B_n\Omega^{[i]}_X \to F^n_{*}\Omega^{[i]}_{X} \to \frac{F^n_{*}\Omega^{[i]}_{X}}{B_n\Omega^{[i]}_{X}} \to 0$. We also note that $m$-$F$-rationality can be equivalently defined through the vanishing of $H^j_\m(Z_n\Omega^{[i]}_{X})$ (see Remark \ref{remark:ZZeroDef}). \\

A key property of $m$-rational singularities is that they impose depth conditions on reflexive differential forms, namely, $H^j_\m(\Omega^{[i]}_X) = 0$ for $i \leq m$ and $i+j < \dim X$. We establish the same result for $m$-$F$-rational singularities.

\begin{theoremA}[{Theorem \ref{thm:vanishing of local cohomologies}}] \label{thm:vanishingIntro}
Fix an integer $m \geq 0$ and let $X$ be a normal affine variety with $m$-$F$-rational singularities over a perfect field of characteristic $p>0$. Then
\begin{equation} \label{eq:vanishingmFrational}
H^j_{\m}(\Omega^{[i]}_X)=0 
\end{equation}
for all closed points $\m \in X$ when $i\leq m$ and $i+j<\dim X$.   
\end{theoremA}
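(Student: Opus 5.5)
We argue by induction on $i$, using the condition on $C_n$ from Theorem \ref{thm:IndependenceIntro}(4) together with the exact sequences relating $Z_n$, $B_n$ and $\Omega^{[i]}$. Fix a closed point $\m$ and put $d=\dim X$. For $i=0$ the claim is $H^j_\m(\mathcal{O}_X)=0$ for $j<d$. This holds because $X$ is $F$-rational, which is part of (3) and (4) of Theorem \ref{thm:IndependenceIntro}, and $F$-rational rings are Cohen--Macaulay.

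Now suppose the vanishing is known for all $i'<i\le m$. There are two ingredients.

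\emph{Step 1.} We show that $H^j_\m(B_n\Omega^{[i]}_X)$ vanishes in a suitable range. The differential gives the short exact sequence
\[
0\to F^n_*Z_n\Omega^{[i-1]}_X \to F^n_*\Omega^{[i-1]}_X\to B_n\Omega^{[i]}_X\to 0,
\]
at least on the regular locus. Since $\operatorname{codim}\mathrm{Sing}(X)>2m+1$ and all the sheaves involved are reflexive or $S_2$-type, we extend it by working on $U=X\setminus\mathrm{Sing}(X)$ and comparing local cohomology in low degrees. By induction, $H^j_\m(\Omega^{[i-1]}_X)=0$ for $j<d-i+1$. Consequently $H^{j}_\m(B_n\Omega^{[i]}_X)$ injects into $H^{j+1}_\m(Z_n\Omega^{[i-1]}_X)$ for $j+1<d-i+1$.

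\emph{Step 2.} We show that $H^{j}_\m(Z_n\Omega^{[i-1]}_X)$ vanishes in this range. The sequence
\[
0\to Z_{n+1}\Omega^{[i-1]}\to Z_n\Omega^{[i-1]}\xrightarrow{dC_n} B_1\Omega^{[i]}
\]
and the sequence $0\to B_n\to Z_n\xrightarrow{C_n}\Omega\to0$ (surjectivity of $C$ holds by assumption (2)) combine with the vanishing of $C_n$ on local cohomology from (4), applied at level $i-1$. Together with the inductive hypothesis, this shows that the $H^j_\m(Z_n\Omega^{[i-1]})$ vanish for large $n$ in the required range (cf.\ Remark \ref{remark:ZZeroDef}).

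\emph{Step 3.} Finally we obtain the vanishing at level $i$. Take $j<d-i$ and consider
\[
H^j_\m(Z_n\Omega^{[i]}_X)\xrightarrow{C_n} H^j_\m(\Omega^{[i]}_X)\to H^{j+1}_\m(B_n\Omega^{[i]}_X).
\]
The first map is zero by (4), so $H^j_\m(\Omega^{[i]}_X)$ injects into $H^{j+1}_\m(B_n\Omega^{[i]}_X)$. By Steps 1 and 2 the latter vanishes for $j+1<d-i+1$, i.e.\ $j<d-i$, which gives the theorem.

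We expect the main obstacle to be the index bookkeeping in Steps 1 and 2. Each step of the induction shifts the degree by one, so we must check that the range $i+j<d$ is exactly preserved. We must also make sure that the sequences defining $Z_n$ and $B_n$ for reflexive forms stay exact where they are needed. For that we plan to use the hypothesis $\operatorname{codim}\mathrm{Sing}(X)>2m+1$ to compare with pushforwards from the regular locus.
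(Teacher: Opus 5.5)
Your architecture matches the paper's: induct on $i$, prove $H^j_\m(B_n\Omega^{[i]}_X)=0$ for $i+j\le d$ from the data at level $i-1$, and conclude as in your Step 3, which is the paper's last step phrased via (4). However, Step 1 has a genuine gap.

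\textbf{The gap in Step 1.} Pushing forward from the regular locus gives only the left exact sequence $0\to Z\Omega^{[i-1]}_X\to F_*\Omega^{[i-1]}_X\to B\Omega^{[i]}_X$. Its cokernel is supported on $\mathrm{Sing}(X)$ and can have nonzero local cohomology in degrees $\le\dim\mathrm{Sing}(X)$, which lie well inside your range. So the injection $H^j_\m(B\Omega^{[i]}_X)\hookrightarrow H^{j+1}_\m(Z\Omega^{[i-1]}_X)$ does not follow. Reflexivity of the three sheaves does not help. Surjectivity is a depth statement about the image $\mathcal{C}\coloneqq F_*\Omega^{[i-1]}_X/Z\Omega^{[i-1]}_X$ along $\mathrm{Sing}(X)$. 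Proving it is the heart of the argument, and it is where the codimension hypothesis is used.

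\textbf{How the paper closes it.} In Proposition \ref{thm:mCMBvanishing}, $\mathcal{C}$ is the cokernel of $C^{-1}\colon\Omega^{[i-1]}_X\to F_*\Omega^{[i-1]}_X/B\Omega^{[i-1]}_X$. The paper shows $H^j_\m(\mathcal{C})=0$ for $j<d-i+1$, using three inputs: the inductive vanishing for $\Omega^{[i-1]}_X$, the inductive vanishing for $B\Omega^{[i-1]}_X$, and injectivity of $C^{-1}$ on $H^j_\m$ at level $n=1$. The last follows from the level-$n$ statement, since $C_n^{-1}$ factors through $C^{-1}$. Since $\operatorname{codim}\mathrm{Sing}(X)\ge 2m+2\ge i+2$, we have $d-i+1\ge\dim\mathrm{Sing}(X)+3$. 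Remark \ref{rem:Sk} then makes $\mathcal{C}$ $(S_3)$, hence $\mathcal{C}=B\Omega^{[i]}_X$.

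\textbf{The case $n\ge 2$.} Your sequence for $n\ge2$ is also not among the available ones. By \eqref{ZZB}, $F^n_*\Omega^{i-1}/Z_n\Omega^{i-1}$ is filtered by $B\Omega^i,F_*B\Omega^i,\dots,F^{n-1}_*B\Omega^i$ from the bottom. This is the reverse of the filtration of $B_n\Omega^i$ given by \eqref{eq:BBB}. The paper instead treats $n=1$ and climbs via the reflexivised \eqref{eq:BBB}. Its right exactness again uses that $B\Omega^{[i]}_X$ is $(S_3)$.

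\textbf{Step 2.} First, Remark \ref{remark:ZZeroDef} is deduced from the very theorem you are proving, so citing it is circular. Second, Step 3 needs $H^j_\m(B_n\Omega^{[i]}_X)=0$ for all $j\le d-i$. Via Step 1 this requires injectivity of $H^{d-i+1}_\m(Z_n\Omega^{[i-1]}_X)\to H^{d-i+1}_\m(F^n_*\Omega^{[i-1]}_X)$ in the top degree $(i-1)+(d-i+1)=d$.
\begin{itemize}
\item For $i=1$ the vanishing you claim is false there. We have $Z_n\Omega^{[0]}_X=\mathcal{O}_X$ and $H^d_\m(\mathcal{O}_X)\neq 0$, while condition (4) only concerns $i>0$. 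You must use instead the injectivity of $F^n$ on $H^d_\m(\mathcal{O}_X)$, which comes from $F$-rationality.
\item For $i-1\ge1$, (4) yields the top-degree vanishing of $H^{d-i+1}_\m(Z_n\Omega^{[i-1]}_X)$ only once $H^{d-i+1}_\m(B_n\Omega^{[i-1]}_X)=0$ is known. So your induction must carry the $B_n$-vanishing on the closed range $i+j\le d$, as Proposition \ref{thm:mCMBvanishing} does.
\end{itemize}
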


Our main result compares higher rational singularities in characteristic zero and higher $F$-rational singularities in positive characteristic. In contrast to higher $F$-injectivity we do not need to assume the ordinarity conjecture and the statement holds for all sufficiently large $p>0$ (cf.\ \cite[Theorem A]{Kawakami-Witaszek}). This theorem generalises the results on classical $F$-rationality from \cite{Hara98}, \cite{Mehta-Srinivas(rationalsingularities)}, and \cite{Smith(rational)}.
\begin{theoremA}[{Theorem \ref{thm:k-F-rational to k-rational} and Theorem \ref{thm:pre-k-F-rational type to pre-$k$-rational}}]
    Let $X$ be a normal variety over a field $K$ of characteristic zero. Fix $m\geq 0$ and a model $\mathcal X$ of $X$ over a finitely generated $\bZ$-subalgebra $A \subseteq K$.
    \begin{enumerate}
        \item Suppose that $X$ is $m$-rational. Then there exists a Zariski-open set of closed points $S \subseteq \Spec A$ such that $X_s$ is $m$-$F$-rational for all $s \in S$.
        \item The converse also holds true: if $X_s$ is $m$-$F$-rational for a Zariski-dense set of closed points  $S \subseteq \Spec A$, then $X$ is $m$-rational.
    \end{enumerate}
\end{theoremA}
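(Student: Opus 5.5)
We will treat the two directions separately, both through the characterisation of $m$-rationality in characteristic zero (Friedman--Laza, Musta\c{t}\u{a}--Popa, Shen--Venkatesh--Vo): a normal $X$ is $m$-rational if and only if $\codim_X \mathrm{Sing}(X) > 2m+1$ (which forces rationality for $m\ge 1$) and, for a log resolution $f\colon Y \to X$ with reduced exceptional divisor $E$, the natural maps $\Omega^{[i]}_X \to Rf_*\Omega^i_Y(\log E)(-E)$ are quasi-isomorphisms for $i \le m$. Equivalently, we may use the vanishing $R^jf_*\Omega^i_Y(\log E)(-E)=0$ for $j>0$ together with $f_*\Omega^i_Y(\log E)(-E)=\Omega^{[i]}_X$. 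The codimension condition spreads out in both directions without difficulty, so the work lies in comparing the cohomological conditions with condition (3) of the definition, or with the equivalent formulation (4) of Theorem \ref{thm:IndependenceIntro}.

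\textbf{From $m$-rational to $m$-$F$-rational.} We spread out $f$, $E$ and the quasi-isomorphisms above. For large $p$ we then have Deligne--Illusie type decompositions and logarithmic Cartier isomorphisms on $(Y_s,E_s)$. Surjectivity of $C\colon Z\Omega^{[i]}_{X_s}\to \Omega^{[i]}_{X_s}$ follows from the logarithmic extension result: reflexive forms extend to $\Omega^i_Y(\log E)$, on which the log Cartier operator is surjective from closed forms, and we push forward. For injectivity we use local duality. By Grothendieck duality the dual of $H^j_\m(\Omega^{[i]}_X)$ is $\mathcal{H}^{-j}$ of $Rf_*\Omega^{d-i}_Y(\log E)$, and the composite in (3) dualises to a trace-type map. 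After twisting by $D$ this map should become surjective for $n\gg0$. The argument mirrors Hara's proof of $F$-rationality: we choose $D$ with $f^*D$ supported on an ample-minus-effective configuration, then apply Frobenius-amplitude vanishing (Serre vanishing after iterating Frobenius) on $Y$, using the filtration of $F^n_*\Omega^i/B_n$ by the graded pieces coming from $C^{-1}$. We expect this to be the \textbf{main obstacle}: relating $F^n_*\Omega^{[i]}_X(D)/B_n$ on $X$ to the logarithmic complexes on $Y$ requires the extension theorem for $B_n$ and $Z_n$ forms, which we must control uniformly in $n$. The plan is to induct on $n$ using the exact sequences $0\to B_1\to B_{n}\xrightarrow{C} B_{n-1}\to 0$, and to reduce to the case $n=1$, where log Deligne--Illusie applies for $p>\dim X$.

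\textbf{From $m$-$F$-rational to $m$-rational.} Here we use Theorem \ref{thm:vanishingIntro}: $H^j_\m(\Omega^{[i]}_{X_s})=0$ for $i\le m$ and $i+j<d$, for a dense set of $s$. We also use condition (4), the vanishing of $C_n$ on $H^j_\m(Z_n\Omega^{[i]})$. Since $F$-rationality on a dense set gives rationality (Smith, Hara), we argue by induction on $i$. Assume $X$ is $(i-1)$-rational. We show that the cone of $\Omega^{[i]}_X\to Rf_*\Omega^i_Y(\log E)(-E)$ vanishes, or dually that $R^jf_*\Omega^{d-i}_Y(\log E)\to$ its counterpart on $X$ is an isomorphism. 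Reducing mod $p$, the vanishing of $C_n$ from (4) forces Frobenius-stable pieces of the local cohomology of $Rf_*$ to die. The standard argument is that the image of $H^j_\m(Rf_*\Omega^i_Y(\log E)(-E))$ is Frobenius-stable via $C^{-1}$, which is injective on $Y$. It is nilpotent by (4), hence zero, and then we lift the resulting equality back to characteristic zero by generic flatness.
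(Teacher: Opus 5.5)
There is a genuine gap. The characterisation of $m$-rationality on which both halves rest has the twist by $-E$ on the wrong side.

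\textbf{The characterisation.} There is no natural map $\Omega^{[i]}_X\to Rf_*\Omega^i_Y(\log E)(-E)$; the inclusion $f_*\Omega^i_Y(\log E)(-E)\subseteq\Omega^{[i]}_X$ goes the other way. For $i=0$ your condition reads $f_*\cO_Y(-E)=\cO_X$, which fails for every singular $X$, since the left-hand side is the ideal of $f(E)$. Also, the codimension bound alone does not force rationality. The correct statement, for a strong log resolution and $m<\codim_X{\rm Sing}(X)$, is $Rf_*\Omega^i_Y(\log E)\simeq\Omega^{[i]}_X$ for $i\le m$ (Proposition \ref{prop:characterization of pre-k-rational} together with Kebekus--Schnell). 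The $-E$ belongs to the Grothendieck dual $Rf_*\Omega^{d-i}_Y(\log E)(-E)$, so your duality identification is also off, and what you propose to spread out or lift is false.

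\textbf{First half, after correcting the characterisation.} Two holes remain. First, pushing forward $C\colon Z\Omega^i_Y(\log E)\to\Omega^i_Y(\log E)$ does not give surjectivity on $X_s$. You need $R^1\pi_*B\Omega^i_Y(\log E)=0$, which comes from $R^{>0}\pi_*\Omega^{i-1}_Y(\log E)=0$ (Lemma \ref{lemma:Kawakami-vanishing}). Second, for injectivity the paper avoids the twist by $D$ entirely. It uses formulation (3) of Theorem \ref{thm:IndependenceIntro} and compares with the twist by $-A$, for a $\pi$-ample exceptional $A$ with $\rup{A}=0$. Via Lemma \ref{lem:dual} this reduces to surjectivity of $C_n\colon R^j\pi_*Z_n\Omega^i_Y(\log E)(p^nA)\to R^j\pi_*\Omega^i_Y(\log E)(A)$ for $i+j\ge d$ (Lemma \ref{lem:CSurj}). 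That needs $R^{d-i}\pi_*\Omega^i_Y(\log E)(p^lA)=0$ for \emph{every} $l\ge 1$, because the filtration of $B_n$ involves all twists $p^lA$ with $1\le l\le n$. Serre vanishing at a fixed $p$ only gives $l\gg0$. The fix is to choose the Serre threshold $n_0$ in characteristic zero and then restrict to $p>n_0$. The terms with $i+j>d$ are handled by Akizuki--Nakano for $W_2$-liftable pairs, which is where $p>d$ enters.

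\textbf{Second half.} This direction is missing its essential mechanism. Reduction mod $p$ only yields injectivity statements on local cohomology. Converting them into $m$-rationality requires the characteristic-zero criterion of Theorem \ref{thm:characterization of pre-k-rational}: $X$ is pre-$m$-rational if and only if $H^j_\m(\Omega^i_{X,h})\to H^j_\m(R\pi_*\Omega^i_Y(\log E))$ is injective for $i\le m$ and $i+j\le d$. Injectivity is transported from $X_s$ to $X$ by Corollary \ref{cor:reduction(local-coh)-Omega}.

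To obtain that injectivity on $X_s$, the paper inducts on $m$:
\begin{enumerate}
\item Pre-$(m-1)$-rationality gives $R^{>0}\pi_*\Omega^i_Y(\log E)=0$ for $i\le m-1$.
\item Hence $R\pi_*B_n\Omega^m_Y(\log E)=B_n\Omega^{[m]}_X$, by Theorem \ref{thm:LET} and Lemma \ref{lemma:Kawakami-vanishing}(2).
\item So the injective composite $\delta\circ C_n^{-1}$ of formulation (3) factors through $\pi^*\colon H^j_\m(\Omega^{[m]}_X)\to H^j_\m(R\pi_*\Omega^m_Y(\log E))$, which is therefore injective (Theorem \ref{thm:main-component-to-char-0}).
\end{enumerate}

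Your ``Frobenius-stable image, nilpotent by (4), hence zero'' step does not supply this. Condition (4) concerns $C_n$ on $H^j_\m(Z_n\Omega^{[i]}_X)$, not an action on images of cohomology coming from $Y$. The vanishing of such an image yields neither the needed injectivity nor, without the criterion above, a quasi-isomorphism. Theorem \ref{thm:vanishingIntro} is not needed in this direction.
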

 For $K=\bQ$, the theorem implies that
\[
\text{ $X$ is $m$-rational }\!\! \iff\!\! \text{ $X \bmod p$ is $m$-$F$-rational for all (equivalently, some) $p \gg 0$.}
\]
\vspace{-0.5em}

A key result about rational singularities $X$ in characteristic $0$ is that they satisfy the \emph{extension theorem}: $\pi_*\Omega^i_Y = \Omega^{[i]}_X$ for $\pi \colon Y \to X$ being a resolution of singularities (see Kebekus--Schnell's \cite{KS21}). Unfortunately, the extension theorem breaks in characteristic $p>0$ even for basic toric $A_p$-surface singularities (see \cite{Langer19,Gra}). 

On the other hand, one can ask if the logarithmic variant of extension theorem holds for $F$-singularities. 
In \cite{Kaw4}, the first author have showed that $F$-rational singularities with ${\rm codim}\, {\rm Sing}(X) \geq 3$ satisfy the logarithmic extension theorem for $1$-forms. We extend this result to  $m$-forms when $X$ has $(m-1)$-$F$-rational singularities.

\begin{theoremA}
    Let $X$ be a normal variety over a perfect field of positive characteristic with $(m-1)$-$F$-rational singularities for $m\geq 2$.
    Then 
    \[
    f_{*}\Omega^{[m]}_Y(\log E)\cong \Omega^{[m]}_X
    \]
    for all proper birational morphisms $f\colon Y\to X$ with $Y$ normal and $E$ being the reduced exceptional divisor.
\end{theoremA}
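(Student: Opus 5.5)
We plan to reduce the statement to a local cohomology vanishing on $X$ and then apply Theorem~\ref{thm:vanishingIntro} with $m-1$ in place of $m$. The question is local, so we assume $X$ is affine. Let $U \subseteq X$ be the open subset over which $f$ is an isomorphism, and set $Z = X \setminus U$. Since $X$ is normal, $\codim Z \geq 2$. Moreover, the codimension condition in the definition of $(m-1)$-$F$-rationality gives $\codim \mathrm{Sing}(X) > 2m-1$, so $\codim \mathrm{Sing}(X) \geq 2m$.

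\textbf{Reduction to forms in low codimension.} The sheaf $f_*\Omega^{[m]}_Y(\log E)$ is torsion-free and agrees with $\Omega^{[m]}_X$ on $U$. Hence it embeds into the reflexive sheaf $\Omega^{[m]}_X$. Conversely, $\Omega^{[m]}_X \to f_*\Omega^{[m]}_Y(\log E)$ holds exactly when every reflexive $m$-form on $X$ pulls back to a log form with at most log poles along $E$. This is a divisorial condition on $Y$, so it suffices to check it at the generic point of each exceptional prime divisor. By replacing $Y$ with a suitable normal model, we may assume each $E_l$ dominates its centre $W_l \subseteq X$. Since $\Omega^{[m]}_X$ is locally free on the smooth locus, and log extension holds for smooth $X$ (pullbacks of regular forms are regular), we reduce to exceptional divisors whose centres lie in $\mathrm{Sing}(X)$. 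Localising at the generic point of such a centre, we may assume $X$ is local of dimension $d \geq 2m$ with isolated singularity at $\m$, and that $f$ is an isomorphism outside $\m$.

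\textbf{Key step.} Following the method of \cite{Kaw4} for $1$-forms, we will use the residue/log-Cartier approach. Consider the exact sequence
\[
0 \to f_*\Omega^{m}_Y(\log E) \to \Omega^{[m]}_X \to H^1_\m(\ldots),
\]
or, more precisely, compare $H^0(U,\Omega^m_U)$ with log forms. A form $\omega \in H^0(U,\Omega^{m}_U)$ with a pole of order $\geq 2$ along some $E_l$ should be killed by iterating the Cartier operator. Log forms are stable under $C$, and for a form with a higher-order pole, $C^n$ reduces the pole order, since $C(F^*\text{-pole}) $ divides the pole order by roughly $p$. The obstruction to lifting is measured by classes in $H^{j}_\m(\Omega^{[i]}_X)$ for $i \leq m-1$ and small $j$. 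The mechanism is as follows. Writing $\omega$ locally near a general point of $E_l$ as $dt/t^k \wedge \alpha + \beta/t^k$, the non-log part is governed, through $d$ and the exact sequences $0\to Z_n \to F^n_*\Omega^{i} \xrightarrow{d} B_{n}\Omega^{i+1}\to 0$, by $(m-1)$-forms and lower. The relevant cohomology groups are $H^j_\m(\Omega^{[i]}_X)$ with $i \leq m-1$ and $i+j \leq m < d$. By Theorem~\ref{thm:vanishingIntro} applied to $m-1$, and by the vanishing of $H^j_\m(Z_n\Omega^{[i]}_X)$ (Remark~\ref{remark:ZZeroDef}, Theorem~\ref{thm:IndependenceIntro}(4)), these groups vanish. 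Running the sequences $0\to Z_{n}\Omega^{i}\to F_*Z_{n-1}\Omega^i \to B\Omega^{i+1}$ and $0 \to B_n\Omega^{i+1} \to Z_n\Omega^{i+1}\xrightarrow{C_n}\Omega^{i+1}\to 0$ inductively in $i$ gives $H^j_\m(B_n\Omega^{[i]})=0$ in the required range. We then deduce that $\Omega^{[m]}_X$ satisfies the Cartier-stable log extension property, following \cite{Kaw4}.

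\textbf{Main obstacle.} The hardest part is the translation in the key step: showing that a reflexive form with a worse-than-log pole along $E$ produces a nonzero class in some $H^j_\m(B_n\Omega^{[i]}_X)$ or $H^j_\m(Z_n\Omega^{[i]}_X)$ with $i<m$ in the vanishing range. We would first try the route of \cite{Kaw4}. Since $C$ preserves log forms on $Y$ and $C^n\colon Z_n\to \Omega$ is surjective on $X$ by condition (2), we write $\omega = C^n(\eta)$ with $\eta \in Z_n\Omega^{[m]}_X$. After pulling back, $\eta$ has poles of order at most $p^n$ times those of $\omega$. The difference between $\eta$ and a log lift is then exact modulo $B_n$, and it is controlled by $H^1_\m$-type groups of $B_n\Omega^{[m]}$. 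This reduces further, via $d$, to $Z_n\Omega^{[m-1]}$, where the vanishing above applies.
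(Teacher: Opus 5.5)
There is a genuine gap, and it sits exactly at the step you flag as the main obstacle. There you write $\omega=C^n(\eta)$ with $\eta\in Z_n\Omega^{[m]}_X$, and you justify surjectivity of $C_n\colon Z_n\Omega^{[m]}_X\to\Omega^{[m]}_X$ by ``condition (2)''. But $(m-1)$-$F$-rationality only gives surjectivity of $C\colon Z\Omega^{[i]}_X\to\Omega^{[i]}_X$ for $i\le m-1$. Surjectivity in degree $m$ is not a hypothesis; it is the whole content of the statement. As soon as $C\colon Z\Omega^{[m]}_X\to\Omega^{[m]}_X$ is surjective, Theorem~\ref{thm:LET}(1) (i.e.\ \cite[Theorem A]{Kaw4}) gives $f_*\Omega^{[m]}_Y(\log E)\cong\Omega^{[m]}_X$ for every proper birational $f$ at once, and no pole-order analysis is needed. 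The rest of your key step (``should be killed by iterating $C$'', ``is governed by'', ``controlled by $H^1_\m$-type groups'') is heuristic rather than an argument. Moreover, $C$ is only defined on closed forms, so ``$C^n$ reduces the pole order of $\omega$'' does not make sense for a general $\omega$. Your reduction to an isolated singularity with $f$ an isomorphism off $\m$ is also unjustified: a centre may be a proper subvariety of a component of $\mathrm{Sing}(X)$, and after localising the residue field is no longer perfect. The correct argument does not need this reduction.

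Your inputs are the right ones, namely Theorem~\ref{thm:vanishing of local cohomologies} for $i\le m-1$ and vanishing of $H^j_\m(B\Omega^{[i]}_X)$. What is missing is how they yield surjectivity of $C$ in degree $m$. The paper does this in Theorem~\ref{thm:higherRationalGivesExtension}, applied with $m-1$ in place of $m$; this is allowed since $\codim \mathrm{Sing}(X)\ge 2m\ge (m-1)+3$. The argument runs as follows.
\begin{enumerate}
\item Let $\cC$ be the cokernel of $C^{-1}\colon\Omega^{[m-1]}_X\to F_*\Omega^{[m-1]}_X/B\Omega^{[m-1]}_X$.
\item One gets $H^j_\m(\cC)=0$ for all $j<d-m+1$ from three facts: $H^j_\m(\Omega^{[m-1]}_X)=0$ for $j<d-m+1$; $H^j_\m(B\Omega^{[m-1]}_X)=0$ for $j\le d-m+1$; and injectivity of $C^{-1}$ on $H^{d-m+1}_\m(\Omega^{[m-1]}_X)$ in the boundary degree.
\item By Remark~\ref{rem:Sk}, $\cC$ is $(S_3)$, hence reflexive, hence equal to $B\Omega^{[m]}_X$ (Proposition~\ref{thm:mCMBvanishing}).
\item Therefore $B\Omega^{[m]}_X$ has depth $\ge 3$ along $\mathrm{Sing}(X)$. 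The cokernel of $C\colon Z\Omega^{[m]}_X\to\Omega^{[m]}_X$ injects into $R^1j_*B\Omega^m_U\cong\mathcal H^2_{\mathrm{Sing}(X)}(B\Omega^{[m]}_X)$, where $j\colon U\hookrightarrow X$ is the smooth locus, so it vanishes.
\item Theorem~\ref{thm:LET} then finishes the proof.
\end{enumerate}
The identification $\cC=B\Omega^{[m]}_X$ via $(S_3)$ is also what your ``run the sequences $0\to Z_n\Omega^{i}\to F_*Z_{n-1}\Omega^{i}\to B\Omega^{i+1}$ inductively'' glosses over. On $X$ these reflexivised sequences are not right exact a priori.
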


\subsection{Acknowlegements}
The authors thank Bhargav Bhatt, Brad Dirks, Eamon Quinlan-Gallego, Mircea Musta{\c{t}}{\u{a}}, Mihnea Popa, Supravat Sarkar, Wanchun Shen, Karl Schwede,  Teppei Takamatsu, Sridhar Venkatesh, Duc Vo, Shou Yoshikawa, and Bogdan Zavyalov for valuable conversations related to the content of the paper.
Kawakami was supported by JSPS KAKENHI Grant number JP24K16897 and by Inamori Foundation.
Witaszek was supported by NSF research grants DMS-2101897 and DMS-2401360.

\section{Preliminaries}
\subsection{Notation and terminology}
In this paper, we use the following notation:
\begin{itemize}
\item A \textit{variety} is an integral separated scheme of finite type over a field. 
\item Given a proper birational morphism $f\colon Y\to X$, we denote by $\Exc(f)$ the reduced exceptional divisor. 
\item Given an integral normal Noetherian scheme $X$, a proper birational morphism $\pi \colon Y \to X$ is called \emph{a log resolution of $X$} if $Y$ is regular and $\mathrm{Exc}(\pi)$ is a simple normal crossing divisor. A log resolution is \emph{strong} if it is an isomorphism over the regular locus of $X$.
\item For a Noetherian scheme $X$, we denote by $D^b_{\mathrm{coh}}(X)$ the derived category consisting of bounded complexes of coherent $\cO_X$-modules.
\item For  $\mathcal{F}^{\bullet}\in D^b_{\mathrm{coh}}(X)$, we denote the canonical truncation of $\mathcal{F}^{\bullet}$ by $\tau^{>j}\mathcal{F}^{\bullet}$ (cf.\ \cite[\href{https://stacks.math.columbia.edu/tag/0118}{Tag 0118}]{stacks-project}). For a proper birational morphism $f\colon Y\to X$ of Noetherian schemes,
we denote $\tau^{>j}Rf_{*}\mathcal{F}$ by $R^{>j}f_{*}\mathcal{F}$.
\item 
Given an integral normal Noetherian scheme $X$ and a $\bQ$-divisor $D$, 
we define the subsheaf $\sO_X(D)$ of the sheaf $K(X)$ of rational functions on $X$ 
by the following formula
\[
\Gamma(U, \sO_X(D)) = 
\{ \varphi \in K(X) \mid 
\left({\rm div}(\varphi)+D\right)\!|_U \geq 0\}
\]
{for every} open subset $U$ of $X$. 
In particular, 
$\cO_X({D}) = \cO_X(\rdown{D})$. Moreover, we denote the fractional part of a $\Q$-divisor $D$ by $\{D\}\coloneqq D-\lfloor D \rfloor$.
\item We let $\DDB^i_X \in D^b_{\rm coh}(X)$ denote the $i$-th Du Bois complex of a variety $X$ over a field of characteristic $0$. When $X$ is smooth, one has $\DDB^i_X=\Omega^i_X[0]$. In general, it is given by
\[
\DDB^i_X \coloneqq R\epsilon_{\kdot,*}\Omega^i_{X_\kdot},
\]
where $\epsilon_\kdot \colon X_{\kdot}\to X$ is a hyperresolution (see \cite[Chapter~5]{GNPP} or \cite[Chapter~7.3]{Peter-Steenbrink(Book)} for details). Equivalently, $\DDB^i_X$ is the derived $h$-sheafification of $\Omega^i$ on the $h$-site over $X$. It is a well-known fact that $\cH^0(\DDB^i_X)$ is torsion-free (see \cite[Proposition 4.2 (i)]{Huber-Jorder}). In this paper, we denote $\cH^0(\DDB^i_X)$ by $\Omega^i_{X,h}$.
\item Let $X$ be a $d$-dimensional normal affine variety over a perfect field $k$ of characteristic $p>0$.
We say $X$ is \emph{$F$-rational} if for every closed point $\m \in X$ and every Cartier divisor $D\geq 0$ on $X$, there exists a positive integer $n_0>0$ such that the composition
\[
H^j_\m(\sO_{X}) \xrightarrow{F^n} H^j_\m\left(F^n_{*}\sO_X\right) \xrightarrow{\mathrm{nat}} H^j_\m\left(F^n_{*}\sO_{X}(D)\right)
\]
is injective for all $0\leq j\leq d$ and $n \geq n_0$ (cf.~\cite[Proposition 3.9]{Takagi-Watanabe}).
\end{itemize}

We start with an easy criterion for when a coherent sheaf is ($S_k$).
\begin{lemma} \label{lem:reflexive}
Let $X = \Spec R$ be a normal affine variety over a field $k$ and let $\cM$ be a coherent sheaf on $X$. Let $U \subseteq X$ be an open subset such that $\cM|_U$ is locally free and $Z \coloneqq X\smallsetminus U$ is of codimension at least two. Then $\cM$ is reflexive if and only if for every prime ideal $\p \in Z$ there exists a maximal ideal $\m \in \overline{\p} \coloneqq V(\p)$ such that
\begin{equation} \label{eq:vanrefl}
H^{\dim \overline{\p}}_\m(\cM) = H^{\dim \overline{\p}+1}_\m(\cM) = 0. 
\end{equation}
\end{lemma}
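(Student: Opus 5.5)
The plan is to reduce reflexivity to a depth condition at the points of $Z$, and then to move between local cohomology at $\p$ and local cohomology at closed points of $\overline{\p}$ using Grothendieck local duality with a normalized dualizing complex.

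\emph{Step 1 (reflexivity as a depth condition).} Since $R$ is normal, a coherent $\cM$ is reflexive if and only if it is torsion-free and satisfies $(S_2)$. Concretely, this means $\operatorname{depth}\cM_\p \ge \min(2,\dim R_\p)$ for every prime $\p$. For $\p \in U$ the module $\cM_\p$ is free over the normal local ring $R_\p$, so this inequality holds automatically because $R_\p$ is $(S_2)$. Every $\p \in Z$ has height at least two. Hence
\[
\cM \text{ is reflexive} \iff H^0_{\p R_\p}(\cM_\p)=H^1_{\p R_\p}(\cM_\p)=0 \text{ for all } \p\in Z.
\]
Note that the associated primes of $\cM$ lying in $Z$ are exactly the primes where the depth is $0$, so this condition also covers torsion-freeness.

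\emph{Step 2 (duality and localization).} Let $\omega^\bullet_X$ be the normalized dualizing complex of $X$, and set $\mathcal{E}^k \coloneqq \mathcal{E}xt^{-k}(\cM,\omega^\bullet_X)$, which is a coherent sheaf. By local duality, $H^k_\m(\cM)=0$ if and only if $(\mathcal{E}^k)_\m=0$ for a closed point $\m$. Since $X$ is a variety, we have $\dim R_\p+\dim\overline{\p}=\dim X$, and $(\omega^\bullet_X)_\p[-\dim\overline{\p}]$ is the normalized dualizing complex of $R_\p$. Local duality over $R_\p$ then gives
\[
H^i_{\p R_\p}(\cM_\p)=0 \iff (\mathcal{E}^{\,i+\dim\overline{\p}})_\p=0 .
\]
Getting this shift right is the only delicate point, and I expect it to be the main obstacle, albeit a routine one. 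It comes from the codimension function attached to $\omega^\bullet_X$, which on a variety is $\p\mapsto\dim\overline{\p}$.

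\emph{Step 3 (comparing $\p$ with closed points of $\overline{\p}$).} Fix $\p \in Z$ and put $k_0=\dim\overline{\p}$.

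For the direction ($\Leftarrow$), suppose $H^{k_0}_\m(\cM)=H^{k_0+1}_\m(\cM)=0$ for some closed point $\m\in\overline{\p}$. Then $(\mathcal{E}^{k_0})_\m=(\mathcal{E}^{k_0+1})_\m=0$. Since $\p$ is a generization of $\m$ and these sheaves are coherent, their stalks at $\p$ also vanish. By Step 2, $\operatorname{depth}\cM_\p\ge2$.

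For the direction ($\Rightarrow$), suppose $\cM$ is reflexive. Steps 1 and 2 give $(\mathcal{E}^{k_0})_\p=(\mathcal{E}^{k_0+1})_\p=0$. By coherence, both sheaves vanish on an open neighbourhood $V$ of $\p$. The set $V\cap\overline{\p}$ is a nonempty open subset of the variety $\overline{\p}$, so it contains a closed point $\m$. By local duality, $H^{k_0}_\m(\cM)=H^{k_0+1}_\m(\cM)=0$, which is exactly \eqref{eq:vanrefl}. Combined with Step 1, this proves the lemma.
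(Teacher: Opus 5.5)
Your proposal is correct and follows essentially the same route as the paper's proof. Both reduce reflexivity to $H^0_{\p}(\cM_\p)=H^1_{\p}(\cM_\p)=0$ for $\p\in Z$, then use local duality (with the shift by $\dim\overline{\p}$) to turn this into the vanishing at $\p$ of the stalks of $\mathcal{E}xt^{-r-\dim\overline{\p}}(\cM,\omega^\bullet_X)$ for $r=0,1$; they then pass between $\p$ and a closed point $\m\in\overline{\p}$ by coherence and apply local duality again at $\m$. Your handling of the shift is correct, and your $r=0,1$ is what is needed: the paper's text writes $r=0$ and $r=-1$, which is a sign slip.
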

\noindent Equivalently, one can ask for an existence of an open subset $U \subseteq X$ containing $\p$ and such that \eqref{eq:vanrefl} holds for every $\m \in U$.
\begin{proof}
The sheaf $\cM$ is reflexive if and only if
\[
H^0_\p(\cM_{\p}) = H^1_\p(\cM_{\p}) = 0
\]
for every prime ideal $\p \in Z$. By local duality, this is equivalent to checking that
\[
{\rm Ext}^{-r}(\cM_\p, \omega^\bullet_{\cO_{X,\p}}) = {\rm Ext}^{-r-\dim \overline{p}}(\cM, \omega^\bullet_{X})_\p 
\]
is zero for $r=0$ and $r=-1$. This is equivalent to verifying that there exists an open subset $U \subseteq X$ containing $\p$ such that
\[
{\rm Ext}^{-r-\dim \overline{p}}(\cM, \omega^\bullet_{X})|_U = 0,
\]
which is the same as finding a maximal ideal $\m \in \overline{p}$ such that
${\rm Ext}^{-r-\dim \overline{p}}(\cM, \omega^\bullet_{X})_\m=0.$
By local duality again, this translates to:
\[
H^{\dim \overline{\p}}_\m(\cM) = H^{\dim \overline{\p}+1}_\m(\cM) = 0
\]
as required. \qedhere
\end{proof}

\begin{remark}
    \label{rem:Sk}
The same argument as above shows the following statement.
Let $X = \Spec R$ be a normal affine variety over a field and let $\cM$ be a coherent sheaf on $X$. 
Let $Z\subseteq X$ be a closed subset such that $\mathcal{M}|_{X\setminus Z}$ is locally-free and let $k \geq 0$ be an integer such that
\[
H^i_\m(\cM) = 0 
\]
for all closed points $\m \in Z$ and $i < \dim\,Z+k$. Then $\cM$ satisfies ($S_{k}$).
\end{remark}

In this paper, we will also repeatedly use the following lemma.

\begin{lemma}[{\cite[Lemma 4.1]{Hara98}}]\label{lem:Hara's lemma}
   Let $X$ be a Noetherian separated scheme of finite type over a Noetherian ring $A$ and let $f\colon X\to \Spec\,A$ be a natural projection. Let $\mathcal{F}$ be a quasi-coherent sheaf on $X$ such that $R^jf_{*}\mathcal{F}$ is flat over $A$ for every $j\geq 0$.
   Then the natural map
   \[
   R^jf_{*}\mathcal{F}\otimes_A k(s)\xrightarrow{\cong} R^jf_{s,*}\mathcal{F}_s
   \]
   is an isomorphism for every point $s\in\Spec A$ and $j\geq 0$, where $k(s)$ is the residue field of $s\in\Spec A$, $X_s=X\times_A \Spec k(s)$, and $\mathcal{F}_s$ is the induced sheaf on $X_s$.
\end{lemma}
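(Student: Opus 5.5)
The plan is to compute both sides with Čech complexes and then compare ordinary and derived tensor products. Choose a finite affine open cover $\mathfrak U=\{U_1,\dots,U_r\}$ of $X$; this is possible since $X$ is of finite type over the Noetherian ring $A$. Because $X$ is separated, every finite intersection $U_{i_0\cdots i_q}$ is affine, so the Čech complex $C^\bullet=\check C^\bullet(\mathfrak U,\mathcal F)$ is a bounded complex of $A$-modules with $H^j(C^\bullet)=R^jf_*\mathcal F$. The base change $\mathfrak U_s=\{U_i\times_A k(s)\}$ is an affine cover of $X_s$ with affine intersections. For a quasi-coherent sheaf on an affine scheme, taking sections commutes with $-\otimes_A k(s)$, so $\check C^\bullet(\mathfrak U_s,\mathcal F_s)=C^\bullet\otimes_A k(s)$. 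Hence the desired map is identified with the natural map
\[
H^j(C^\bullet)\otimes_A k(s)\longrightarrow H^j(C^\bullet\otimes_A k(s)).
\]

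The argument then splits into two steps. First, I will show that $H^j(C^\bullet\otimes^L_A k(s))\cong H^j(C^\bullet)\otimes_A k(s)$ for every $j$. The Künneth (hyper-Tor) spectral sequence
\[
E_2^{p,q}=\mathrm{Tor}^A_{-p}(H^q(C^\bullet),k(s))\Rightarrow H^{p+q}(C^\bullet\otimes^L_A k(s))
\]
has $E_2^{p,q}=0$ for $p\neq 0$, because every $H^q(C^\bullet)=R^qf_*\mathcal F$ is flat over $A$ by hypothesis. So it degenerates and gives exactly this isomorphism. Second, I will identify $C^\bullet\otimes^L_A k(s)$ with $C^\bullet\otimes_A k(s)$. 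This holds as soon as every term $C^i$ is $\mathrm{Tor}$-independent of $k(s)$, in particular when each $C^i$ is $A$-flat.

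The main obstacle is this second step, and it genuinely uses more than the stated hypothesis. Flatness of the cohomology modules alone does not force $C^\bullet\otimes_A k(s)$ to compute the derived tensor product. For example, with $A=k[t]$ and $s=(t)$, the exact complex $A\xrightarrow{t}A\to A/t$ has zero, hence flat, cohomology, but after $\otimes_A k(s)$ it becomes $k\xrightarrow{0}k\xrightarrow{\sim}k$, which has cohomology $k$ in the first degree.

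What I would try first is to check whether this can actually happen for a Čech complex of a quasi-coherent sheaf. If $X$ is affine there is nothing to check: then $C^\bullet=\Gamma(X,\mathcal F)$ sits in degree $0$, and it is flat because it equals $R^0f_*\mathcal F$. In general I do not see how to avoid the issue. So I expect to need the additional hypothesis that $\mathcal F$ is flat over $A$, which is implicit in \cite[Lemma 4.1]{Hara98}. Then every $C^i$, being a product of modules of sections of $\mathcal F$ over affine opens, is $A$-flat, the second step holds, and combining the two steps proves the lemma.

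In the intended applications the sheaf is a coherent sheaf on a spreading-out $\mathcal X\to\Spec A$, such as the reflexive differential forms. There this hypothesis can always be arranged by shrinking $\Spec A$ using generic flatness. I will state the lemma with this flatness assumption made explicit.
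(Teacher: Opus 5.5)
Your argument is correct: the \v{C}ech-complex computation, the K\"unneth degeneration from flatness of the cohomology, and the replacement of $\otimes^L$ by $\otimes$ once the terms $C^i$ are flat together form the standard proof of this base-change statement. The paper gives no proof of its own, only the citation \cite[Lemma 4.1]{Hara98}.

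Your caution about the second step is also justified. The hypothesis that $\mathcal{F}$ be flat over $A$ is genuinely necessary, and the lemma as printed is false without it. Here is a counterexample.

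\textbf{Setup.} Let $A=k[t]$, $X=\mathbb{P}^1_A$, and let $i\colon X_0=\mathbb{P}^1_k\hookrightarrow X$ be the fibre over $s=(t)$. Let $\mathcal{F}$ be the fibre product of two maps: the restriction $\mathcal{O}_X\to i_*\mathcal{O}_{X_0}$, and the pushed-forward Euler surjection $i_*\mathcal{O}_{X_0}(-1)^{\oplus 2}\to i_*\mathcal{O}_{X_0}$ given by $(x_0,x_1)$. Then $\mathcal{F}$ is coherent and there is an exact sequence $0\to i_*\mathcal{O}_{X_0}(-2)\to\mathcal{F}\to\mathcal{O}_X\to 0$.

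\textbf{The fibre.} The second projection $\mathcal{F}\to i_*\mathcal{O}_{X_0}(-1)^{\oplus 2}$ is surjective with kernel $t\mathcal{O}_X\times 0=t\mathcal{F}$. Hence $\mathcal{F}_s=\mathcal{F}/t\mathcal{F}\cong\mathcal{O}_{X_0}(-1)^{\oplus 2}$, and so $H^0(X_0,\mathcal{F}_s)=0$.

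\textbf{The total space.} A global section of $\mathcal{F}$ is a pair $(g,0)$ with $g\in A$ and $g|_{X_0}=0$. So $H^0(X,\mathcal{F})=tA\cong A$. Consequently the connecting map $A\to H^1(X_0,\mathcal{O}_{X_0}(-2))=k$ has kernel $tA$, hence is onto, and therefore $H^1(X,\mathcal{F})=0$.

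\textbf{Conclusion.} Every $R^jf_*\mathcal{F}$ is flat, yet
\[
R^0f_*\mathcal{F}\otimes_A k(s)\cong k\neq 0=H^0(X_0,\mathcal{F}_s).
\]
Without flatness of $\mathcal{F}$, your first step still proves the statement, but for the derived fibre $\mathcal{F}\otimes^L_A k(s)$ rather than for $\mathcal{F}_s$.

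Stating the lemma with $\mathcal{F}$ flat over $A$, as you propose, is therefore the right fix. It costs nothing in this paper. Every application is to $\Omega^i_{Y_A}(\log E_A)$ or its twists $\Omega^i_{Y_A}(\log E_A)(nD_A)$. These are locally free on $Y_A$, which after shrinking $\Spec A$ is smooth, and in particular flat, over $A$.
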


\subsection{Higher rational singularities}
 In this subsection, we recall the definition and basic properties of higher rational singularities.
\begin{definition}[{\cite{SVV}}]
    Let $X$ be a normal variety over an algebraically closed field of characteristic zero.
    We say that $X$ is \emph{pre-$m$-rational} if 
    \[
    R^{>0}\cHom(\underline{\Omega}^{d-i}_X,\omega^{\bullet}_X[-d])=0
    \]
    for all $i\leq m$.
    Moreover, we say that $X$ is \emph{$m$-rational} if it is pre-$m$-rational and $\mathrm{codim}_{X}\, {\rm Sing}(X) > 2m+1$.
\end{definition}

\begin{proposition}\label{prop:characterization of pre-k-rational}
    Let $X$ be a normal variety over an algebraically closed field of characteristic zero.
    Suppose that $\codim_X {\rm Sing}(X)>m$.
    Then $X$ is pre-$m$-rational if and only if 
    \[
    R^{>0}\pi_{*}\Omega^{i}_Y(\log E)=0
    \]
    for every $i\leq m$ and every (equivalently, some) strong log resolution $\pi\colon Y\to X$ with reduced exceptional divisor $E$.
\end{proposition}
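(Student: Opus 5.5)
Two standard inputs drive the argument. The first is the description of the Du Bois complex via a strong log resolution: for a strong log resolution $\pi\colon Y\to X$ with reduced exceptional divisor $E$ and $\Sigma \coloneqq {\rm Sing}(X)$, there is the exact triangle
\[
R\pi_*\Omega^i_Y(\log E)(-E)\to \DDB^i_X\to \DDB^i_\Sigma\xrightarrow{+1}.
\]
Since $R\pi_*\Omega^i_Y(\log E)(-E)\cong R\pi_*\Omega^i_{Y}(\log E)(-E)$ is the Du Bois complex of the pair $(X,\Sigma)$, this triangle holds independently of the chosen resolution. The second is Grothendieck duality for $\pi$ combined with the perfect pairing $\Omega^i_Y(\log E)\otimes \Omega^{d-i}_Y(\log E)(-E)\to \omega_Y$. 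Together they give
\[
R\cHom(R\pi_*\Omega^{d-i}_Y(\log E)(-E),\omega^\bullet_X[-d]) \cong R\pi_*\Omega^{i}_Y(\log E).
\]

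First I apply $R\cHom(-,\omega^\bullet_X[-d])$ to the triangle with $i$ replaced by $d-i$, which yields
\[
\mathbb{D}(\DDB^{d-i}_\Sigma)\to \mathbb{D}(\DDB^{d-i}_X)\to R\pi_*\Omega^i_Y(\log E)\xrightarrow{+1},
\]
where $\mathbb{D}(-)\coloneqq R\cHom(-,\omega^\bullet_X[-d])$. The claim then reduces to showing that $\mathbb{D}(\DDB^{d-i}_\Sigma)$ contributes nothing in degrees $\geq 0$, i.e.\ $\mathcal{H}^q\mathbb{D}(\DDB^{d-i}_\Sigma)=0$ for $q\geq 0$, and possibly also for $q=0$ and $1$, so that $\mathcal{H}^{q}$ of the middle and right terms agree for $q>0$ and the connecting map does not interfere.

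For this vanishing, set $s=\dim \Sigma$; the hypothesis gives $s<d-m\leq d-i$. Since $\DDB^{d-i}_\Sigma=0$ whenever $d-i>s$, which holds here, the complex $\DDB^{d-i}_\Sigma$ is zero for all $i\leq m$. Hence $\mathbb{D}(\DDB^{d-i}_X)\cong R\pi_*\Omega^i_Y(\log E)$ for $i\le m$, and taking $R^{>0}$ of both sides gives the equivalence. Independence of the resolution follows because the left-hand side is intrinsic to $X$; this also settles the ``every (equivalently, some)'' clause.

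The main obstacle is justifying the triangle and the vanishing $\DDB^{j}_\Sigma=0$ for $j>\dim\Sigma$ in the generality of the paper, which only requires an algebraically closed field of characteristic zero. I would cite the standard facts from \cite{GNPP}/\cite{Peter-Steenbrink(Book)}: the triangle comes from the cdh/h-descent square for the blow-up $E\to\Sigma$, combined with $\DDB^i_E$ being the cone of $\Omega^i_Y(\log E)(-E)\to\Omega^i_Y$; the vanishing holds by the dimension bound on hyperresolutions. These facts make the intended point transparent: the codimension hypothesis $\codim\Sigma>m$ is exactly what kills $\DDB^{d-i}_\Sigma$, and this is why it is needed.
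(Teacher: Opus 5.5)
Your proposal is correct: the triangle $R\pi_*\Omega^{d-i}_Y(\log E)(-E)\to\DDB^{d-i}_X\to\DDB^{d-i}_\Sigma\xrightarrow{+1}$, the vanishing $\DDB^{d-i}_\Sigma=0$ because $d-i\geq d-m>\dim\Sigma$, and Grothendieck duality give $R\cHom(\DDB^{d-i}_X,\omega^\bullet_X[-d])\cong R\pi_*\Omega^i_Y(\log E)$ for every $i\leq m$ and every strong log resolution. This yields both the equivalence and the ``every/some'' clause; your hedging about which degrees of $\mathbb{D}(\DDB^{d-i}_\Sigma)$ must vanish is unnecessary, since that complex is zero. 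The paper gives no argument of its own here and only cites \cite{SVV} and \cite[Proposition 2.26]{Kawakami-Witaszek(ch=0)}; your computation is, to my knowledge, the standard argument behind those references.
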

\begin{proof}
    This has been established in \cite{SVV} (see also \cite[Proposition 2.26]{Kawakami-Witaszek(ch=0)}).
\end{proof}

\noindent The following characterisation of higher rationality is one of the key results of \cite{Kawakami-Witaszek(ch=0)}.

\begin{theorem}\label{thm:characterization of pre-k-rational}
    Let $X$ be a normal connected variety of dimension $d$ over an algebraically closed field of characteristic zero and let $\pi \colon Y \to X$ be a log resolution with reduced exceptional divisor $E$.
    Then $X$ is pre-$m$-rational if and only if 
    \[
    H_{\m}^j(\Omega^i_{X,h})\to H_{\m}^j(R\pi_{*}\Omega^i_Y(\log E))
    \]
    is injective for all $i, j \geq 0$ such that $i\leq m$ and $i + j\leq d$.
\end{theorem}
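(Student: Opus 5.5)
We will prove Theorem \ref{thm:characterization of pre-k-rational} by local duality, turning the injectivity of maps on local cohomology into the vanishing of higher cohomology sheaves of the Grothendieck dual of $\DDB^{i}_X$. The key input is the standard identification of the dual of the log forms on a log resolution with the Du Bois complex (in the form of \cite{Kawakami-Witaszek(ch=0)} and \cite{SVV}):
\[
R\pi_*\Omega^{d-i}_Y(\log E)(-E)\simeq \DDB^{d-i}_X \quad\text{(for $\pi$ a log resolution)},
\]
and its Grothendieck dual
\[
R\cHom(\DDB^{d-i}_X,\omega_X^\bullet[-d])\simeq R\pi_*\Omega^{i}_Y(\log E).
\]
This uses $\omega_Y\otimes\Omega^{d-i}_Y(\log E)(-E)^\vee\cong\Omega^i_Y(\log E)$. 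Thus pre-$m$-rationality says exactly that $R\pi_*\Omega^i_Y(\log E)$ is a sheaf in degree $0$ for $i\le m$.

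The plan is to connect this sheaf to $\Omega^i_{X,h}$. First, the natural map $\DDB^i_X\to R\pi_*\Omega^i_Y(\log E)$ factors through $\mathcal H^0$. Since $\Omega^i_{X,h}$ is torsion free and $\pi_*\Omega^i_Y(\log E)$ is torsion free and generically equal to it, $\Omega^i_{X,h}\to\pi_*\Omega^i_Y(\log E)$ is injective. Let $\mathcal Q$ be its cokernel and $\mathcal C=\mathrm{Cone}(\Omega^i_{X,h}\to R\pi_*\Omega^i_Y(\log E))$, so that $\mathcal H^0(\mathcal C)=\mathcal Q$ and $\mathcal H^k(\mathcal C)=R^k\pi_*\Omega^i_Y(\log E)$ for $k>0$. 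The long exact sequence of local cohomology shows that the stated injectivity for all $j$ with $i+j\le d$ is equivalent to the surjectivity of
\[
H^{j-1}_\m(\Omega^i_{X,h})\to H^{j-1}_\m(R\pi_*\Omega^i_Y(\log E))
\]
for $j-1<d-i$, together with injectivity at $j=d-i$. These conditions are in turn controlled by the vanishing of $H^{j}_\m(\mathcal C)$ for $j<d-i$.

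For the direction ($\Rightarrow$), we use that for pre-$m$-rational $X$ the higher direct images vanish, so $\mathcal C=\mathcal Q$. We then show that $\mathcal Q$ has support of dimension less than $d-i$, or more precisely that $H^j_\m(\mathcal Q)=0$ for $j<d-i$. We expect this to follow by comparing with the dual statement: dualising $\Omega^i_{X,h}\to\pi_*\Omega^i_Y(\log E)$ gives $\DDB^{d-i}_X\to R\cHom(\Omega^i_{X,h},\omega^\bullet_X[-d])$. The known inequality $\dim\Supp R^k\pi_*\le d-k-1$ type bounds (Steenbrink vanishing, $\mathcal H^k\DDB^{d-i}_X$ has support of dimension $<d-k$ or so) then translate via local duality into the required local cohomology vanishing.

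For the direction ($\Leftarrow$), we argue by induction on the dimension of the support. We localise at a generic point of $\Supp R^{>0}\pi_*\Omega^i_Y(\log E)$ of dimension $s$, so that the higher direct images are finite length at the closed point after cutting down. Injectivity for all $j\le d-i$ then forces $H^j_\m$ of the cone to vanish in the relevant range, and the spectral sequence $H^p_\m(\mathcal H^q)$ isolates $H^0_\m(R^q\pi_*)$. Here we need $q+i\le d$, which holds by Steenbrink-type vanishing: $R^q\pi_*\Omega^i_Y(\log E)=0$ for $i+q>d$.

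The main obstacle is the precise dimension bookkeeping. Specifically, we must show that the $\mathcal H^0$-cokernel $\mathcal Q$ and the higher direct images contribute to local cohomology only in the prescribed range $i+j\le d$. Passing to general hyperplane sections to reduce to isolated supports, and using local duality as in Lemma \ref{lem:reflexive}, is what we would try first.
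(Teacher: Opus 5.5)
The main problem is that the proof rests on a false identification, and once it is corrected the hard direction ($\Leftarrow$) still has a genuine gap.

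\textbf{The identification.} You use $R\cHom(\DDB^{d-i}_X,\omega^\bullet_X[-d])\simeq R\pi_*\Omega^i_Y(\log E)$ throughout, and it is false in the generality of the statement. For a log resolution with $Z\coloneqq\pi(E)$, the complex $R\pi_*\Omega^{p}_Y(\log E)(-E)$ is the Du Bois complex of the pair $(X,Z)$, not $\DDB^p_X$. Put $K_i\coloneqq R\cHom(\DDB^{d-i}_X,\omega^\bullet_X[-d])$. Dualising $\DDB^{d-i}_{X,Z}\to\DDB^{d-i}_X\to\DDB^{d-i}_Z\xrightarrow{+1}$ only gives an exact triangle
\[
R\cHom(\DDB^{d-i}_Z,\omega^\bullet_X[-d])\to K_i\to R\pi_*\Omega^i_Y(\log E)\xrightarrow{+1}.
\]
So $K_i\simeq R\pi_*\Omega^i_Y(\log E)$ only when $d-i>\dim Z$. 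The theorem has no such hypothesis, unlike Proposition \ref{prop:characterization of pre-k-rational}.

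Consequently ``pre-$m$-rational $\iff R^{>0}\pi_*\Omega^i_Y(\log E)=0$'' fails. Take a rational isolated singularity of dimension $d\ge 2$. Then $K_d=\omega_X$, yet $R^{d-1}\pi_*\omega_Y(E)\neq 0$: after completion it is Matlis dual to $H^1_E(Y,\cO_Y(-E))$, which contains $\cO_X/\m$. A surface quotient singularity is therefore pre-$2$-rational with $R^1\pi_*\Omega^2_Y(\log E)\neq 0$. The same example refutes your ``Steenbrink-type vanishing'' $R^q\pi_*\Omega^i_Y(\log E)=0$ for $i+q>d$; Steenbrink vanishing concerns $\Omega^i_Y(\log E)(-E)$.

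\textbf{The correct replacement.} This is what the range $i+j\le d$ is built for. $H^j_\m$ of the first term of the triangle is dual to $\mathcal H^{d-j}(\DDB^{d-i}_Z)$. That sheaf vanishes for $i+j\le d+1$ because $\dim Z\le d-2$. Hence $H^j_\m(K_i)\cong H^j_\m(R\pi_*\Omega^i_Y(\log E))$ for $i+j\le d$.

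With this, ($\Rightarrow$) needs none of your depth estimates on $\mathcal Q$:
\begin{itemize}
\item Pre-$m$-rational implies rational (the case $i=0$, as $\DDB^d_X\simeq R\pi_*\omega_Y$).
\item Rationality gives $\Omega^i_{X,h}=\pi_*\Omega^i_Y(\log E)=\Omega^{[i]}_X$ (Kebekus--Schnell, cf.\ \cite[Proposition 2.29]{Kawakami-Witaszek(ch=0)}), so $\mathcal Q=0$.
\item $\mathcal H^0(K_i)=\cHom(\Omega^{d-i}_{X,h},\omega_X)=\Omega^{[i]}_X$, so $K_i\simeq\Omega^{[i]}_X$.
\end{itemize}
The map in question is then an isomorphism for $i+j\le d$.

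\textbf{The gap in ($\Leftarrow$).} Injectivity at $j$ only says that $H^{j-1}_\m(R\pi_*\Omega^i_Y(\log E))\to H^{j-1}_\m(\mathcal C)$ is onto, not that $H^{j-1}_\m(\mathcal C)=0$. To get vanishing you also need $H^k_\m(K_i)=0$ for $k<d-i$, which holds since it is dual to $\mathcal H^{d-k}(\DDB^{d-i}_X)=0$. The object to kill is $\tau^{>0}K_i$, not $R^{>0}\pi_*\Omega^i_Y(\log E)$, which need not vanish.

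Even then, once $\tau^{>0}K_i$ is localised to be supported at a closed point, you get $\mathcal H^k(K_i)=0$ only for $1\le k\le d-i-1$. Injectivity at $j=d-i$ controls degree $d-i-1$, so your ``$q+i\le d$'' is off by one. The degrees $d-i\le k\le d-1$ are untouched, and you dispose of them only through the false vanishing above.

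These degrees carry real content. Let $X$ be normal of dimension $d\ge 2$ with rational singularities. The hypothesis for $i=d-1$ is vacuous, since $H^0_\m$ and $H^1_\m$ of the reflexive sheaf $\Omega^{[d-1]}_X=\Omega^{d-1}_{X,h}$ vanish. So the theorem contains the implication ``pre-$(d-2)$-rational $\Rightarrow$ pre-$(d-1)$-rational''. For surfaces this says every rational surface singularity is pre-$1$-rational, i.e.\ $\mathcal H^1(\DDB^1_X)=R^1\pi_*\Omega^1_Y(\log E)(-E)=0$. That lies exactly on the boundary $p+q=d$ of Steenbrink vanishing. It is a genuinely Hodge-theoretic fact: it converts the $\mathrm{Gr}^0_F$-information contained in rationality into $\mathrm{Gr}^1_F$-information.

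Nothing in your duality, spectral-sequence and support bookkeeping produces this, so ($\Leftarrow$) is missing an essential input in the top degrees. The paper itself does not prove the statement; it quotes it from \cite[Corollary 4.10]{Kawakami-Witaszek(ch=0)}.
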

\begin{proof}
    See \cite[Corollary 4.10]{Kawakami-Witaszek(ch=0)}.
\end{proof}

\subsection{Cartier operators} \label{ss:Cartier}
Let $Y$ be a smooth variety and let $E$ be a reduced divisor on $Y$ with snc support.
Let $A$ be a $\Q$-divisor whose support of the fractional part $\{A \} \coloneqq A-\lfloor A\rfloor$ is contained in $E$. 

Consider the complex:
\[
F_*\Omega_Y^\bullet(\log E)(pA) \coloneqq F_*\left(\Omega_Y^{\bullet}(\log E)\otimes \sO_Y(\lfloor pA \rfloor)\right).
\]
We define $\sO_Y$-modules $B\Omega_Y^i(\log E)(pA)$ and $Z\Omega_Y^i(\log E)(pA)$ to be the boundaries and the cycles of  this complex at $i$: 
\begin{align} 
\label{eq:definition-of-B1} B \Omega_Y^i(\log E)(pA) &\coloneqq {\rm im}\left(F_*\Omega_Y^{i-1}(\log E)(pA) \xrightarrow{F_*d}  F_*\Omega_Y^{i}(\log E)(pA)\right),\\ 
Z\Omega_Y^i(\log E)(pA) &\coloneqq {\rm ker}\left(F_*\Omega_Y^{i}(\log E)(pA) \xrightarrow{F_*d}  F_*\Omega_Y^{i+1}(\log E)(pA)\right) \nonumber,
\end{align}
We emphasise that, in general
\begin{align*}
    F_*\Omega_Y^i(\log E)(pA)&\not \cong F_*\Omega_Y^i(\log E)\otimes\sO_Y(pA), \\
    B \Omega_Y^i(\log E)(pA)&\not \cong B \Omega_Y^i(\log E)\otimes\sO_Y(pA),\\
    Z\Omega_Y^i(\log E)(pA)&\not \cong Z\Omega_Y^i(\log E)\otimes\sO_Y(pA).
\end{align*}
By definition, we obtain the short exact sequence
\begin{equation}\label{eq:ZOmegaB,single}
    0\to Z\Omega_Y^i(\log E)(pA) \to F_{*}\Omega_Y^i(\log E)(pA) \xrightarrow{F_*d} B\Omega_Y^{i+1}(\log E)(pA) \to 0.
\end{equation}
Moreover, the Cartier isomorphism (\cite[Lemma 3.3]{Hara98}, \cite[equation (5.4.2)]{KTTWYY1}) gives the short exact sequence
\begin{equation} \label{eq:BZOmega,single}
0 \to B\Omega^i_Y(\log E)(pA) \to Z\Omega^i_Y(\log E)(pA) \xrightarrow{C} \Omega^i_Y(\log E)(A) \to 0,
\end{equation}
which allows for the construction of the inverse Cartier operator
\begin{align}
\label{eq:dual-Cartier-Hara} C^{-1}\colon \Omega^i_Y(\log E)(A)&\underset{\cong}{\xleftarrow{\,  C\, }} \frac{Z\Omega^{i}_Y(\log E)(pA)}{B\Omega^{i}_Y(\log E)(pA)} \\[0.2em]
&\xhookrightarrow{\hphantom{\, C\, }} \frac{F_*\Omega^{i}_Y(\log E)(pA)}{B\Omega^{i}_Y(\log E)(pA)} \nonumber 
 \end{align}
 
\subsubsection{Iterations}
We refer to \cite[Section 5.2]{KTTWYY1} for a more detailed discussion on higher Cartier operators. 
Inductively on $n$, one constructs locally free $\sO_Y$-submodules \[
B_n\Omega_Y^i(\log E)(p^nA)\quad\text{and}\quad
Z_n\Omega_Y^i(\log E)(p^nA) 
\] 
of $F^n_* \Omega^i_Y(\log E)(p^nA)$ with the following short exact sequences of $\sO_Y$-modules:\\[-0.5em]
{ \small 
\makeatletter
\renewcommand{\mintagsep}{0em}
\renewcommand{\minalignsep}{0em}
\makeatother
\thinmuskip=0.55mu
\medmuskip=1.1mu
\thickmuskip=1.65mu
\begin{align}
&0\to Z_n\Omega^i_Y(\log E)(p^{n}A) \to F_{*}Z_{n{-}1}\Omega^i_Y(\log E)(p^nA) \xrightarrow{(\star)} B\Omega^{i{+}1}_Y(\log E)(pA) \to 0,\label{ZZB}\\[0.6em]
&0 \to B_{n}\Omega_Y^i(\log E)(p^nA) \to Z_{n}\Omega_Y^i(\log E)(p^nA) \xrightarrow{C_{n}}  \Omega_Y^i(\log E)(A) \to 0\label{BZOmega},\\[0.6em]
&0\to {F^{n{-}1}_{*}}\hspace{-0.2em}B\Omega^i_Y(\log E)(p^{n}\hspace{-0.2em}A) \to Z_{n}\Omega^i_Y(\log E)(p^{n}\hspace{-0.2em}A) \xrightarrow{C} Z_{n{-}1}\Omega^i_Y(\log E)(p^{n{-}1}\hspace{-0.2em}A) \to 0,\label{BZZ}\\[0.6em]
&0\to {F^{n{-}1}_{*}}\hspace{-0.25em}B\Omega_Y^i(\log E)(p^{n}\hspace{-0.2em}A) \to B_{n}\Omega_Y^i(\log E)(p^{n}\hspace{-0.2em}A) \xrightarrow{C} B_{n{-}1}\Omega_Y^i(\log E)({p^{n{-}1}}\hspace{-0.25em}A) \to 0,\label{eq:BBB} \\[-1em] \nonumber
\end{align}}
\!\!for all $n \geq 1$ where $(\star)$ denotes the map $F_{*}d\circ F_{*}C_{n-1}$. We refer to \cite[Subsection 5.2]{KTTWYY1} (see also \cite[Section 3]{Kaw7}) for the construction of these short exact sequences.  
Similarly to \eqref{eq:BBB}, we also have the short exact sequence
{ \small 
\makeatletter
\renewcommand{\mintagsep}{0em}
\renewcommand{\minalignsep}{0em}
\makeatother
\thinmuskip=0.55mu
\medmuskip=1.1mu
\thickmuskip=1.65mu\begin{equation}\label{eq:BBB2}
  0\to F_{*}B_{n-1}\Omega_Y^i(\log E)(p^{n}A) \to B_{n}\Omega_Y^i(\log E)(p^{n}A) \xrightarrow{C_{n-1}} B\Omega_Y^i(\log E)(pA) \to 0  
\end{equation}
}

When $n=1$, equations \eqref{ZZB} and \eqref{BZOmega} are  nothing but \eqref{eq:ZOmegaB,single} and \eqref{eq:BZOmega,single}, respectively, where implicitly:
\begin{align*}
    Z_0\Omega_Y^i(\log E)(A)&\coloneqq \Omega_Y^i(\log E)(A)\\
    Z_1 \Omega_Y^i(\log E)(pA)&\coloneqq Z\Omega_Y^i(\log E)(pA),\\
    B_0\Omega_Y^i(\log E)(A)&\coloneqq 0,\\
    B_1 \Omega_Y^i(\log E)(pA)&\coloneqq B\Omega_Y^i(\log E)(pA).
\end{align*}
Note that 
\[
\frac{F^n_*\Omega^{i}_Y(\log E)(p^nA)}{B_n\Omega^{i}_Y(\log E)(p^nA)}
\]
is locally free for all $n$ (see the paragraph immediately after \cite[equation (3.0.20)]{Kaw7}).
We also have the iterated inverse Cartier operator
\begin{align}
\label{eq:dual-Cartier-Hara-higher} 
C^{-1}_{n}\colon \Omega^i_Y(\log E)(A)&\underset{\cong}{\xleftarrow{\,C_{n}\,}} \frac{Z_n\Omega^{i}_Y(\log E)(p^nA)}{B_n\Omega^{i}_Y(\log E)(p^nA)} \\[0.2em]
&\xhookrightarrow{\hphantom{\,C_{n}\, }} \frac{F^n_*\Omega^{i}_Y(\log E)(p^nA)}{B_n\Omega^{i}_Y(\log E)(p^nA)} \nonumber  \end{align}

\subsubsection{Reflexive Cartier operators}
In what follows, we assume that $X$ is a normal variety over a perfect field $k$ of positive characteristic, and let $j\colon U\hookrightarrow X$ be the inclusion of the smooth locus.
We set 
\[
B_n \Omega_X^{[i]} \coloneqq j_{*}B_{n}\Omega_U^{i}\hspace{5mm}\text{and}\hspace{5mm} Z_n\Omega_X^{[i]} \coloneqq j_{*}Z_{n}\Omega_U^{i},
\]
which are reflexive $\sO_X$-modules.
By pushing forward the short exact sequence \eqref{BZOmega}
\[
0 \to B_{n}\Omega^i_U \to Z_{n}\Omega^i_U \xrightarrow{C_{n}}  \Omega^{i}_U \to 0, 
\]
we obtain an exact sequence
\[
0 \to B_{n}\Omega^{[i]}_X \to Z_{n}\Omega^{[i]}_X \xrightarrow{C_{n}}  \Omega^{[i]}_X.
\]
The surjectivity of $C\colon Z\Omega^{[i]}_X\to \Omega^{[i]}_X$ is related to the logarithmic extension property of differential forms as indicated by the following result (see \cite{Kaw4} for details).

\begin{theorem}\label{thm:LET}
    Let $X$ be a normal variety over a perfect field $k$ of positive characteristic and
    let $f\colon Y\to X$ be a proper birational morphism with reduced exceptional divisor $E$.
    Fix an integer $m\geq 0$.
    If 
    \[
    C \colon Z\Omega^{[m]}_X \to  \Omega^{[m]}_X.
    \]
    is surjective, then the following statements hold:
    \begin{enumerate}
        \item $f_{*}\Omega^{[m]}_Y(\log E)\cong \Omega^{[m]}_X$, and \\[-0.7em]
        \item $f_{*}B_n\Omega^{
        [m]}_Y(\log E)\cong B_n\Omega^{[m]}_X$ for all $n\geq 0$.
\end{enumerate}
\end{theorem}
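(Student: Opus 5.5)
Both statements are local on $X$, so we may assume $X$ is affine. Let $j\colon U\hookrightarrow X$ be the smooth locus. Since $f$ is birational and proper, there is an open $V\subseteq U$ with $\codim_X(X\smallsetminus V)\geq 2$ over which $f$ is an isomorphism. For (1) we then have injections
\[
\Omega^{[m]}_X\supseteq f_*\Omega^{[m]}_Y(\log E)\hookrightarrow j_*\Omega^m_V=\Omega^{[m]}_X,
\]
and the task is to show that every reflexive $m$-form $\omega$ on $X$ pulls back to a log form on $Y$. The plan is to use the surjectivity of $C$ to write $\omega$ as an iterated Cartier image of closed forms, and to exploit the fact that inverse Cartier images acquire poles of order at most one along $E$.

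\textbf{Proof of (1).} We may replace $Y$ by its regular locus. Indeed, $\Omega^{[m]}_Y(\log E)$ is reflexive, so it suffices to work at the generic points of the components of $E$. This reduces us to $Y$ regular and $E$ a regular (hence snc) divisor near the generic points $\eta$ of the components $E_\ell$. Fix $\omega\in\Omega^{[m]}_X$, and regard $f^*\omega$ as a rational $m$-form on $Y$, with pole order $a_\ell$ along $E_\ell$. Because $C\colon Z\Omega^{[m]}_X\to\Omega^{[m]}_X$ is surjective, we can iterate: for each $n$ there exists $\omega_n\in Z_n\Omega^{[m]}_X$ (on $X$, a form on $U$ extended reflexively) with $C_n(\omega_n)=\omega$. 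Here $\omega_n$ is a genuine regular section of $F^n_*\Omega^{[m]}_X$, and its pole order along $E_\ell$ is some bounded $b$.

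The key local computation is at $\eta$, with local coordinates $t,x_2,\dots$ where $E=(t=0)$. The Cartier operator maps $F_*\Omega^m_Y(\log E)(p^nA)\cap Z$ to $\Omega^m_Y(\log E)(A)$. Equivalently, if $\omega_n$ has pole order $\leq b$ then
\[
C_n(\omega_n)\in \Omega^m_{Y,\eta}(\log E)\bigl(\rdown{b/p^n}\,E\bigr)
\]
(use the sequence \eqref{BZOmega} with $A=\tfrac{b}{p^n}E$, noting that a form with pole $\leq b$ is a log form twisted by $bE$). Since $b$ can be bounded independently of $n$ — in fact, the pole order of a pullback of a regular form on $X$ is $\leq 0$ in the log sense once one controls $\omega_n$ — taking $n\gg 0$ gives $\rdown{b/p^n}=0$, so $f^*\omega$ is logarithmic at $\eta$.

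\textbf{Main obstacle.} The main difficulty is precisely this uniformity of $b$. The forms $\omega_n$ are regular on $X$, and pullbacks of regular forms of $X$ have pole order bounded by a constant $c$ that depends only on $f$ and on the generators of $\Omega^{[m]}_X$ as a finite module. So $b\leq c$ for all $n$, and the argument closes.

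\textbf{Proof of (2).} This follows the same pattern, using the filtrations \eqref{eq:BBB} and \eqref{eq:BBB2} and induction on $n$. We have
\[
B_n\Omega^{[m]}_X \supseteq f_*B_n\Omega^m_Y(\log E).
\]
A section of $B_n\Omega^{[m]}_X$ lies in $F^n_*\Omega^{[m]}_X$, hence by (1) it is a log form on $Y$. It is then a boundary-type element generically, and the saturation property of $B_n\subseteq F^n_*\Omega(\log E)$ finishes the argument: the quotient is locally free, so a log form that lies in $B_n$ generically lies in $B_n$.
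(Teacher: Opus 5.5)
Your proposal is correct. Part (2) is the paper's argument. Part (1) takes a different route, in the sense that the paper gives no argument there: it only quotes \cite[Theorem A]{Kaw4}, while you supply a direct proof.

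For (2), the paper pushes forward $0\to B_n\Omega^{[m]}_Y(\log E)\to F^n_*\Omega^{[m]}_Y(\log E)\to Q\to 0$ and uses (1) to identify the middle term with $F^n_*\Omega^{[m]}_X$. It then uses that $f_*Q$ is torsion-free to see that $f_*B_n\Omega^{[m]}_Y(\log E)$ is saturated in a reflexive sheaf, hence reflexive, hence equal to $B_n\Omega^{[m]}_X$. Your section-wise version is the same idea: a log form lying generically in $B_n$ lies in $B_n$ because the quotient is torsion-free. Two small corrections:
\begin{itemize}
\item The references to \eqref{eq:BBB}, \eqref{eq:BBB2} and induction on $n$ play no role and can be dropped.
\item ``Locally free'' should read ``torsion-free'' off the snc locus, or you should argue at codimension-one points, as you did in (1).
\end{itemize}

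For (1), your argument runs as follows.
\begin{itemize}
\item Iterate the surjectivity of $C$ on sections over the affine $X$ to write $\omega=C_n(\omega_n)$. This uses that the kernel $B\Omega^{[m]}_X$ is quasi-coherent.
\item Every section of the finitely generated module $\Omega^{[m]}_X$ pulls back with pole order at most a fixed $c$ along each $E_\ell$. This holds uniformly in $n$, since $\omega_n$ is just a section of $\Omega^{[m]}_X$.
\item $C_n$ sends $Z_n\Omega^m_Y(\log E)(cE)$ into $\Omega^m_Y(\log E)(\lfloor c/p^n\rfloor E)$, which is the untwisted log sheaf once $p^n>c$.
\end{itemize}
This makes the result self-contained. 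It also shows transparently why exactly logarithmic poles survive and why only surjectivity of $C$ in degree $m$ is used.

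In a write-up, make the following explicit:
\begin{itemize}
\item $f^*C_n(\omega_n)=C_n(f^*\omega_n)$, because both sides are computed by the Cartier operator on rational forms of $K(X)=K(Y)$.
\item $f^*\omega_n$ lies in $Z_n\Omega^m_Y(\log E)(cE)$ near $\eta$. By induction on \eqref{ZZB}, $Z_n\Omega^m_Y(\log E)(p^nA)$ is exactly the set of rational $Z_n$-forms lying in $F^n_*\Omega^m_Y(\log E)(p^nA)$.
\item Drop the parenthetical claim that the pole order is ``$\leq 0$ in the log sense once one controls $\omega_n$'', which sounds circular. The real input is the uniform bound $b\leq c$ from your last paragraph.
\end{itemize}
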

\begin{proof}
    By \cite[Theorem A]{Kaw4}, we have (1).
    Consider the short exact sequence
    \[
    0\to B_n\Omega^{[m]}_Y(\log E)\to F^n_{*}\Omega^{[m]}_Y(\log E) \to \frac{F^n_{*}\Omega^{[m]}_Y(\log E)}{B_n\Omega^{[m]}_Y(\log E)}\to 0.
    \]
    By pushing forward by $f$ and using (1), we get an exact sequence
    \[
    0\to f_{*}B_n\Omega^{[m]}_Y(\log E)\to F^n_{*}\Omega^{[m]}_X \to f_{*}\left(\frac{F^n_{*}\Omega^{[m]}_Y(\log E)}{B_n\Omega^{[m]}_Y(\log E)}\right).
    \]
    Since $\Omega^{[m]}_Y(\log E)$ and $B\Omega^{[m]}_Y(\log E)$ are reflexive, the sheaf
    \[
    \frac{F^n_{*}\Omega^{[m]}_Y(\log E)}{B_n\Omega^{[m]}_Y(\log E)} \quad \text{ and so also } \quad f_{*}\left(\frac{F^n_{*}\Omega^{[m]}_Y(\log E)}{B_n\Omega^{[m]}_Y(\log E)}\right) 
    \]
    are torsion-free. 
    This shows that $f_{*}B_n\Omega^m_Y(\log E)$ is reflexive.
\end{proof}

Suppose that $C \colon Z\Omega^{[i]}_{X} \to \Omega^{[i]}_{X}$ is surjective.
Then $C_n \colon Z_n\Omega^{[i]}_{X} \to \Omega^{[i]}_{X}$ is surjective for every $n\geq 0$ \cite[Lemma 2.9]{Kawakami-Sato}, 
and we have the iterated inverse Cartier operator
\begin{equation}\label{eq:inverse reflexive Cartier op}
    C^{-1}_n\colon \Omega^{[i]}_X \underset{\cong}{\xleftarrow{\,C_{n}\,}} \frac{Z_n\Omega^{[i]}_{X}}{B_n\Omega^{[i]}_{X}} \xhookrightarrow{\hphantom{ C_n }} \frac{F^n_{*}\Omega^{[i]}_{X}}{B_n\Omega^{[i]}_{X}}. %=:  G_n\Omega^{[i]}_X.
\end{equation}

\subsubsection{Vanishing theorem}
In this subsection we establish some key vanishing results.

\begin{lemma} \label{lemma:Kawakami-vanishing}
Let $X$ be a normal variety over a perfect field of  characteristic $p>0$. Let $\pi \colon Y \to X$ be a log resolution of $X$ with exceptional divisor $E$. 
Fix $m\geq 0$ and suppose that
\[
R^{>0}\pi_{*}\Omega^i_Y(\log E)=0
\]
for all $i\leq m-1$.
Then the following statements hold:
\begin{enumerate}
    \item $R^{>0}\pi_{*}Z_n\Omega^i_Y(\log E)=0$ for all $i\leq m-1$ and $n\geq 0$.\\[-0.7em]
    \item $R^{>0}\pi_{*}B_n\Omega^i_Y(\log E)=0$ for all $i\leq m$ and $n\geq 0$.\\[-0.7em]
    \item For any fixed $i\leq m$, the Cartier  operator 
    $C \colon Z\Omega^{[i]}_X\to \Omega^{[i]}_X$ is surjective if and only if $\pi_{*}\Omega^i_Y(\log E) = \Omega^{[i]}_X$.
\end{enumerate}
\end{lemma}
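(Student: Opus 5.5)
We argue by induction on $n$, using the short exact sequences \eqref{ZZB}--\eqref{eq:BBB2} with $A=0$ and the fact that $F$ is finite, so that $R\pi_*F^n_*=F^n_*R\pi_*$. We abbreviate $\Omega^i=\Omega^i_Y(\log E)$, $Z_n\Omega^i$, $B_n\Omega^i$.

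\textbf{Part (2) for $n=1$.} We first treat $B\Omega^i$ for $i\le m$ by induction on $i$. For $i=0$ we have $B\Omega^0=0$. For $1\le i\le m$, consider \eqref{eq:ZOmegaB,single} in degree $i-1$:
\[
0\to Z\Omega^{i-1}\to F_*\Omega^{i-1}\to B\Omega^{i}\to 0 .
\]
By hypothesis $R^{>0}\pi_*F_*\Omega^{i-1}=0$. Hence $R^{j}\pi_*B\Omega^i\cong R^{j+1}\pi_*Z\Omega^{i-1}$ for $j\ge 1$, so it suffices to know $R^{>1}\pi_*Z\Omega^{i-1}=0$. From \eqref{eq:BZOmega,single} in degree $i-1$,
\[
0\to B\Omega^{i-1}\to Z\Omega^{i-1}\to \Omega^{i-1}\to 0,
\]
together with $R^{>0}\pi_*\Omega^{i-1}=0$ and the inductive vanishing $R^{>0}\pi_*B\Omega^{i-1}=0$, we get $R^{>0}\pi_*Z\Omega^{i-1}=0$. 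This gives both $R^{>0}\pi_*B\Omega^i=0$ and (1) for $n=1$ and all $i\le m-1$.

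\textbf{Induction on $n$.} For (1), use \eqref{BZZ}:
\[
0\to F^{n-1}_*B\Omega^i\to Z_n\Omega^i\to Z_{n-1}\Omega^i\to 0 .
\]
For $i\le m-1$ the outer terms have vanishing higher direct images by the previous step and the induction hypothesis, so $R^{>0}\pi_*Z_n\Omega^i=0$. For (2), use \eqref{eq:BBB}:
\[
0\to F^{n-1}_*B\Omega^i\to B_n\Omega^i\to B_{n-1}\Omega^i\to 0 .
\]
This is valid for all $i\le m$, so $R^{>0}\pi_*B_n\Omega^i=0$. The key point is that (2) reaches $i=m$ only because $B\Omega^m$ is controlled by $Z\Omega^{m-1}$ and $\Omega^{m-1}$.

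\textbf{Part (3).} The direction ``$\Rightarrow$'' is Theorem \ref{thm:LET}(1). For ``$\Leftarrow$'', assume $\pi_*\Omega^i=\Omega^{[i]}_X$ and push forward \eqref{eq:BZOmega,single}:
\[
0\to \pi_*B\Omega^i\to \pi_*Z\Omega^i\to \pi_*\Omega^i\to R^1\pi_*B\Omega^i=0,
\]
using (2). Thus $C\colon \pi_*Z\Omega^i\to \Omega^{[i]}_X$ is surjective. Moreover $\pi_*Z\Omega^i\subseteq F_*\pi_*\Omega^i=F_*\Omega^{[i]}_X$ consists of $d$-closed forms, so $\pi_*Z\Omega^i\subseteq Z\Omega^{[i]}_X$. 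Since $\pi$ is an isomorphism over some big open set, the map $C$ on $Z\Omega^{[i]}_X$ restricts to this one. Hence $C\colon Z\Omega^{[i]}_X\to\Omega^{[i]}_X$ is surjective.

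The main obstacle is handling the indexing correctly at $i=m$ in (2), which is resolved as above. One should also check that the identification $\pi_*Z\Omega^i\subseteq Z\Omega^{[i]}_X$ is compatible with the Cartier operator; this holds because both are defined on the common regular locus and all sheaves involved are torsion-free.
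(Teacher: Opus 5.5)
Your proposal is correct and is essentially the paper's argument. You use the same d\'evissage through $0\to Z\Omega^{i-1}\to F_*\Omega^{i-1}\to B\Omega^{i}\to 0$ and \eqref{eq:BBB} for (2), and the pushforward of \eqref{eq:BZOmega,single} together with Theorem~\ref{thm:LET} for (3); the differences are cosmetic: the paper inducts on $m$ and gets the vanishing for $Z_n\Omega^{m-1}$ from \eqref{BZOmega} rather than \eqref{BZZ}, and you spell out the compatibility $\pi_*Z\Omega^i_Y(\log E)\subseteq Z\Omega^{[i]}_X$ with $C$ that the paper leaves implicit.
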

\noindent In particular, if $R\pi_*\Omega^i_Y(\log E) = \Omega^{[i]}_X$ for every $i \leq m$, then $C \colon Z\Omega^{[i]}_X \to \Omega^{[i]}_X$ is surjective and $R\pi_*B\Omega^i_Y(\log E) = B\Omega^{[i]}_X$ for every $i \leq m$ (see Theorem \ref{thm:LET}).
\begin{proof}
We first prove (1) and (2) by induction on $m$.
By inductive hypothesis, we may assume that $R^{>0}\pi_{*}B_n\Omega^{m-1}_Y(\log E)=0$ for all $n\geq 0$.
From the short exact sequences
\[
0 \to B_n\Omega^{m-1}_Y(\log E)\to Z_n\Omega^{m-1}_Y(\log E)\xrightarrow{C_n} \Omega^{m-1}_Y(\log E)\to 0,
\]
we obtain $R^{>0}\pi_{*}Z_n\Omega^{m-1}_Y(\log E)=0$ for all $n\geq 0$.
From the short exact sequences
\begin{align*}
    & 0 \to Z\Omega^{m-1}_Y(\log E)\to F_{*}\Omega^{m-1}_Y(\log E)\to B\Omega^{m}_Y(\log E)\to 0.\\
    & 0 \to F^{n-1}_*B\Omega^{m}_Y(\log E)\to B_n\Omega^{m}_Y(\log E)\to B_{n-1}\Omega^{m}_Y(\log E)\to 0.
\end{align*}
we obtain $R^{>0}\pi_{*}B_n\Omega^m_Y(\log E)=0$ for all $n\geq 0$.

Finally, we show (3). Fix $i\leq m$.
The only if part is Theorem \ref{thm:LET}.
The if part follows from (2) and the short exact sequence
\[
0 \to B\Omega^{i}_Y(\log E)\to Z\Omega^{i}_Y(\log E)\xrightarrow{C} \Omega^{i}_Y(\log E)\to 0. \qedhere
\] 
\end{proof}

\subsection{Modulo $p$-reductions}

In this subsection, we discuss how local cohomology reduces modulo $p$. We start with the following lemma.

\begin{lemma} \label{lem:reductionmodp-basic}
Let $f \colon A \to R$ be a map of rings and let 
\[
M \xrightarrow{\alpha} N \xrightarrow{\beta} K
\]
be a complex of $R$-modules supported in degrees $[-1,1]$. Assume that $A$ is a Noetherian domain, $R$ is a finitely generated $A$-algebra, and $M, N, K$ are finite modules over $R$.

Then there exists an open subset $U \subseteq \Spec A$ such that for every closed point $s \in \Spec A$, the natural map
\begin{equation} \label{eq:tensor-goal}
\cH^0(M \xrightarrow{\alpha} N \xrightarrow{\beta} K) \otimes_R R_s \longrightarrow  \cH^0(M_s \xrightarrow{\alpha_s} N_s \xrightarrow{\beta_s} K_s)
\end{equation}
is an isomorphism. 
\end{lemma}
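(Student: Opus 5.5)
Here $R_s \coloneqq R\otimes_A k(s)$, and similarly $M_s \coloneqq M\otimes_R R_s = M\otimes_A k(s)$, and likewise for $N_s$ and $K_s$. The statement should be read with $s$ ranging over closed points of $U$. The plan is to use generic freeness to make every module in sight $A$-flat, and then check that cohomology commutes with base change by splitting the complex into short exact sequences of flat modules.

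First, we form the auxiliary $R$-modules
\[
\ker\beta,\quad \operatorname{im}\beta,\quad \operatorname{coker}\beta,\quad \operatorname{im}\alpha,\quad \operatorname{coker}\alpha,\quad H\coloneqq \ker\beta/\operatorname{im}\alpha .
\]
They are all finite over $R$, since $R$ is Noetherian, being finitely generated over the Noetherian ring $A$. We also include $N/\ker\beta\cong\operatorname{im}\beta$ and $K/\operatorname{im}\beta$. By generic freeness (Grothendieck), there is a nonempty open $U\subseteq \Spec A$ over which $M,N,K$ and all of these finitely many modules become free as $A$-modules after localising at some $0\neq g\in A$. Replace $A$ by $A_g$, so that everything is flat over $A$.

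Next, we use the short exact sequences
\[
0\to \ker\beta\to N\to \operatorname{im}\beta\to 0,\qquad 0\to\operatorname{im}\beta\to K,\qquad 0\to \operatorname{im}\alpha\to\ker\beta\to H\to 0,
\]
together with the surjection $M\to\operatorname{im}\alpha$. Consider the cokernel $K/\operatorname{im}\beta$ and the cokernel $H$. Both are $A$-flat, so $\operatorname{Tor}_1^A(-,k(s))$ vanishes on them. Hence $0\to\operatorname{im}\beta\otimes k(s)\to K\otimes k(s)$ stays injective. Likewise, the first and third sequences remain exact after applying $-\otimes_A k(s)$, by the same flatness of their right-hand terms.

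Then, we identify the pieces. We get $\ker(\beta_s)=\ker(N_s\to K_s)$, and since $\operatorname{im}\beta_s\hookrightarrow K_s$ this equals $\ker(N_s\to (\operatorname{im}\beta)_s)=(\ker\beta)_s$. Also $\operatorname{im}(\alpha_s)$ is the image of $M_s\twoheadrightarrow(\operatorname{im}\alpha)_s\to(\ker\beta)_s$. This last map is injective by the exactness of the third sequence after base change. Hence
\[
\cH^0(M_s\to N_s\to K_s)=(\ker\beta)_s/(\operatorname{im}\alpha)_s\cong H_s,
\]
compatibly with the natural map \eqref{eq:tensor-goal}. This holds for every $s\in U$, closed or not.

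The main obstacle is only bookkeeping: choosing the right finite list of modules, including the two cokernels, so that every needed Tor-vanishing is guaranteed. Generic freeness applies to finite modules over a finitely generated $A$-algebra with $A$ a Noetherian domain, which is exactly the stated hypothesis.
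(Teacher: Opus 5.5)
Your proof is correct and is essentially the paper's argument: generic freeness makes ${\rm im}(\beta)$, $K/{\rm im}(\beta)$ and ${\rm ker}(\beta)/{\rm im}(\alpha)$ free over $A_g$. The resulting ${\rm Tor}_1$-vanishing keeps ${\rm im}(\beta)\otimes_A k(s)\to K_s$ and ${\rm ker}(\beta)\otimes_A k(s)\to N_s$ injective, which identifies $\cH^0$ of the reduced complex with $\bigl({\rm ker}(\beta)/{\rm im}(\alpha)\bigr)\otimes_A k(s)$; the extra modules you make free (such as $M$, $N$, $K$, ${\rm coker}(\alpha)$) are harmless but unnecessary, and reading the statement with $s$ ranging over $U$ is the intended meaning.
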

\noindent Here $R_s \coloneqq R \otimes_A k(s)$, $M_s \coloneqq M \otimes_A k(s)$,  $N_s \coloneqq M \otimes_A k(s)$,  and $K_s \coloneqq K \otimes_A k(s)$.
\begin{proof}
Note that $(-) \otimes_R R_s$ agrees with $(-) \otimes_A k(s)$. Namely both of these functor coincide with quotienting by the ideal $I_s \coloneqq {\rm ker}(A \to k(s))$.

By generic freeness \cite[Tag 051S]{stacks-project}, we can find an affine open subset $U\coloneqq \Spec A_g \subseteq \Spec A$ such that ${\rm ker}(\beta)/{\rm im}(\alpha)$, ${\rm im}(\beta)$, and $K/{\rm im}(\beta)$ are all free over $A_g$. Up to replacing $A$ by $A_g$, we may assume that these are free over $A$. 

We will now show that (\ref{eq:tensor-goal}) holds for every closed point $s \in \Spec A$. Consider the following diagram:
\[
\begin{tikzcd}[column sep = small]
& & 0 \ar{r} & {\rm im}(\beta) \otimes_R R_s \ar{r} & K_s \ar{r}  & \left(K/{\rm im}(\beta)\right) \otimes_R R_s \ar{r} & 0 \\
0 \ar{r} & {\rm ker}(\beta) \otimes_R R_s \ar{r} & N_s \ar{r}  &{\rm im}(\beta) \otimes_R R_s \ar{u}{=}  \ar{r} & 0 &  & \\
M_s \ar[two heads]{r}{(\star\star)} & {\rm im}(\alpha) \otimes_R R_s. \ar[hook]{u}{(\star)} & &  &
\end{tikzcd}
\]
Here the top row is a short exact sequence by ${\rm Tor}^1$ calculation, because $K/{\rm im}(\beta)$ is free over $A$. Similarly, the middle row is a short exact sequence, because ${\rm im}(\beta)$ is free over $A$. Likewise, the map $(\star)$ is an injection, because ${\rm ker}(\beta)/{\rm im}(\alpha)$ is free over $A$. Finally, the map $(\star\star)$ is surjective, because tensor product is right exact.

We see from this diagram that
\[
\cH^0(M_s \xrightarrow{\alpha_s} N_s \xrightarrow{\beta_s} K_s) \cong ({\rm ker}(\beta) \otimes_R R_s) / ({\rm im}(\alpha) \otimes_R R_s).
\]
Because ${\rm ker}(\beta)/{\rm im}(\alpha)$ is free over $A$, the short exact sequence:
\[
0 \to  {\rm im}(\alpha) \to {\rm ker}(\beta) \to {\rm ker}(\beta)/{\rm im}(\alpha) \to 0
\]
stays exact after tensoring by $k(s)$ over $A$, and so it stays exact after applying $(-) \otimes_R R_s$. Hence, we get the following identity as required:
\begin{align*}
\cH^0(M \xrightarrow{\alpha} N \xrightarrow{\beta} K) \otimes_R R_s &= \left({\rm ker}(\beta)/{\rm im}(\alpha)\right) \otimes_R R_s \\
&\cong ({\rm ker}(\beta) \otimes_R R_s) / ({\rm im}(\alpha) \otimes_R R_s). \qedhere
\end{align*}
\end{proof}

Let $A \to B$ be a map of rings and let $M, N$ be $A$-modules. Then there exist natural maps:
\[
{\rm Hom}_A(M,N) \otimes_A B \to {\rm Hom}_A(M,N \otimes_A B) \xrightarrow{\cong} {\rm Hom}_B(M \otimes_A B, N \otimes_A B).
\]
In turn, these give us natural maps
\[
{\rm Ext}^i_A(M,N) \otimes_A B \to {\rm Ext}^i_A(M,N \otimes_A B) \xrightarrow{\cong} {\rm Ext}^i_B(M \otimes_A B, N \otimes_A B)
\]
for every $i$.
\begin{proposition}\label{prop:reduction}
Let $A$ be a Noetherian domain and let $R$ be a finitely generated $A$-algebra. Let $Q$ be a finite $R$-module and let $M^\bullet \in D^{b}_{\rm fg}(R)$ be an element of the bounded derived category of finite $R$-modules. 

Fix an integer $i$. Then there exists an open subset $U \subseteq \Spec A$ such that for every closed point $s \in \Spec A$, the natural map
\[
\Ext^i_{R}(M^\bullet, Q) \otimes_R R_s \to \Ext^i_{R_s}(M^\bullet_s, Q_s)
\]
is an isomorphism. 
\end{proposition}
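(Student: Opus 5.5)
Replace $M^\bullet$ by a bounded-above complex $P^\bullet$ of finite free $R$-modules quasi-isomorphic to it; this is possible because $R$ is Noetherian. Then $\Ext^i_R(M^\bullet,Q)=\cH^i(\mathrm{Hom}_R(P^\bullet,Q))$. Only the three terms $P^{-i-1}\to P^{-i}\to P^{-i+1}$ contribute, so after shifting we get a three-term complex
\[
\mathrm{Hom}_R(P^{-i+1},Q)\to \mathrm{Hom}_R(P^{-i},Q)\to \mathrm{Hom}_R(P^{-i-1},Q)
\]
of finite $R$-modules. We may truncate $P^\bullet$ brutally below, at degree $-i-2$ say; this does not change the $\Ext^i$ or the relevant terms.

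The second step is to compare the base change of this complex with the corresponding complex for $M^\bullet_s$. Since each $P^k$ is finite free,
\[
\mathrm{Hom}_R(P^k,Q)\otimes_R R_s\cong \mathrm{Hom}_{R_s}(P^k_s,Q_s),
\]
so the base-changed three-term complex is exactly $\mathrm{Hom}_{R_s}(P^\bullet_s,Q_s)$ in the relevant degrees. Applying Lemma \ref{lem:reductionmodp-basic} to this three-term complex gives a dense open $U\subseteq\Spec A$ such that, for closed $s\in U$,
\[
\cH^i(\mathrm{Hom}(P^\bullet,Q))\otimes R_s\cong \cH^i(\mathrm{Hom}_{R_s}(P^\bullet_s,Q_s)).
\]

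The remaining point, which I expect to be the main obstacle, is that $P^\bullet_s=P^\bullet\otimes_A k(s)$ must still compute $M^\bullet_s$. Here $M^\bullet_s$ should be read as the derived base change $M^\bullet\otimes^L_A k(s)$, and then $P^\bullet_s$ computes it automatically. Otherwise, if $M^\bullet_s$ means the termwise or underived base change, we need the cohomology of $M^\bullet$ and the relevant truncations to commute with base change, i.e.\ to be $A$-flat. To arrange this, shrink $\Spec A$ by generic freeness (\cite[Tag 051S]{stacks-project}) so that the finitely many modules $\cH^k(M^\bullet)$, together with the images and cokernels of the differentials of $P^\bullet$ in the finitely many degrees near $-i$, are free over $A$. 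Then $P^\bullet\otimes_A k(s)$ computes $\cH^k(M^\bullet)\otimes_A k(s)$ in those degrees, which identifies it with $M^\bullet_s$ in the range that matters.

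Finally, intersect the finitely many open sets obtained in these steps to get $U$. The natural map in the statement is compatible with all the identifications above, because the map $\mathrm{Ext}\otimes B\to\mathrm{Ext}_B$ is induced termwise on $\mathrm{Hom}$ from a free resolution.
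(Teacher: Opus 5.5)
Your proposal is correct and follows the same route as the paper: you resolve $M^\bullet$ by a bounded-above complex of finite free $R$-modules, use $\mathrm{Hom}_R(P^k,Q)\otimes_R R_s\cong \mathrm{Hom}_{R_s}(P^k_s,Q_s)$, and apply Lemma \ref{lem:reductionmodp-basic} to the three-term complex computing $\Ext^i$. The paper defines $M^\bullet_s\coloneqq M^\bullet\otimes^L_R R_s$, which $P^\bullet\otimes_R R_s$ computes by construction; this agrees with $M^\bullet\otimes^L_A k(s)$ once $R$ is made $A$-flat by generic freeness, so your final discussion of underived base change is unnecessary.
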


\noindent \noindent Here $R_s \coloneqq R \otimes_A k(s)$, $Q_s \coloneqq Q \otimes_R R_s$, and $M^\bullet_s \coloneqq M \otimes^L_R R_s$.

\begin{proof}
Up to quasi-isomorphism, we can replace $M^\bullet$ by a bounded from above complex of free $R$-modules:
\[
\ldots \to R^{\oplus a_{i+1}} \xrightarrow{A_{i+1}} R^{\oplus a_{i}} \xrightarrow{A_{i}} R^{\oplus a_{i-1}} \to \ldots
\]
where $A_i$ are matrices with coefficients in $R$. Then $\Ext^i_{R}(M^\bullet, Q)$ is calculated by applying ${\rm Hom}_R(-,Q)$ to the above complex and taking the $i$-th cohomology. 

In turn, by definition, $M \otimes^L_A k(s)$ is calculated by applying $(-)\otimes_R R_s$ to the above complex so that we get
\[
\ldots \to R^{\oplus a_{i+1}}_s \xrightarrow{A_{i+1,s}} R^{\oplus a_{i}}_s \xrightarrow{A_{i,s}} R^{\oplus a_{i-1}}_s \to \ldots
\]
Then $\Ext^i_{R_s}(M_s^\bullet, Q_s)$ is calculated by applying ${\rm Hom}_{R_s}(-,Q_s)$ to the above complex and taking the $i$-th cohomology.\\

With the above notation, our goal is to show that the cohomology of
\[
\underbrace{{\rm Hom}_R(R^{\oplus a_{i+1}},Q)}_{Q^{\oplus a_{i+1}}} \leftarrow \underbrace{{\rm Hom}_R(R^{\oplus a_{i}},Q)}_{Q^{\oplus a_{i}}} \leftarrow \underbrace{{\rm Hom}_R(R^{\oplus a_{i-1}},Q)}_{Q^{\oplus a_{i-1}}}
\]
tensored by $R_s$ over $R$, is isomorphic to the cohomology of
\[
\underbrace{{\rm Hom}_{R_s}(R_s^{\oplus a_{i+1}},Q_s)}_{Q^{\oplus a_{i+1}} \otimes_R R_s} \leftarrow \underbrace{{\rm Hom}_{R_s}(R_s^{\oplus a_{i}},Q_s)}_{Q^{\oplus a_{i+1}}\otimes_R R_s} \leftarrow \underbrace{{\rm Hom}_{R_s}(R_s^{\oplus a_{i-1}},N_s)}_{Q^{\oplus a_{i+1}}\otimes_R R_s}
\]
for closed points $s$ in some open subset $U \subseteq \Spec A$. This follows immediately from Lemma \ref{lem:reductionmodp-basic}.  \qedhere
\end{proof}

\begin{proposition}\label{prop:reduction(local-coh)}
     Let $X$ be a normal affine variety over a field $K$ of characteristic zero, and let $f \colon M \to N$ be a map between $M, N \in D^b_{\rm coh}(X)$. Let $X_A$ be flat model of $X$ over a finitely generated $\bZ$-subalgebra $A \subseteq K$. Suppose there exist $M_A, N_A \in D^b_{\rm coh}(X_A)$ and $f_A \colon M_A \to N_A$ such that 
     \[
     M_A \otimes^L_A K \cong M \quad N_A \otimes^L_A K \cong M \quad f_A \otimes^L_A K = f.
     \]
     Fix an integer $j \geq 0$. Then there exists an open subset $U \subseteq \Spec A$ such that the following statements are equivalent:
     \begin{enumerate}
         \item the map
     \[
     f^* \colon H^j_\m(M) \to H^j_\m(N)
     \]
     is injective (resp.\ surjective) for every closed point $\m \in X$,
     \item for some (equivalently, every) closed point $s \in U$ the map
     \[
     f_s^* \colon H^j_\n(M_s) \to H^j_\n(N_s)
     \]
     is injective (resp.\ surjective) for every closed point $\n \in X_s$.
     \end{enumerate} 
\end{proposition}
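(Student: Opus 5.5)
The plan is to translate injectivity and surjectivity on local cohomology into statements about $\Ext$ sheaves via local duality, and then spread these statements out using Proposition \ref{prop:reduction}.

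\textbf{Step 1: reduce to the Ext map.} Write $X=\Spec R$, $X_A=\Spec R_A$, and let $d=\dim X$. After shrinking $\Spec A$, we may assume the following: $X_A\to\Spec A$ is flat with fibres equidimensional of dimension $d$; $X_A$ is embedded in a smooth affine space $P_A=\mathbb{A}^N_A$; and $\omega^\bullet_{X_A}\coloneqq R\cHom_{P_A}(\cO_{X_A},\omega_{P_A}[N])$ base changes to the normalised dualizing complexes of $X$ and of every fibre $X_s$. This last point uses $\omega_{P_A}$ flat and Proposition \ref{prop:reduction} applied to each $\Ext^i_{P_A}(\cO_{X_A},\omega_{P_A})$.

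By local duality at a closed point $\m$, the dual of $f^*\colon H^j_\m(M)\to H^j_\m(N)$ is the completion at $\m$ of
\[
E(f)\colon \Ext^{-j}(N,\omega^\bullet_X)\to \Ext^{-j}(M,\omega^\bullet_X).
\]
Hence $f^*$ is injective (resp.\ surjective) for every closed point $\m$ if and only if the coherent sheaf $\mathrm{coker}\,E(f)$ (resp.\ $\ker E(f)$) vanishes, because a finite module is zero when all its completions at maximal ideals are zero. The same statement holds on each fibre $X_s$, which is of finite type over the field $k(s)$, so local duality applies to every closed point $\n\in X_s$.

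\textbf{Step 2: spread out the Ext modules.} Compute everything on $P_A$ so that Proposition \ref{prop:reduction} applies with $Q=\omega_{P_A}$, using $\Ext^{-j}_{X_A}(M_A,\omega^\bullet_{X_A})=\Ext^{N-j}_{P_A}(M_A,\omega_{P_A})$ by Grothendieck duality for the closed immersion. Proposition \ref{prop:reduction} is formulated for a module $Q$ and an arbitrary complex, so it applies to both $M_A$ and $N_A$. After shrinking to $U$, the modules $\Ext^{-j}(M_A,\omega^\bullet)$ and $\Ext^{-j}(N_A,\omega^\bullet)$ commute with $\otimes_R R_s$ for every closed $s\in U$. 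The isomorphisms are natural, so they are compatible with $E(f_A)$.

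Now apply generic freeness to $\ker E(f_A)$, $\mathrm{im}\,E(f_A)$ and $\mathrm{coker}\,E(f_A)$, and shrink $U$ further so that all three are free over $A$. Then, exactly as in Lemma \ref{lem:reductionmodp-basic}, we get $\mathrm{coker}\,E(f_A)\otimes R_s=\mathrm{coker}\,E(f_s)$ and $\ker E(f_A)\otimes R_s=\ker E(f_s)$.

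\textbf{Step 3: compare the generic and special fibres.} Let $C=\mathrm{coker}\,E(f_A)$, a finite $R_A$-module that is free over $A$. Its generic fibre $C\otimes_A K$ is $\mathrm{coker}\,E(f)$, since $K$ is flat over $A$ and Ext commutes with flat base change. Because $C$ is free over the domain $A$, the following are equivalent:
\begin{itemize}
\item $C=0$;
\item $C\otimes_A K=0$;
\item $C\otimes_A k(s)=0$ for one closed point $s$;
\item $C\otimes_A k(s)=0$ for every closed point $s$.
\end{itemize}
Indeed, all fibres of a free $A$-module have the same rank. The same argument applies to the kernel. This gives the equivalence of (1) with (2) for some closed $s\in U$, and also for every closed $s\in U$.

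\textbf{Main obstacle.} I expect the main care to be needed in Step 1: ensuring, after shrinking, that the relative dualizing complex restricts on each fibre to the normalised dualizing complex. This is needed so that local duality on $X_s$ uses the same Ext groups that the base change produces, with consistent indexing ($-j$ versus $N-j$) and no torsion appearing in the Ext groups of $\omega_{P_A}$. This is handled by applying generic freeness to finitely many Ext sheaves before invoking Proposition \ref{prop:reduction}.
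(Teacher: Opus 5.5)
Your proposal is correct and follows essentially the same route as the paper: embed into a smooth affine space over $A$, use local duality to turn injectivity (resp.\ surjectivity) of $f^*$ on $H^j_\m$ into vanishing of the cokernel (resp.\ kernel) of the induced map of $\Ext$ modules into the relative canonical sheaf, spread these $\Ext$ modules out with Proposition \ref{prop:reduction}, and make the kernel and cokernel $A$-free by generic freeness, as in Lemma \ref{lem:reductionmodp-basic}. The only difference is cosmetic: the paper replaces $X_A$ by the ambient smooth $W_A$ from the start, since local cohomology at closed points does not change under pushforward along a closed immersion. Since you compute everything on $P_A$ in Step 2 anyway, your Step 1 concern about base-changing $\omega^\bullet_{X_A}$ to the fibres is not needed.
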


\begin{proof}
Consider a closed embedding $u \colon X \to W$ to a smooth over $K$ affine variety $W \coloneqq \Spec K[x_1,\ldots, x_m]$. We may assume that $u$ extends to a closed embedding of models $u_A \colon X_A \to W_A$ where $W_A \coloneqq \Spec A[x_1,\ldots, x_m]$ is smooth over $\Spec A$. Let $u_s \colon X_s \to W_s$ be the restriction of $u$ to the fibre over $s \in \Spec A$. 
Since
\begin{align*}
H^j_\m(M) &= H^j_\m(Ru_*M) \quad &&\text{ and }&& H^j_\m(N) = H^j_\m(Ru_*N)  \\
H^j_\n(M_s) &= H^j_\n(R(u_s)_*M_s) \quad &&\text{ and }&& H^j_\n(N_s) = H^j_\n(R(u_s)_*N_s)
\end{align*}
we can replace $X_A$ by $W_A$, and assume from now on that $X_A$ is smooth over $\Spec A$. 

Now, let $d$ be the dimension of $X$ over $K$. Write $X = \Spec R$,  $X_A = \Spec R_A$, and identify $M_A$ and $N_A$ with elements of the derived category $D^b_{\rm fg}(R_A)$ of finitely generated $R_A$-modules.  By Matlis duality on $X$, the problem reduces to showing that there exists an open subset $U \subseteq \Spec A$ such that
\[
\Ext^{d-j}_{R_A}(M_A, \Omega^d_{R_A/A}) \xleftarrow{(\star)}\Ext^{d-j}_{R_A}(N_A, \Omega^d_{R_A/A})
\]
is injective (resp.\ surjective) over $K$ if and only if
\[
\Ext^{d-j}_{R_s}(M_s, \omega_{R_s}) \xleftarrow{(\star\star)} \Ext^{d-j}_{R_s}(N_s, \omega_{R_s})
\] 
is injective (resp.\ surjective) for some (equivalently, every) closed point $s \in U$. We warn the reader that $\Omega^d_{R_A/A} \not \cong \omega_{R_A}$ as the absolute dimension of $R_A$ is $d+1$.\\ 

Consider the following diagram:
\begin{equation} \label{eq:BCdef}
0 \to B \to \Ext^{d-j}_{R_A}(N_A, \Omega^d_{R_A/A}) \xrightarrow{(\star)} \Ext^{d-j}_{R_A}(M_A, \Omega^d_{R_A/A}) \to C \to 0,
\end{equation}
where $B$ and $C$ are the kernel and cokernel of the map $(\star)$, respectively. By generic freeness \cite[Tag 051S]{stacks-project}, we can replace $\Spec A$ by an open subset so that both 
\begin{equation} \label{eq:BCfree}
\text{$B$ and $C$ are free over $A$.}
\end{equation}
Further, we replace $\Spec A$ by another open subset using Proposition \ref{prop:reduction} so that
\begin{align*}
\Ext^{d-j}_{R_A}(N_A, \Omega^d_{R_A/A}) \otimes_R R_s &\cong \Ext^{d-j}_{R_s}(N_s, \omega_{R_s}), \text{ and } \\
\Ext^{d-j}_{R_A}(M_A, \Omega^d_{R_A/A}) \otimes_R R_s &\cong \Ext^{d-j}_{R_s}(M_s, \omega_{R_s})
\end{align*}
for every closed point $s \in \Spec A$, where $\Omega^d_{R_A/A} \otimes_R R_s \cong \Omega^d_{R_s} = \omega_{R_s}$ as $R_s$ is smooth. In turn, we can replace $\Spec A$ by yet another open subset using Lemma \ref{lem:reductionmodp-basic} so that
\begin{equation} \label{eq:sesReducedBC}
0 \to B_s \to \Ext^{d-j}_{R_s}(N_s, \omega_{R_s}) \xrightarrow{(\star\star)} \Ext^{d-j}_{R_s}(M_s, \omega_{R_s}) \to C_s \to 0
\end{equation}
is exact for every $s \in \Spec A$, where $B_s \coloneqq B \otimes_R R_s$ and $C_s \coloneqq C \otimes_R R_s$.

Then we have the following equivalences:
\begin{align*}
\text{$(\star)$ is injective over $K$} &\overset{(\ref{eq:BCdef})}{\iff} B \otimes_A K = 0 \\
&\overset{(\ref{eq:BCfree})}{\iff}B = 0 \\
&\overset{(\ref{eq:BCfree})}{\iff} B_s = 0 \text{ for some/every closed point $s \in \Spec A$} \\
&\overset{(\ref{eq:sesReducedBC})}{\iff} \text{$(\star\star)$ is injective  for some/every closed point $s \in \Spec A$}
\end{align*}
\begin{align*}
\text{$(\star)$ is surjective over $K$} 
&\overset{(\ref{eq:BCdef})}{\iff} C \otimes_A K = 0 \\
&\overset{(\ref{eq:BCfree})}{\iff} C = 0 \\
&\overset{(\ref{eq:BCfree})}{\iff} C_s = 0 \text{ for some/every closed point $s \in \Spec A$} \\
&\overset{(\ref{eq:sesReducedBC})}{\iff} \text{$(\star\star)$ is surjective  for some/every closed point $s \in \Spec A$}.
\end{align*}
This concludes the proof of the proposition.
\end{proof}

\begin{remark} \label{rem:reduction-zero}
The same argument works to  detect that $f^* \colon H^j_\m(M) \to H^j_\m(N)$ is zero if and only if  $f_s^* \colon H^j_\n(M_s) \to H^j_\n(N_s)$ is zero.
\end{remark}

\begin{corollary}\label{cor:reduction(local-coh)-Omega}
    Let $X$ be a normal affine variety over a field $K$ of characteristic zero, and let $\pi\colon Y\to X$ be a log resolution with reduced exceptional divisor $E$. Let $\pi_A \colon Y_A \to X_A$, with exceptional divisor $E_A$, be a model of $\pi$ over a finitely generated $\bZ$-subalgebra $A \subseteq K$. Fix integers $i,j \geq 0$.
    
    Then there exists an open subset $U \subseteq \Spec A$ such that the following are equivalent:

    \begin{enumerate}
         \item the map
     \[
      H^j_\m(\pi_{*}\Omega^i_Y(\log E)) \to H^j_\m(R\pi_{*}\Omega^i_Y(\log E))
     \]
     is injective (resp.\ surjective) for every closed point $\m \in X$,
     \item for some (equivalently, every) closed point $s \in U$ the map
     \[
     H^j_\n(\pi_{s,*}\Omega^i_{Y_s}(\log E_s)) \to H^j_\n(R\pi_{s,*}\Omega^i_{Y_s}(\log E_s))
     \]
     is injective (resp.\ surjective) for every closed point $\n \in X_s$.
     \end{enumerate} 
     Moreover, $\pi_{*}\Omega^{i}_Y(\log E) \to \Omega^{[i]}_X$ is an isomorphism if and only if $\pi_{s,*}\Omega^{i}_{Y_s}(\log {E_s}) \to \Omega^{[i]}_{X_s}$ 
is an isomorphism for some (equivalently, every) closed point $s \in U$.
\end{corollary}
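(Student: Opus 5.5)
The corollary follows from Proposition \ref{prop:reduction(local-coh)} once the relevant objects on $X$ are given compatible models over $A$. On $X_A$ we take
\[
M_A \coloneqq \pi_{A,*}\Omega^i_{Y_A/A}(\log E_A), \qquad N_A \coloneqq R\pi_{A,*}\Omega^i_{Y_A/A}(\log E_A),
\]
and let $f_A\colon M_A \to N_A$ be the natural map $\cH^0 \to$ (complex). Both objects lie in $D^b_{\rm coh}(X_A)$ because $\pi_A$ is proper. Flat base change to $K$ gives $M_A \otimes^L_A K \cong \pi_*\Omega^i_Y(\log E)$ and $N_A\otimes^L_A K \cong R\pi_*\Omega^i_Y(\log E)$, and $f_A$ restricts to $f$.

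The real work is to shrink $\Spec A$ so that these models also specialise correctly to the fibres over closed points $s$. First, after shrinking, we may assume $Y_A \to \Spec A$ is smooth and $E_A$ is relatively snc, so that $\Omega^i_{Y_A/A}(\log E_A)$ is locally free and flat over $A$ and restricts to $\Omega^i_{Y_s}(\log E_s)$. Second, by generic freeness we may assume every $R^k\pi_{A,*}\Omega^i_{Y_A/A}(\log E_A)$ is free over $A$ (only finitely many $k$ occur). Lemma \ref{lem:Hara's lemma} then gives
\[
R^k\pi_{A,*}\Omega^i_{Y_A/A}(\log E_A)\otimes_A k(s) \cong R^k\pi_{s,*}\Omega^i_{Y_s}(\log E_s)
\]
for all $k$. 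Since all cohomology sheaves are $A$-flat, the derived base change $N_A\otimes^L_A k(s)$ has these as its cohomology, so it is identified with $R\pi_{s,*}\Omega^i_{Y_s}(\log E_s)$. Likewise $M_A\otimes^L_A k(s) = M_A\otimes_A k(s) \cong \pi_{s,*}\Omega^i_{Y_s}(\log E_s)$, and $f_A$ specialises to the natural map $f_s$. Checking these identifications is the main point, though it is routine. With them in place, Proposition \ref{prop:reduction(local-coh)}, applied with this $j$ and a possibly smaller open $U$, gives the equivalence of (1) and (2) directly, for both injectivity and surjectivity.

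For the final assertion, we use that $X$ is normal and $\Omega^{[i]}_X$ is reflexive. Take $Z\subseteq X$ to be the locus of codimension at least two over which $\pi$ is not an isomorphism, together with the singular locus. The sheaf $\pi_*\Omega^i_Y(\log E)$ is torsion-free and agrees with $\Omega^{[i]}_X$ away from $Z$, so the map $\pi_*\Omega^i_Y(\log E)\to\Omega^{[i]}_X$ is always injective. Hence it is an isomorphism exactly when $\pi_*\Omega^i_Y(\log E)$ is reflexive. By Lemma \ref{lem:reflexive} and Remark \ref{rem:Sk}, reflexivity is equivalent to the vanishing of $H^{k}_\m(\pi_*\Omega^i_Y(\log E))$ at the relevant closed points for $k\in\{\dim\overline{\p},\dim\overline{\p}+1\}$. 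It is simpler to rephrase this as the cokernel of $\pi_*\Omega^i_Y(\log E)\hookrightarrow \Omega^{[i]}_X$ being zero.

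Concretely, we model $\Omega^{[i]}_X$ by $j_{A,*}\Omega^i_{U_A/A}$, shrinking $A$ so that it is flat, commutes with base change, and restricts to $\Omega^{[i]}_{X_s}$. The last requirement holds, for example, by writing $\Omega^{[i]}$ as $\cHom(\cHom(\Omega^i,\cO),\cO)$ and using Proposition \ref{prop:reduction} twice. By generic freeness the cokernel of the model map is then $A$-free. It vanishes over $K$ if and only if it vanishes after tensoring with $k(s)$, which by right exactness and the base change identifications is the cokernel over $X_s$. This yields the isomorphism statement for some, equivalently every, closed point $s\in U$.

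The main obstacle is ensuring that the reflexive hull $\Omega^{[i]}$ commutes with reduction modulo $p$; the double-dual description combined with Proposition \ref{prop:reduction} is the first approach to try.
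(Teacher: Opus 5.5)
Your proof is correct. For the equivalence (1)$\iff$(2) it is the paper's argument. You spread out to a relatively snc pair, use generic freeness of the $R^k\pi_{A,*}\Omega^i_{Y_A/A}(\log E_A)$ together with Lemma \ref{lem:Hara's lemma} to identify the reductions of $\pi_{A,*}$ and $R\pi_{A,*}$ with $\pi_{s,*}$ and $R\pi_{s,*}$, and then apply Proposition \ref{prop:reduction(local-coh)}. One phrasing fix: say that the natural base change map $N_A\otimes^L_A k(s)\to R\pi_{s,*}\Omega^i_{Y_s}(\log E_s)$ is a quasi-isomorphism because it induces the isomorphisms of Lemma \ref{lem:Hara's lemma} on cohomology. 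Matching cohomology sheaves alone does not identify the two complexes.

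For the \emph{moreover} statement you take a different route from the paper.
\begin{itemize}
\item The paper either cites the claim in the proof of \cite[Corollary 3.6]{Kawakami-Sato}, or reduces the vanishing of $H^j_\m(\pi_*\Omega^i_Y(\log E))$ modulo $p$ by the same argument and then invokes Lemma \ref{lem:reflexive}. That route reuses the model of $\pi_*\Omega^i_Y(\log E)$ already built and never models $\Omega^{[i]}_X$. The cost is passing through local duality and a reflexivity criterion indexed by primes $\p\in Z$ of varying dimension, rather than one local cohomology group at all closed points.
\item You model $\Omega^{[i]}_X$ as the double dual of $\Omega^i_{X_A/A}$ and show it specialises to $\Omega^{[i]}_{X_s}$ by two applications of Proposition \ref{prop:reduction} in degree $0$. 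You then use generic freeness of the cokernel of the injective map $\pi_{A,*}\Omega^i_{Y_A/A}(\log E_A)\to\Omega^{[i]}_{X_A/A}$ together with right exactness.
\end{itemize}
Your route is more elementary and self-contained, and the ``some/every'' equivalence falls out directly from freeness of the cokernel.

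The double-dual step works as you hope. $\Ext^0$ out of a complex concentrated in nonpositive degrees only sees its $\cH^0$, so you need no flatness of the intermediate dual, and Kähler differentials commute with base change. Two points are worth recording explicitly:
\begin{itemize}
\item Shrink $\Spec A$ so that the fibres $X_s$ are normal. Then the double dual really is $\Omega^{[i]}_{X_s}$ and $\pi_s$ is a log resolution.
\item The specialised map is the natural one. Two maps into a torsion-free sheaf that agree on a dense open coincide.
\end{itemize}
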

\noindent Here $Y_s$, $E_s$, and $\pi_s$ are restrictions of $Y_A$, $E_A$, and $\pi_A$ over $k(s)$. 
\begin{proof}
By replacing $\Spec A$ by an affine open subset $U$ we may assume that $\pi_s \colon Y_s \to X_s$ is a log resolution of singularities of $X_s$ with simple normal crossing divisor $E_s$. Moreover, by shrinking $\Spec A$ even further, we may assume by generic freeness \cite[\href{https://stacks.math.columbia.edu/tag/051S}{Tag 051S}]{stacks-project} and Lemma \ref{lem:Hara's lemma} that
\begin{align*}
\pi_{A,*}\Omega^i_{Y_A/A}(\log E_A) \otimes_{\cO_{X_A}} \cO_{X_s} &\cong \pi_{s,*}\Omega^i_{Y_s}(\log E_s), \text{ and } \\[0.3em]
R\pi_{A,*}\Omega^i_{Y_A/A}(\log E_A) \otimes^L_{\cO_{X_A}} \cO_{X_s} &\cong R\pi_{s,*}\Omega^i_{Y_s}(\log E_s).
\end{align*}
Then the statement of the corollary follows immediately from Proposition \ref{prop:reduction(local-coh)}.

The \emph{moreover} statement follows from the claim in the proof of \cite[Corollary 3.6]{Kawakami-Sato}. Alternatively the same argument as above shows that  
\[
H^j_\m(\pi_{*}\Omega^i_Y(\log E))=0 \quad \text{ for all closed points $\m \in X$}
\]
if and only if for some (equivalently, every) closed point $s \in U$:
\[
H^j_\n(\pi_{s,*}\Omega^i_{Y_s}(\log E_s))=0 \quad \text{ for all closed points $\n \in X_s$.}
\]
Then the assertion can be deduced from Lemma \ref{lem:reflexive}. \qedhere
\end{proof}

\section{Definitions and foundational properties of higher $F$-rationality}

First, we introduce the notion of pre-$m$-$F$-rationality.
Let $X$ be a normal variety {over a perfect field $k$ of characteristic $p>0$}.
Suppose that $C \colon Z\Omega^{[i]}_{X} \to \Omega^{[i]}_{X}$ is surjective.
Then we have
the iterated inverse Cartier operator \eqref{eq:inverse reflexive Cartier op}
\[
    C^{-1}_n\colon \Omega^{[i]}_X \underset{\cong}{\xleftarrow{\,C_{n}\,}}  \frac{Z_n\Omega^{[i]}_{X}}{B_n\Omega^{[i]}_{X}} \xhookrightarrow{\hphantom{ C_n}} \frac{F^n_{*}\Omega^{[i]}_{X}}{B_n\Omega^{[i]}_{X}}.   
\]

\begin{definition} \label{def:prekFinj-iso}
Let $X$ be a $d$-dimensional normal affine variety over a perfect field $k$ of characteristic $p>0$.
Fix an integer $m \geq 0$. 
We say $X$ is \emph{pre-$m$-$F$-rational} if
\begin{enumerate}
\item $C \colon Z\Omega^{[i]}_{X} \to \Omega^{[i]}_{X}$ is surjective for all $i\leq m$, and\\[-0.9em]
\item for every closed point $\m \in X$ and every Cartier divisor $D\geq 0$ on $X$, there exists a positive integer $n_0>0$ such that the composition
\[
H^j_\m(\Omega^{[i]}_{X}) \xrightarrow{C_n^{-1}} H^j_\m\left(\frac{F^n_{*}\Omega^{[i]}_{X}}{B_n\Omega^{[i]}_{X}}\right) \xrightarrow{\mathrm{nat.}} H^j_\m\left(\frac{F^n_{*}\Omega^{[i]}_{X}(D)}{B_n\Omega^{[i]}_{X}}\right)
\]
is injective for all $i,j \geq 0$ and $n \geq n_0$ such that $i\leq m$ and $i+j \leq d$.
\end{enumerate}
We say that $X$ is \emph{$m$-$F$-rational} if it is pre-$m$-$F$-rational and $\mathrm{codim}_{X}\, {\rm Sing}(X) > 2m+1$.
\end{definition}

Note that if $X$ is pre-$m$-$F$-rational, then
$\pi_{*}\Omega^{i}_Y(\log E)\cong \Omega^{[i]}_X$ holds for all $i\leq m$ by Condition (1) and Theorem \ref{thm:LET}, where $\pi\colon Y\to X$ is a log resolution with reduced exceptional divisor $E$.\\

Next, we prove Theorem \ref{thm:IndependenceIntro} treating equivalent definitions of higher $F$-rationality.

\begin{proof}[{Proof of Theorem \ref{thm:IndependenceIntro}}]
Clearly (1) $\implies$ (2). We start by discussing the implication (2) $\implies$ (3). We need to find an integer $n_0>0$ such that the composition
\begin{equation} \label{eq:goalmapequivalent}
H^j_\m(\Omega^{[i]}_{X}) \xrightarrow{C_n^{-1}} H^j_\m\left(\frac{F^n_{*}\Omega^{[i]}_{X}}{B_n\Omega^{[i]}_{X}}\right) \xrightarrow{\delta} H^{j+1}_\m(B_n\Omega^{[i]}_X) 
\end{equation}
is injective when $0 < i \leq m$, $i+j \leq d$, and $n \geq n_0$. 

Pick an effective divisor $D'$ containing ${\rm Sing}(X)$. We have  $j< d$, and so by Matlis duality we can pick $r>0$ such that the natural map
\[
H^j_\m(\Omega^{[i]}_{X}) \to H^j_\m(\Omega^{[i]}_{X}(D))
\]
is a zero map where $D \coloneqq rD'$. Here we are using that the Matlis dual of $H^j_\m(\Omega^{[i]}_{X})$ for $j<d$ is supported on ${\rm Sing}(X)$.

We may assume that (2) holds for $D$. Now consider the following diagram
\begin{equation} \label{eq:independentBigDiagram}
\begin{tikzcd}[column sep = large, row sep = large]
&  H^j_\m(F^n_{*}\Omega^{[i]}_{X}) \ar{r}{0} \ar{d} & H^j_\m(F^n_{*}\Omega^{[i]}_{X}(D)) \ar{d} \\
H^j_\m(\Omega^{[i]}_{X}) \ar{r}{C_n^{-1}} & H^j_\m\left(\frac{F^n_{*}\Omega^{[i]}_{X}}{B_n\Omega^{[i]}_{X}}\right) \ar{d}{\delta} \ar{r}{\rm nat} & H^j_\m\left(\frac{F^n_{*}\Omega^{[i]}_{X}(D)}{B_n\Omega^{[i]}_{X}}\right) \ar{d}{\delta_D} \\
& H^{j+1}_\m(B_n\Omega^{[i]}_X) \ar{r}{=} & H^{j+1}_\m(B_n\Omega^{[i]}_X).
\end{tikzcd}
\end{equation}
Here $\delta_D$ is induced by the long exact sequence of local cohomology associated to:
\[
0 \to B_n\Omega^{[i]}_{X} \to F^n_{*}\Omega^{[i]}_{X}(D) \to \frac{F^n_{*}\Omega^{[i]}_{X}(D)}{B_n\Omega^{[i]}_{X}} \to 0.
\]
We see from the above diagram that (\ref{eq:goalmapequivalent}) is injective when the composition 
\begin{equation} \label{eq:independentDdef}
H^j_\m(\Omega^{[i]}_{X}) \xrightarrow{C_n^{-1}} H^j_\m\left(\frac{F^n_{*}\Omega^{[i]}_{X}}{B_n\Omega^{[i]}_{X}}\right) \xrightarrow{\rm nat} H^j_\m\left(\frac{F^n_{*}\Omega^{[i]}_{X}(D)}{B_n\Omega^{[i]}_{X}}\right)
\end{equation}
is injective which is our assumption. This concludes the proof of (2) $\implies$ (3).

In order to show that (3) $\implies$ (1), we pick any effective Cartier divisor $D$. Up to enlarging $D$, we may assume as before that
\[
H^j_\m(\Omega^{[i]}_{X}) \to H^j_\m(\Omega^{[i]}_{X}(D))
\]
is a zero map for $j<d$. By tracing through the above diagram \eqref{eq:independentBigDiagram}, we get that the injectivity of \eqref{eq:goalmapequivalent} implies the injectivity of \eqref{eq:independentDdef} for $i>0$ as desired. The case of $i=0$ follows from the assumption on classical $F$-rationality.

Finally, to show the equivalence (3) $\iff$ (4), we first consider the following diagram
\[
\begin{tikzcd}
0 \ar{r} & B_n\Omega^{[i]}_X \ar{r} & F^n_*\Omega^{[i]}_X \ar{r} & \frac{F^n_{*}\Omega^{[i]}_{X}}{B_n\Omega^{[i]}_{X}} \ar{r} & 0\\  
0 \ar{r} & B_n\Omega^{[i]}_X \ar{u}{=} \ar{r} & Z_n\Omega^{[i]}_X \ar{u} \ar{r}{C_n} & \Omega^{[i]}_X \ar{u}[swap]{C^{-1}_n} \ar{r} & 0    
\end{tikzcd}
\]
with both rows exact. The construction of such a diagram  follows from the definition of higher cycles and boundaries. In turn, we get the induced map of long exact sequences of local cohomology
\[
\begin{tikzcd}
H^j_\m(F^n_*\Omega^{[i]}_X) \ar{r} & H^j_\m\left(\frac{F^n_{*}\Omega^{[i]}_{X}}{B_n\Omega^{[i]}_{X}}\right) \ar{r}{\delta} & H^{j+1}_\m(B_n\Omega^{[i]}_X)  \\
H^j_\m(Z_n\Omega^{[i]}_X) \ar{u} \ar{r}{C_n} & H^j_\m(\Omega^{[i]}_X) \ar{u}[swap]{C^{-1}_n} \ar{r}{\delta'} & H^{j+1}_\m(B_n\Omega^{[i]}_X). \ar{u}[swap]{=}
\end{tikzcd}
\]
In particular, in (3) we can replace the injectivity of $\delta \circ C^{-1}_n$ by the injectivity of $\delta'$, which is the same as the map $C_n$ being zero. Thus (3) $\iff$ (4) as required.
\end{proof}

\begin{corollary} \label{cor:correctDef}
 Let $X$ be a $d$-dimensional normal affine variety over a perfect field $k$ of characteristic $p>0$. Then $X$ is $m$-$F$-rational if and only if 
 \begin{enumerate}
     \item $\mathrm{codim}_{X}\, {\rm Sing}(X) > 2m+1$, \\[-0.8em]
     \item $X$ is $F$-rational, and \\[-0.8em] 
     \item there exists an integer $n_0>0$ such that the composition
\[
H^j_\m(\Omega^{[i]}_{X}) \xrightarrow{C_n^{-1}} H^j_\m\left(\frac{F^n_{*}\Omega^{[i]}_{X}}{B_n\Omega^{[i]}_{X}}\right) \xrightarrow{\delta} H^{j+1}_\m(B_n\Omega^{[i]}_X) 
\]
is injective for all $i, j, n$ such that $0 < i \leq m$,  $i+j \leq d$, and $n \geq n_0$.
 \end{enumerate}
\end{corollary}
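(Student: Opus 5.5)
The plan is to reduce the corollary directly to Theorem \ref{thm:IndependenceIntro}, specifically to the equivalence (1) $\iff$ (3) there. Condition (1) of the corollary, $\mathrm{codim}_X\,{\rm Sing}(X) > 2m+1$, appears verbatim on both sides: it is exactly the extra requirement that upgrades pre-$m$-$F$-rationality to $m$-$F$-rationality in Definition \ref{def:prekFinj-iso}. So it suffices to show that $X$ is pre-$m$-$F$-rational if and only if conditions (2) and (3) of the corollary hold.

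One point has to be settled first. Theorem \ref{thm:IndependenceIntro} assumes from the outset that $C \colon Z\Omega^{[i]}_X \to \Omega^{[i]}_X$ is surjective for all $i \leq m$. The corollary does not state this, but condition (3) involves the map $C_n^{-1} \colon H^j_\m(\Omega^{[i]}_X) \to H^j_\m(F^n_*\Omega^{[i]}_X/B_n\Omega^{[i]}_X)$. This map is only defined through \eqref{eq:inverse reflexive Cartier op}, which requires surjectivity of $C$ on $Z\Omega^{[i]}_X$ (and then surjectivity of $C_n$ by \cite[Lemma 2.9]{Kawakami-Sato}). I will therefore read (3) as including this surjectivity for $0<i\leq m$. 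For $i=0$ surjectivity is automatic: $Z\Omega^{[0]}_X = F_*\cO_X$, $C$ is the usual splitting of Frobenius on $p$-th powers, $B_n\Omega^{[0]}_X=0$, and $C_n^{-1}$ is just $F^n$.

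Forward direction. Suppose $X$ is $m$-$F$-rational.
\begin{itemize}
\item Condition (1) of Definition \ref{def:prekFinj-iso} gives the surjectivity of $C$ for all $i \leq m$, so the hypotheses of Theorem \ref{thm:IndependenceIntro} are satisfied.
\item Condition (2) of the definition, specialised to $i=0$, becomes injectivity of $H^j_\m(\cO_X) \to H^j_\m(F^n_*\cO_X(D))$ for all $j \leq d$ and $n \geq n_0$. This is precisely the definition of $F$-rationality recalled in the preliminaries, so (2) of the corollary holds.
\item The implication (1) $\implies$ (3) of Theorem \ref{thm:IndependenceIntro} then gives the injectivity of $\delta\circ C_n^{-1}$ for $0<i\leq m$, which is (3) of the corollary.
\end{itemize}

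Converse direction. Assume (1)--(3) of the corollary. By the convention above, $C$ is surjective for $i \leq m$, so Theorem \ref{thm:IndependenceIntro} applies. Conditions (2) and (3) of the corollary are exactly statement (3) of that theorem, so its implication (3) $\implies$ (1) makes $X$ pre-$m$-$F$-rational. Adding condition (1) of the corollary then makes $X$ $m$-$F$-rational.

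Two bookkeeping points remain. First, the integer $n_0$ in (3) is implicitly allowed to depend on the closed point $\m$, as in Theorem \ref{thm:IndependenceIntro}. Since there are finitely many pairs $(i,j)$, taking a maximum over them gives a single $n_0$ for each $\m$. Second, $X$ is affine, so the standing hypothesis of Theorem \ref{thm:IndependenceIntro} is met.

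I expect the only real obstacle to be the implicit surjectivity hypothesis. If one insists that (3) should not presuppose it, then surjectivity of $C$ for $i \leq m$ would have to be deduced from $F$-rationality together with the codimension condition. I do not see how to do this in general, so I would fix the convention explicitly at the start of the proof.
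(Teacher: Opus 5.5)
Your proposal is correct and is essentially the paper's own argument, which is the definition (whose $i=0$ case is $F$-rationality) combined with the equivalence (1)$\iff$(3) of Theorem~\ref{thm:IndependenceIntro}. One slip does not affect your conclusion that $C$ is surjective for $i=0$: $Z\Omega^{[0]}_X$ is the subsheaf of $p$-th powers inside $F_*\cO_X$, not $F_*\cO_X$ itself.

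Reading the surjectivity of $C\colon Z\Omega^{[i]}_X\to\Omega^{[i]}_X$ for $0<i\le m$ as implicit in (3) is the same tacit convention the paper uses, but you need not worry about it, since it can be recovered by induction on $i$. Pre-$(i-1)$-$F$-rationality (for $i=1$ just $F$-rationality) together with $\codim_X{\rm Sing}(X)>2m+1\geq i+1$ makes $B\Omega^{[i]}_X$ satisfy $(S_3)$, and hence $C$ is surjective on $\Omega^{[i]}_X$, exactly as in the proof of Theorem~\ref{thm:higherRationalGivesExtension}.
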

\begin{proof}
This is immediate by definition and Theorem \ref{thm:IndependenceIntro}.
\end{proof}

\subsection{Vanishing of local cohomologies}

In this subsection, we obtain results on the vanishing of local cohomologies $H^j_{\m}(\Omega^i_{X})$ for a pre-$m$-$F$-rational variety $X$. 

First, we verify the following proposition.

\begin{proposition}\label{thm:mCMBvanishing}
Let $X$ be a normal $d$-dimensional {affine} variety defined over a perfect field of characteristic $p>0$. Fix an integer $0 \leq m \leq \codim_{X} {\rm Sing}(X) - 3$ and a closed point $\m \in {\rm Sing}(X)$. Further, assume that for all $0 \leq i \leq m$:
\begin{enumerate}
\item $C \colon Z\Omega^{[i]}_{X} \to \Omega^{[i]}_{X}$ is surjective, \\[-0.9em]
\item the following map is injective  when $i + j \leq d$: \[H^j_\m(\Omega^{[i]}_{X}) \xrightarrow{C^{-1}} H^j_\m\left(\frac{F_{*}\Omega^{[i]}_{X}}{B\Omega^{[i]}_{X}}\right),
\] 
\item $H^j_\m(\Omega^{[i]}_{X}) = 0$ when $i+j<d$.
\end{enumerate}
Then the following vanishing holds
\begin{equation} \label{eq:BVan}
H^j_{\m}(B_n\Omega^{[i]}_{X})=0
\end{equation}
 for all $0 \leq i \leq m+1$ and all $n\geq 1$ when $i+j \leq d$.
\end{proposition}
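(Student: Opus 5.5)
We argue by induction on $i$, and for each fixed $i$ by induction on $n$. For $i=0$ we have $B_n\Omega^{[0]}_X=0$, so the vanishing \eqref{eq:BVan} is trivial. The idea is to move from $i$ to $i+1$ using the reflexive versions of \eqref{eq:ZOmegaB,single}, \eqref{eq:BZOmega,single}, \eqref{eq:BBB} and \eqref{BZZ}. These sequences are exact on the smooth locus $U$. Since $j_*$ is only left exact, we control the failure of right exactness using the codimension assumption $\codim {\rm Sing}(X)\ge m+3$, which gives depth of the reflexive sheaves in low degrees.

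\textbf{Step 1 (the case $n=1$, from $i$ to $i+1$).} Consider $0\to Z\Omega^{[i]}_X\to F_*\Omega^{[i]}_X\xrightarrow{d} B\Omega^{[i+1]}_X$. The image of $d$ agrees with $B\Omega^{[i+1]}_X$ on $U$, and both sheaves are torsion-free. So the cokernel $Q$ is supported on ${\rm Sing}(X)$.

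\emph{Claim:} for $i\le m$ and $j\le d-i-1$,
\[
H^{j+1}_\m(B\Omega^{[i+1]}_X)\hookrightarrow H^{j+2}_\m(Z\Omega^{[i]}_X)
\]
up to contributions from $Q$, which we show vanish in the relevant range.

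It is cleaner to go through $Z$. From $0\to B\Omega^{[i]}_X\to Z\Omega^{[i]}_X\xrightarrow{C}\Omega^{[i]}_X\to 0$, which is exact by hypothesis (1), we get the long exact sequence
\[
H^j_\m(B\Omega^{[i]}_X)\to H^j_\m(Z\Omega^{[i]}_X)\to H^j_\m(\Omega^{[i]}_X)\xrightarrow{\delta'} H^{j+1}_\m(B\Omega^{[i]}_X).
\]
By the diagram in the proof of Theorem \ref{thm:IndependenceIntro}, $\delta'$ factors through $C^{-1}$. By hypotheses (2)--(3) and induction, $H^j_\m(Z\Omega^{[i]}_X)=0$ for $i+j\le d$. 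Indeed, $H^j_\m(B\Omega^{[i]}_X)=0$ by induction. If $i+j<d$ then $H^j_\m(\Omega^{[i]}_X)=0$. If $i+j=d$, the map $H^d_\m(Z)\to H^{d-i}_\m(\Omega^{[i]})$ must be zero whenever $\delta'$ is injective. Injectivity of $\delta'=\delta\circ C^{-1}$ follows from hypothesis (2) combined with $H^j_\m(F_*\Omega^{[i]}_X)=0$ for $i+j<d$. For $j=d-i$ we use the same injectivity through the sequence $F_*\Omega\to F_*\Omega/B$, which requires checking that $H^{d-i}_\m(F_*\Omega^{[i]})\to H^{d-i}_\m(F_*\Omega/B)$ meets the image of $C^{-1}$ trivially. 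This boundary case $j=d-i$ is where we expect the main subtlety.

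Next, the sequence $0\to Z\Omega^{[i]}\to F_*\Omega^{[i]}\to B\Omega^{[i+1]}$ gives $H^{j}_\m(B\Omega^{[i+1]}_X)$ sandwiched between $H^j_\m(F_*\Omega^{[i]}_X)$ (which is zero for $j<d-i$ by (3)) and $H^{j+1}_\m(Z\Omega^{[i]}_X)$ (which is zero for $j+1\le d-i$). The cokernel $Q$ contributes only in degrees $\ge d-\dim{\rm Sing}(X)$ minus a shift. We plan to show that $Q$ actually vanishes, using Remark \ref{rem:Sk}: the vanishing of $H^{\le 1}$ of the $Z$-sheaves along ${\rm Sing}(X)$ makes the pushed-forward sequence right exact, thanks to $\codim{\rm Sing}(X)\ge m+3\ge i+3$. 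This yields $H^j_\m(B\Omega^{[i+1]}_X)=0$ for $(i+1)+j\le d$.

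\textbf{Step 2 (from $n-1$ to $n$).} Apply local cohomology to the reflexive version of \eqref{eq:BBB},
\[
0\to F^{n-1}_*B\Omega^{[i]}_X\to B_n\Omega^{[i]}_X\to B_{n-1}\Omega^{[i]}_X,
\]
after verifying right exactness in the same way, via depth and the codimension bound. The outer terms vanish in the range $i+j\le d$ by Step 1 and induction, and $F^{n-1}_*$ commutes with $H^j_\m$.

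The main obstacle is to control the cokernels of $j_*$ applied to short exact sequences on $U$; these are supported on ${\rm Sing}(X)$. We will handle them uniformly with Lemma \ref{lem:reflexive} and Remark \ref{rem:Sk}, applying the vanishing hypotheses at all points of ${\rm Sing}(X)$ near $\m$ and shrinking $X$ as needed, together with the extremal degree $i+j=d$ in Step 1.
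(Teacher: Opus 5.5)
There is a genuine gap, and it sits exactly at the boundary degree you flagged. You want to conclude $H^{d-i-1}_{\mathfrak{m}}(B\Omega^{[i+1]}_X)=0$ from the sequence $0\to Z\Omega^{[i]}_X\to F_*\Omega^{[i]}_X\to B\Omega^{[i+1]}_X$. For this you use $H^{d-i}_{\mathfrak{m}}(Z\Omega^{[i]}_X)=0$, which you try to derive from injectivity of $\delta'=\delta\circ C^{-1}\colon H^{d-i}_{\mathfrak{m}}(\Omega^{[i]}_X)\to H^{d-i+1}_{\mathfrak{m}}(B\Omega^{[i]}_X)$. That vanishing is false in general, and that injectivity does not follow from (1)--(3).

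The failure already occurs at your first step $i=0$. There $B\Omega^{[0]}_X=0$ and $Z\Omega^{[0]}_X\cong\mathcal{O}_X$ via Frobenius, so $H^{d}_{\mathfrak{m}}(Z\Omega^{[0]}_X)=H^d_{\mathfrak{m}}(\mathcal{O}_X)\neq 0$, and $\delta'$ maps into $0$. Conceptually, injectivity of $\delta\circ C^{-1}$ is an $F$-rationality-type condition. Compare Theorem \ref{thm:IndependenceIntro}(3), which imposes it only for $i>0$ and $n\gg 0$. Hypothesis (2), by contrast, is merely an $F$-injectivity-type condition. So the ``meets the image of $C^{-1}$ trivially'' check you postpone cannot be carried out.

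The missing observation is that you never need the vanishing. You only need injectivity of $H^{d-i}_{\mathfrak{m}}(Z\Omega^{[i]}_X)\to H^{d-i}_{\mathfrak{m}}(F_*\Omega^{[i]}_X)$. Composing further with $F_*\Omega^{[i]}_X\to F_*\Omega^{[i]}_X/B\Omega^{[i]}_X$ gives $C^{-1}\circ C$. Here $C$ is injective on $H^{d-i}_{\mathfrak{m}}$ because $H^{d-i}_{\mathfrak{m}}(B\Omega^{[i]}_X)=0$ by induction, and $C^{-1}$ is injective by (2).

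This is precisely the paper's argument. The paper phrases it with
$\mathcal{C}\coloneqq\mathrm{coker}\bigl(C^{-1}\colon\Omega^{[i]}_X\to F_*\Omega^{[i]}_X/B\Omega^{[i]}_X\bigr)\cong F_*\Omega^{[i]}_X/Z\Omega^{[i]}_X$.
One gets $H^j_{\mathfrak{m}}(\mathcal{C})=0$ for $j<d-i$ at all closed points of ${\rm Sing}(X)$. Since $d-i\ge\dim{\rm Sing}(X)+3$, Remark \ref{rem:Sk} shows $\mathcal{C}$ is $(S_3)$, hence reflexive, hence $\mathcal{C}=B\Omega^{[i+1]}_X$.

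Working with $\mathcal{C}$ rather than with the image of $d$ has two further benefits. It removes the need for your separate argument that the cokernel $Q$ vanishes. It also yields the $(S_3)$ property of $B\Omega^{[i+1]}_X$, which your Step 2 needs for right exactness of the reflexivised \eqref{eq:BBB}. With that in hand, Step 2 is fine as you wrote it.
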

\noindent 
Conditions (1) and (2) in the proposition constitute a reflexive variant of the definition of $m$-$F$-injectivity. Condition (3) may be thought of as higher variant of Cohen--Macaulayness.
\begin{proof} We start by showing the proposition for $n=1$. To this end, by ascending induction on $m$ we may assume that
\begin{equation} \label{eq:BVanAssump}
H^j_{\m}(B\Omega^{[m]}_{X})=0
\end{equation}
when $j \leq d-m$ and aim for showing that
\begin{equation} \label{eq:BVanGoal}
H^j_{\m}(B\Omega^{[m+1]}_{X})=0
\end{equation}
when $j \leq d-m-1$. First, by the long exact sequence of local cohomology for
\[
0\to B\Omega^{[m]}_X\to F_{*}\Omega^{[m]}_X \to \frac{F_{*}\Omega^{[m]}_X}{B\Omega^{[m]}_X} \to 0
\]
together with (3) and (\ref{eq:BVanAssump}) we get that 
\begin{equation} \label{eq:GVan}
H^j_{\m}\left(\frac{F_{*}\Omega^{[m]}_X}{B\Omega^{[m]}_X}\right)=0
\end{equation}
for all $j < d-m$. Next, consider the short exact sequence
\[
0\to \Omega^{[m]}_X\xrightarrow{C^{-1}} \frac{F_{*}\Omega^{[m]}_X}{B\Omega^{[m]}_X} \to \mathcal{C}\to 0,
\]
where $\mathcal{C}$ denotes the cokernel of the first map. By (2) and (\ref{eq:GVan}), we get that
\begin{equation} \label{eq:Cvan}
H^j_{\m}(\mathcal{C})=0
\end{equation}
for all $j< d-m$. Since 
\[
d-m \geq {\rm dim}_X\, {\rm Sing}(X) + 3,
\]
Remark \ref{rem:Sk} shows that $\cC$ satisfies ($S_3$). In particular $\mathcal{C}=B\Omega^{[m+1]}_X$. Consequently, \eqref{eq:Cvan} yields (\ref{eq:BVanGoal}) as required. \\

Next, we verify (\ref{eq:BVan}) for every $n \geq 1$. To this end, consider the short exact sequence
\[
0\to F^{n-1}_{*}B\Omega^{[m+1]}_X\to B_{n}\Omega^{[m+1]}_X \xrightarrow{C} B_{n-1}\Omega^{[m+1]}_X,
\]
which is a reflexivisation of \eqref{eq:BBB}. Since $B\Omega^{[{m+1}]}_X$ satisfies $(S_3)$ as proven above, taking into account that $\mathrm{codim}\,\mathrm{Sing}(X) 
\geq 3$ and so $C\colon B_{n}\Omega^{[m+1]}_X \to B_{n-1}\Omega^{[m+1]}_X$ is surjective at codimension two points, the above  sequence is exact on the right.
Arguing by ascending induction on $n$, we see that the vanishing
\[
H^j_{\m}(B_{n-1}\Omega^{[{m+1}]}_X)=0
\]
for all $j<d-m$ implies the vanishing $H^j_{\m}(B_{n}\Omega^{[{m+1}]}_X)=0$ for all $j < d - m$ as well. This concludes the proof of \eqref{eq:BVan}. \qedhere
\end{proof}

\begin{theorem}\label{thm:vanishing of local cohomologies} 
    Let $X$ be a normal $d$-dimensional {affine} variety defined over a perfect field of characteristic $p>0$ with pre-$m$-$F$-rational singularities for some integer $0 \leq m \leq \codim_{X} {\rm Sing}(X) - 2$.
    Then 
    \begin{equation} \label{eq:vanishing-goal}
    H^j_{\m}(\Omega^{[i]}_{X})=0
    \end{equation}
    for all maximal ideals $\m \in {\rm Sing}(X)$ when $i\leq m$ and $i+j < d$.
\end{theorem}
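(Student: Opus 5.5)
Argue by ascending induction on $m$, using Proposition \ref{thm:mCMBvanishing} to supply vanishing of $H^\bullet_\m(B_n\Omega^{[m]}_X)$ in the relevant range, and then the definition of pre-$m$-$F$-rationality (in the form of Theorem \ref{thm:IndependenceIntro}(3)) to deduce vanishing for $\Omega^{[m]}_X$.

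For $m=0$ the statement is that $H^j_\m(\cO_X)=0$ for $j<d$. This is the classical fact that $F$-rational singularities are Cohen--Macaulay. Indeed, choose a Cartier divisor $D\geq 0$ containing ${\rm Sing}(X)$ with $D$ large enough that $H^j_\m(\cO_X)\to H^j_\m(\cO_X(D))$ is zero for $j<d$. This is possible by Matlis duality, since the dual is supported on ${\rm Sing}(X)$ and hence killed by a power of an equation of $D$. Now $H^j_\m(\cO_X)\xrightarrow{F^n} H^j_\m(F^n_*\cO_X(D))$ factors through $F^n_*$ of this zero map, yet it is injective, so $H^j_\m(\cO_X)=0$.

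For the inductive step, assume $X$ is pre-$m$-$F$-rational with $m\leq \codim {\rm Sing}(X)-2$. Then $X$ is pre-$(m-1)$-$F$-rational, and by induction $H^j_\m(\Omega^{[i]}_X)=0$ for $i\leq m-1$ and $i+j<d$. Now apply Proposition \ref{thm:mCMBvanishing} with $m-1$ in place of $m$. Its hypothesis $m-1\leq \codim{\rm Sing}(X)-3$ holds. Hypothesis (1) is part of the definition. Hypothesis (2) is the $n=1$ case of injectivity: the composite with ${\rm nat}$ is injective for $n\geq n_0$, and to pass to $n=1$ I would note that $C^{-1}_n$ factors through $C^{-1}$, which forces $C^{-1}$ to be injective. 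Hypothesis (3) is the inductive hypothesis. The proposition then gives
\[
H^{j}_\m(B_n\Omega^{[m]}_X)=0 \quad\text{for } m+j\leq d,\ n\geq 1.
\]

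Now take $j<d-m$, so $j+1\leq d-m$. By Theorem \ref{thm:IndependenceIntro}(3) (or Corollary \ref{cor:correctDef}), the composite $\delta\circ C^{-1}_n\colon H^j_\m(\Omega^{[m]}_X)\to H^{j+1}_\m(B_n\Omega^{[m]}_X)$ is injective for $n\gg0$. Its target vanishes, so $H^j_\m(\Omega^{[m]}_X)=0$, which completes the induction.

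The main obstacle is checking that the hypotheses of Proposition \ref{thm:mCMBvanishing} are genuinely available. Its proof uses $(S_3)$ of the cokernel $\mathcal C$ via Remark \ref{rem:Sk}, which needs $\codim {\rm Sing}(X)$ at least $m+2$; this is exactly the assumption here. Two further points need care: the implication from the iterated injectivity to hypothesis (2), and the fact that Theorem \ref{thm:IndependenceIntro} is stated for $m>0$. The alternative fallback is to redo the $\delta_D$ diagram \eqref{eq:independentBigDiagram} directly. In that diagram the top map is zero because $D$ is large, so injectivity of ${\rm nat}\circ C_n^{-1}$ implies injectivity of $\delta\circ C^{-1}_n$ into $H^{j+1}_\m(B_n\Omega^{[m]}_X)$, which is zero.
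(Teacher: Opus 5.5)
Your proposal is correct and follows the same route as the paper. The paper argues by ascending induction on $m$, with the base case being the Cohen--Macaulayness of $F$-rational singularities, then applies Proposition~\ref{thm:mCMBvanishing} with $m-1$ in place of $m$ to get $H^{j}_\m(B_n\Omega^{[m]}_X)=0$ for $j\leq d-m$, and concludes from the injectivity of $\delta\circ C_n^{-1}$ (Theorem~\ref{thm:IndependenceIntro}(3)) into this vanishing target. Your observation that $C_n^{-1}$ factors through $C^{-1}$, so that hypothesis (2) of the proposition follows from the injectivity for $n\gg 0$, correctly fills in a step the paper leaves implicit.
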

\begin{proof}
Since $X$ is $F$-rational, it is Cohen--Macaulay. By ascending induction on $m$ we may assume that (\ref{eq:vanishing-goal}) holds when $i \leq m-1$ and $i+j < d$. Since pre-$(m-1)$-$F$-rationality implies (1) and (2) in Proposition \ref{thm:mCMBvanishing} (applied with $m$ replaced by $m-1$), we get that
\begin{equation} \label{eq:anotherBvan}
H^j_{\m}(B_n\Omega^{[m]}_{X})=0
\end{equation}
for all $n\geq 1$ when $j \leq d - m$.

On the other hand, Definition (3) of pre-$m$-$F$-rationality from Theorem \ref{thm:IndependenceIntro} stipulates that the composition
\[
H^j_\m(\Omega^{[m]}_{X}) \xrightarrow{C_n^{-1}} H^j_\m\left(\frac{F^n_{*}\Omega^{[m]}_{X}}{B_n\Omega^{[m]}_{X}}\right) \xrightarrow{\delta} H^{j+1}_\m(B_n\Omega^{[m]}_X) 
\]
is injective when $j \leq d-m$ and $n \gg 0$. This combined with \eqref{eq:anotherBvan} concludes the vanishing \eqref{eq:vanishing-goal} for $i=m$ and $i+j<d$. \qedhere
\end{proof}

\begin{remark} \label{remark:ZZeroDef}
The above results lead to another definition of $m$-$F$-rationality. Namely, let $X$ be a $d$-dimensional normal affine variety over a perfect field $k$ of characteristic $p>0$. Then $X$ is $m$-$F$-rational if and only if 
 \begin{enumerate}
     \item $\mathrm{codim}_{X}\, {\rm Sing}(X) > 2m+1$, \\[-0.8em]
     \item $X$ is $F$-rational, and \\[-0.8em] 
     \item there exists an integer $n_0>0$ such that
\[
H^j_\m(Z_n\Omega^{[i]}_{X}) = 0
\]
 for all $i, j, n$ satisfying $0 < i \leq m$,  $i+j \leq d$, and $n \geq n_0$.
 \end{enumerate}
This can be easily deduced from Theorem \ref{thm:IndependenceIntro} (4), Theorem \ref{thm:vanishing of local cohomologies}, and Proposition \ref{thm:mCMBvanishing} by way of the short exact sequence
\[
0 \to B_n\Omega^{[i]}_X \to Z_n\Omega^{[i]}_X \to \Omega^{[i]}_X \to 0.
\]
\end{remark}

Next, we give a criterion for checking whether a singularity is $m$-$F$-rational that is often easy to apply in practice. Let us point out that the last assumption is very restrictive and most $m$-$F$-rational singularities do not satisfy it.
\begin{proposition}\label{prop: prop for examples}
    Let $X$ be a normal $d$-dimensional affine variety defined over a perfect field of characteristic $p>0$. 
    Suppose that
\begin{enumerate}
\item $X$ is $F$-rational, 
\item  $C \colon Z\Omega^{[i]}_{X} \to \Omega^{[i]}_{X}$ is surjective, \\[-0.9em]
\item $H^j_\m(\Omega^{[i]}_{X}) = 0$ for all maximal ideal $\m$ when $0 <  i\leq m$ and $i+j\leq d$.
\end{enumerate}
Then $X$ is pre-$m$-$F$-rational.
\end{proposition}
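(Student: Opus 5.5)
We verify criterion (3) of Theorem \ref{thm:IndependenceIntro}. Since (2) holds for all $i\le m$ (implicitly the hypothesis is for $i\le m$), the theorem applies, and $X$ is $F$-rational by (1). So it suffices to show that for every closed point $\m$ and all $0<i\le m$, $i+j\le d$, $n\ge 1$, the composition
\[
H^j_\m(\Omega^{[i]}_{X}) \xrightarrow{C_n^{-1}} H^j_\m\left(\frac{F^n_{*}\Omega^{[i]}_{X}}{B_n\Omega^{[i]}_{X}}\right) \xrightarrow{\delta} H^{j+1}_\m(B_n\Omega^{[i]}_X)
\]
is injective. We may equally verify criterion (4), which requires $C_n\colon H^j_\m(Z_n\Omega^{[i]}_X)\to H^j_\m(\Omega^{[i]}_X)$ to be zero for these indices. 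By hypothesis (3), the target $H^j_\m(\Omega^{[i]}_X)$ vanishes for $0<i\le m$ and $i+j\le d$. Hence $C_n$ is a map into the zero group and is zero for every $n\ge 1$, so we may take $n_0=1$.

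The implication (4) $\Rightarrow$ (1) in Theorem \ref{thm:IndependenceIntro} then gives pre-$m$-$F$-rationality. The case $i=0$ is covered by $F$-rationality, which is built into (4). I should also check that the theorem's standing hypothesis, surjectivity of $C$ for all $i\le m$, is exactly assumption (2), and that affineness holds, which it does.

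I see no real obstacle; the proof is essentially a direct application of the equivalence already proved. The one point to handle carefully is the index range: (4) needs $i+j\le d$ including $j=d-i$, and hypothesis (3) gives vanishing on that full range, including top-degree local cohomology. This is precisely why the proposition is so restrictive, as the preceding remark notes.
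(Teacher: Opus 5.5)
Your proof is correct and is essentially the paper's argument. The paper invokes the implication (3)$\Rightarrow$(1) of Theorem~\ref{thm:IndependenceIntro}, where $\delta\circ C_n^{-1}$ is injective because its source $H^j_\m(\Omega^{[i]}_X)$ vanishes; you instead pass through the equivalent criterion (4), where $C_n$ is zero because its target vanishes, and both are immediate from hypothesis (3) on the full range $0<i\le m$, $i+j\le d$.
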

\begin{proof}
    The assertion follows from Theorem \ref{thm:IndependenceIntro} (3)$\Rightarrow$(1).
\end{proof}

Thanks to Theorem \ref{thm:pre-k-F-rational type to pre-$k$-rational}, we can construct plenty of examples of $m$-$F$-rational singularities by starting with an $m$-rational singularity in characteristic $0$ and reducing modulo $p \gg 0$. Below we give some explicit examples that work in low characteristics as well.

\begin{example} \label{example1}
    Let $X$ be a normal surface with $F$-rational and $F$-pure singularities defined over a perfect field of characteristic $p>0$. We claim that $X$ is pre-$m$-$F$-rational for every $m \geq 0$.  First, we note that $X$ has rational singularities since $F$-rationality implies pseudo-rationality and the Grauert–Riemenschneider vanishing holds for surfaces. 
    Then $X$ is $F$-pure and rational, and so
    \[
    C \colon Z\Omega^{[1]}_X\to \Omega^{[1]}_X
    \]
     is surjective by  \cite[Theorem B]{Kawakami-Takamatsu} and \cite[Proposition 4.4]{Kaw4}. Moreover, the trace map of Frobenius $F_{*}\omega_X\to \omega_X$  is surjective by $F$-purity. Then the claim follows from the above proposition given that 
    \[
    H^j_\m(\Omega^{[i]}_{X})=0
    \]
    for $(i,j) \in \{ (1,0), (1,1), (2,0)\}$ as $\Omega^{[i]}_{X}$ is reflexive by definition.
\end{example}

\begin{example} \label{example2}
    Let $X$ be a simplicial toric variety of dimension $d$ defined over a perfect field of characteristic $p>0$. We claim that $X$ is pre-$m$-rational for all $m\geq 0$.
    To this end, since $X$ is $F$-rational, we may assume that $m\geq 1$.
    Since $X$ is simplicial, we have an exact sequence
    \[
    0\to \Omega^{[m]}_X\to K^0\to K^1\to \cdots \to K^m \to 0,
    \]
    where $K^i$ is a direct sum of structure sheaves of toric varieties of dim $d-i$ (see \cite[Theorem 8.2.19]{CLS11}). Since toric varieties are Cohen--Macaulay, we can deduce from this exact sequence that 
    \[
    H^j_{\m}(\Omega^{[m]}_X)=0
    \]
    for all $j < d$. 
    Since $X$ is Frobenius liftable, 
    $C \colon Z\Omega^{[m]}_{X} \to \Omega^{[m]}_{X}$ is surjective (see \cite[Remark 2.7 (3) and Lemma 3.8]{Kaw4}).
    Thus, by Proposition \ref{prop: prop for examples}, we conclude that $X$ is pre-$m$-$F$-rational.
\end{example}

\begin{example} \label{example3} 
     Let $x \in X$ be a linearly reductive quotient singularity over an algebraically closed field of characteristic $p>0$,
    that is, there exists a faithful action of a finite group scheme $G$ on $\Spec k\llbracket x_1,\ldots,x_d\rrbracket$ fixing the closed point such that $\sO_{X,x}^{\wedge}\cong k\llbracket x_1,\ldots,x_d\rrbracket^{G}$.
    We claim that $x\in X$ is pre-$m$-rational for all $m\geq 0$.
    To this end, since $x\in X$ is Frobenius liftable \cite[Theorem 2.12]{Kaw4}, it suffices to show that
    \[
    H^j_{\m_x}(\Omega^{[i]}_X)=0
    \]
    for all $i+j\leq d$.
    By Artin approximation \cite[Corollary 2.6]{Artin(approximation)}, there exist \'etale morphisms $\pi\colon Z\to X$ and $\pi'\colon Z\to \mathbb{A}^d_k/G$ and a closed point $z\in Z$ such that $\pi(z)=x$ and $\pi'(z)=o$, where $o\in \mathbb{A}^d_k/G$ is the origin.
    Then it suffices to show that 
    \[
    H^j_{\m_o}(\Omega^{[i]}_{\mathbb{A}_{k}^{d}/G})=0
    \]
    for all $i+j\leq d$.
    Consider an isomorphism $\mathbb{A}_{k}^{d}/G\cong (\mathbb{A}_{k}^{d}/G^{\circ})/(G/G^{\circ})$, where $G^{\circ}$ is the connected component containing the identity. Then $\mathbb{A}_{k}^{d}/G^{\circ}$ is a normal toric variety and $G/G^{\circ}$ is a finite group scheme order prime to $p$ (see \cite[Theorem 2.12]{Kaw4} for more details).
    Since $\mathbb{A}_{k}^{d}\to \mathbb{A}_{k}^{d}/G^{\circ}$ is finite, the quotient $\mathbb{A}_{k}^{d}/G^{\circ}$ is $\Q$-factorial.
    Then $\Omega^{[i]}_{\mathbb{A}_{k}^{d}/G}$ is a direct summand of $\Omega^{[i]}_{\mathbb{A}_{k}^{d}/G^{\circ}}$, which satisfies the desired vanishing, as proven in the previous example.
\end{example}

Next, we generalise \cite[Theorem D]{Kaw4} to the setting of $m$-$F$-rationality.

\begin{theorem} \label{thm:higherRationalGivesExtension}
Let $X$ be a normal variety over a perfect field of characteristic $p>0$ with pre-$m$-$F$-rational singularities. 
Suppose that $m \leq \codim_{X} {\rm Sing}(X)-3$.
Then the natural restriction map
\[
f_{*}\Omega^{[m+1]}_Y(\log E)\hookrightarrow\Omega^{[m+1]}_X
\]
is surjective for every proper birational morphism $f\colon Y\to X$ with reduced exceptional divisor $E$.
\end{theorem}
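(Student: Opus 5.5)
The plan is to reduce the extension statement to surjectivity of the reflexive Cartier operator in degree $m+1$ and then invoke Theorem \ref{thm:LET}(1). Concretely, I will show that
\[
C \colon Z\Omega^{[m+1]}_X \to \Omega^{[m+1]}_X
\]
is surjective. Theorem \ref{thm:LET}(1), applied with $m$ replaced by $m+1$, then gives $f_*\Omega^{[m+1]}_Y(\log E)\cong \Omega^{[m+1]}_X$ for every proper birational $f$. Since surjectivity of $C$ is local, we may assume $X$ is affine.

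\textbf{Step 1: the exact sequence.} Let $\cC$ be the cokernel of
\[
C^{-1}\colon \Omega^{[m]}_X \to \frac{F_*\Omega^{[m]}_X}{B\Omega^{[m]}_X}.
\]
On the smooth locus $U$ the Cartier isomorphism and $d$ identify $\cC|_U$ with $B\Omega^{m+1}_U$. The proof of Proposition \ref{thm:mCMBvanishing} shows that $\cC$ is $(S_3)$, hence $\cC = B\Omega^{[m+1]}_X$. For this we need hypotheses (1)--(3) of that proposition for $i\le m$:
\begin{itemize}
\item (1) is part of pre-$m$-$F$-rationality;
\item (3) is Theorem \ref{thm:vanishing of local cohomologies}, which applies since $m\le \codim {\rm Sing}(X)-3$;
\item (2) holds because the composition with ${\rm nat}$ for $n=1$ is injective by (2) of the definition. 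Indeed, if $C_n^{-1}$ composed with the further maps is injective for $n\ge n_0$, then $C^{-1}$ itself is injective once we factor $C_n^{-1}$ through $C^{-1}$ via $C_{n-1}^{-1}$ pushed forward. More simply, (3) gives $H^j_\m(\Omega^{[m]}_X)=0$ for $j<d-m$, and for $j=d-m$ injectivity is needed only at that one degree, which follows from the same factorisation.
\end{itemize}
So Proposition \ref{thm:mCMBvanishing} applies and gives $H^j_\m(B_n\Omega^{[m+1]}_X)=0$ for $j\le d-m-1$ and all $n$.

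\textbf{Step 2: surjectivity of $C$.} Consider the sequence
\[
0\to B\Omega^{[m+1]}_X \to Z\Omega^{[m+1]}_X \xrightarrow{C} \Omega^{[m+1]}_X \to Q\to 0,
\]
where $Q$ is the cokernel of $C$. Since $Q$ is supported on ${\rm Sing}(X)$, it suffices to show $Q=0$. Let $\mathcal{I}$ be the image of $C$; it is torsion-free and agrees with $\Omega^{[m+1]}_X$ off ${\rm Sing}(X)$. The vanishing $H^j_\m(B\Omega^{[m+1]}_X)=0$ for $j\le d-m-1$, combined with the reflexivity of $Z\Omega^{[m+1]}_X$, gives $H^0_\m(\mathcal I)=H^1_\m(\mathcal I)=0$ for all $\m$ in ${\rm Sing}(X)$. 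This only needs $H^2_\m(B)=0$, which holds since $d-m-1\ge \dim{\rm Sing}(X)+2$. Running this at every point of ${\rm Sing}(X)$ via Lemma \ref{lem:reflexive}/Remark \ref{rem:Sk}, $\mathcal I$ is reflexive. A reflexive subsheaf agreeing with the reflexive sheaf $\Omega^{[m+1]}_X$ in codimension one equals it, so $Q=0$.

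\textbf{Main obstacle.} The delicate point is the depth bookkeeping: checking that $H^2_\m(B\Omega^{[m+1]}_X)=0$ is available at all points of ${\rm Sing}(X)$, including non-closed ones via Lemma \ref{lem:reflexive}. This is exactly where $m\le \codim {\rm Sing}(X)-3$ enters, since it ensures $d-m-1\ge \dim {\rm Sing}(X)+2$, and the vanishing must hold at generic closed points of each component.

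Alternatively, Step 2 could be replaced by a resolution argument. Take a log resolution $\pi$ and use Lemma \ref{lemma:Kawakami-vanishing}(3) with $m+1$; this would require $R^{>0}\pi_*\Omega^i_Y(\log E)=0$ for $i\le m$, which we do not know in positive characteristic. So I will use the local cohomology route.
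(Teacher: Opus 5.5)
Your proposal is correct and is essentially the paper's proof: the paper also gets that $B\Omega^{[m+1]}_X$ satisfies $(S_3)$ from Proposition~\ref{thm:mCMBvanishing} and Theorem~\ref{thm:vanishing of local cohomologies}, deduces that $C\colon Z\Omega^{[m+1]}_X\to\Omega^{[m+1]}_X$ is surjective, and then applies Theorem~\ref{thm:LET}(1). Your Step 2, and your factorisation of $C_n^{-1}$ through $C^{-1}$ to check hypothesis (2) of the proposition, spell out details the paper leaves implicit; the depth argument for the image of $C$ is cleanest after localising at each $\p\in\mathrm{Sing}(X)$, where $(S_3)$ of $B\Omega^{[m+1]}_X$ together with $\dim\mathcal{O}_{X,\p}\ge 3$ gives $H^2_\p(B\Omega^{[m+1]}_X)=0$.
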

\begin{proof}
    As proven in the above theorems (or by combining the statements of Proposition \ref{thm:mCMBvanishing}, Theorem \ref{thm:vanishing of local cohomologies}, and Remark \ref{rem:Sk}), $B\Omega^{[m+1]}_X$ satisfies $(S_3)$, and therefore
    \[
    0\to B\Omega^{[m+1]}_X \to Z\Omega^{[m+1]}_X \to \Omega^{[m+1]}_X
    \]
    is exact on the right. Then the assertion holds by Theorem \ref{thm:LET}.
\end{proof}
Note that in the above theorem we only need to assume the hypotheses of Proposition \ref{thm:mCMBvanishing} as opposed to full pre-$m$-$F$-rationality.

\section{$m$-$F$-rational and $m$-rational singularities}

\subsection{$m$-$F$-rational implies $m$-rational} \label{ss:towards-proof-1-iso}
In this subsection, we show that if a reduction of a characteristic $0$ singularity $X$ modulo $p\gg 0$ is pre-$m$-$F$-rational, then $X$ is pre-$m$-rational assuming that $m < {\rm codim}\, {\rm Sing}(X)$. 

We start by proving the following theorem.  

\begin{theorem}\label{thm:main-component-to-char-0}
Let $X$ be a normal  $d$-dimensional affine variety over a perfect field $k$ of characteristic $p>0$ with pre-$m$-$F$-rational singularities, and let $\pi\colon Y\to X$ be a log resolution with reduced exceptional divisor $E$ such that 
\[
R^{>0}\pi_{*}\Omega^i_Y(\log E)= 0
\]
for all $i\leq m-1$.
Then we have $\pi_{*}\Omega^{i}_Y(\log E)\cong \Omega^{[i]}_X$ for all $i\leq m$, and the map
\[
\pi^* \colon H^j_{\m}(\Omega^{[i]}_X)\to H^j_{\m}(R\pi_{*}\Omega^{i}_Y(\log E))
\]
is injective for every maximal ideal $\m$ and all integers $i, j$ when $i\leq m$ and $i + j \leq d$.
\end{theorem}
\begin{proof}
It is enough to verify the statement for $i=m$ and $j \leq d-m$. 
By Lemma \ref{thm:LET}(1), we have that $\pi_*\Omega^m_Y(\log E) = \Omega^{[m]}_X$. The rest of the theorem follows immediately from the existence and commutativity of the following diagram given Definition (3) of pre-$m$-$F$-rationality from Theorem \ref{thm:IndependenceIntro}:
\[
\begin{tikzcd}[column sep = large]
H^j_\m(\Omega^{[m]}_X) \ar{r}{C_n^{-1}} \ar{d}{\pi^*} & H^j_\m\left(\frac{F^n_*\Omega^{[m]}_X}{B_n\Omega^{[m]}_X}\right) \ar{d}{\pi^*} \ar{r}{\delta} & H^{j+1}_\m(B_n\Omega^{[m]}_X) \ar{d}{\pi^*}[swap]{=} \\
H^j_\m(R\pi_*\Omega^{m}_Y(\log E)) \ar{r}{C_n^{-1}} & H^j_\m\left(R\pi_*\frac{F^n_*\Omega^{m}_Y(\log E)}{B_n\Omega^{m}_Y(\log E)}\right) \ar{r}{R\pi_*\delta_Y} & H^{j+1}_\m(R\pi_*B_n\Omega^{m}_Y(\log E)).
\end{tikzcd}
\]

To construct this diagram, we first note that the existence of the left vertical map follows from $\pi_*\Omega^m_Y(\log E) = \Omega^{[m]}_X$. Similarly, Lemma \ref{thm:LET}(2) yields $\pi_*B_n\Omega^m_Y(\log E) = B_n\Omega^{[m]}_X$, which ensures that the right vertical arrow exists. Together, these results imply the existence of the middle vertical arrow. Finally, Lemma \ref{lemma:Kawakami-vanishing}(2) asserts that the rightmost vertical arrow is an equality.

The commutativity of the left square follows from the functoriality of $C_n^{-1}$. The commutativity of the right square follows from the long exact sequence of local cohomology associated to the following diagram:
\[
\begin{tikzcd}
    0 \ar{r} & B_n\Omega^{[m]}_X \ar{d}{\pi^*} \ar{r} & F^n_*\Omega^{[m]}_X \ar{d}{\pi^*} \ar{r} & \frac{F^n_*\Omega^{[m]}_X}{B_n\Omega^{[m]}_X} \ar{d}{\pi^*} \ar{r} & 0 \\
    0 \ar{r} & R\pi_*B_n\Omega^{m}_Y(\log E) \ar{r} & R\pi_*F^n_*\Omega^{m}_Y(\log E) \ar{r} & R\pi_*\frac{F^n_*\Omega^{m}_Y(\log E)}{B_n\Omega^{m}_Y(\log E)} \ar{r} & 0.
\end{tikzcd}
\]
This concludes the proof of the theorem. \qedhere
\end{proof}

In what follows, we explain how to use the above result and reduction modulo $p \gg 0$ to show that singularities of pre-$m$-$F$-rational type are pre-$m$-rational.

\begin{theorem}\label{thm:k-F-rational to k-rational}
    Fix an integer $m \geq 0$ and let $X$ be a normal affine variety over a field $K$ of characteristic zero satisfying $\codim_X {\rm Sing}(X) >m$. Let $X_A$ be a flat model of $X$ over a finitely generated $\bZ$-subalgebra $A \subseteq K$. 

    Suppose there exists a Zariski dense set of closed points $S\subseteq \Spec A$ such that 
    $X_s$ is pre-$m$-$F$-rational for every $s\in S$. 
    Then $X$ is pre-$m$-rational.
\end{theorem}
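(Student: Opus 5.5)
We argue by induction on $m$, combining Theorem \ref{thm:main-component-to-char-0} in characteristic $p$ with the reduction results of the previous section. Fix a log resolution $\pi\colon Y\to X$ with reduced exceptional divisor $E$; we may take it to be strong. Spread $\pi$ out to a model $\pi_A\colon Y_A\to X_A$, shrinking $\Spec A$ so that for every closed point $s$ the map $\pi_s$ is a log resolution of $X_s$ with snc exceptional divisor $E_s$. We also shrink so that Corollary \ref{cor:reduction(local-coh)-Omega} applies for all of the finitely many pairs $(i,j)$ with $i\le m$ and $i+j\le d$. Since $S$ is Zariski dense, it meets every nonempty open subset. The base case $m=0$ is the classical statement that $F$-rational type implies rational, which the argument below also recovers.

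\textbf{Induction step.} Assume, by induction, that $X$ is pre-$(m-1)$-rational; this applies since pre-$m$-$F$-rationality implies pre-$(m-1)$-$F$-rationality. Since $\codim \mathrm{Sing}(X)>m$, Proposition \ref{prop:characterization of pre-k-rational} gives $R^{>0}\pi_*\Omega^i_Y(\log E)=0$ for $i\le m-1$. We transfer this vanishing to $X_s$ for $s$ in an open set. The fibres $R^q\pi_{A,*}\Omega^i_{Y_A/A}(\log E_A)$ are coherent, and over $K$ they vanish for $q>0$. After shrinking $\Spec A$, generic freeness makes them zero, and then Lemma \ref{lem:Hara's lemma} gives $R^{>0}\pi_{s,*}\Omega^i_{Y_s}(\log E_s)=0$ for $i\le m-1$.

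\textbf{Conclusion in characteristic $p$.} Pick $s\in S$ in this open set. Theorem \ref{thm:main-component-to-char-0} then yields $\pi_{s,*}\Omega^i_{Y_s}(\log E_s)\cong \Omega^{[i]}_{X_s}$ for $i\le m$, and injectivity of
\[
H^j_\n(\pi_{s,*}\Omega^i_{Y_s}(\log E_s))\to H^j_\n(R\pi_{s,*}\Omega^i_{Y_s}(\log E_s))
\]
for all $\n$, $i\le m$ and $i+j\le d$. By the moreover part of Corollary \ref{cor:reduction(local-coh)-Omega}, $\pi_*\Omega^i_Y(\log E)=\Omega^{[i]}_X$ for $i\le m$. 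By its main part, the corresponding map $H^j_\m(\Omega^{[i]}_X)\to H^j_\m(R\pi_*\Omega^i_Y(\log E))$ is injective for all closed $\m$ and all $i\le m$, $i+j\le d$.

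\textbf{Back to characteristic zero.} It remains to replace $\Omega^{[i]}_X$ by $\Omega^i_{X,h}$ and apply Theorem \ref{thm:characterization of pre-k-rational}. Here we use that $\Omega^i_{X,h}\to\pi_*\Omega^i_Y(\log E)$ factors through $\Omega^{[i]}_X$, since $\Omega^i_{X,h}$ is torsion-free and embeds into $\pi_*\Omega^i_Y\subseteq \pi_*\Omega^i_Y(\log E)$. We already have $\pi_*\Omega^i_Y(\log E)=\Omega^{[i]}_X$, and so the main obstacle is to show $\Omega^i_{X,h}=\Omega^{[i]}_X$ for $i\le m$. Without this, injectivity on $H^j_\m(\Omega^{[i]}_X)$ need not pass to $H^j_\m(\Omega^i_{X,h})$ through a non-isomorphic inclusion.

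I would first try the following. By the previous step, $\Omega^{[i]}_X=\pi_*\Omega^i_Y(\log E)$. Moreover, $\pi_*\Omega^i_Y\subseteq\Omega^i_{X,h}$ by the known description of $\cH^0$ of the Du Bois complex via $h$-differentials. So one needs $\pi_*\Omega^i_Y=\pi_*\Omega^i_Y(\log E)$. That equality should follow from the reduction of the extension property combined with the inductive pre-$(m-1)$-rationality, as in the characteristic-zero results of \cite{Kawakami-Witaszek(ch=0)}. Failing that, I would instead verify the vanishing criterion of Proposition \ref{prop:characterization of pre-k-rational} directly. The plan there is to deduce $R^{>0}\pi_*\Omega^m_Y(\log E)=0$ from the injectivity just obtained, together with the depth vanishing of Theorem \ref{thm:vanishing of local cohomologies} (transferred to characteristic zero) and local duality.
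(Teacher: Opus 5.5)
Your reduction steps agree with the paper's proof. You transfer $R^{>0}\pi_*\Omega^i_Y(\log E)=0$ for $i\le m-1$ to the fibres, apply Theorem \ref{thm:main-component-to-char-0} at a point of $S\cap U$, and lift the injectivity back with Corollary \ref{cor:reduction(local-coh)-Omega}. The gap is the step you yourself flagged and left open: identifying $\Omega^i_{X,h}$ with $\Omega^{[i]}_X$, so that Theorem \ref{thm:characterization of pre-k-rational} applies.

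Neither of your two routes closes this gap.
\begin{itemize}
\item Your first route needs $\pi_*\Omega^i_Y\subseteq\Omega^i_{X,h}$, which is false in general. The inclusion that holds for every normal $X$ is the reverse one, $\Omega^i_{X,h}\subseteq\pi_*\Omega^i_Y$, which you state a paragraph earlier. Together the two would force $\Omega^i_{X,h}=\pi_*\Omega^i_Y$ always. This fails already for the cone over an elliptic curve with $i=1$: a $1$-form on $Y$ pulled back from $E$ does not vanish on $E$, so it is not an $h$-form.
\item Also in the first route, reducing the extension property from characteristic $p$ can only give you the logarithmic statement $\pi_*\Omega^i_Y(\log E)=\Omega^{[i]}_X$. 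It never gives the non-log equality $\pi_*\Omega^i_Y=\pi_*\Omega^i_Y(\log E)$.
\item Your fallback route is only a plan. The depth vanishing of Theorem \ref{thm:vanishing of local cohomologies} that it invokes needs $m\le\codim_X\mathrm{Sing}(X)-2$, and the hypothesis $\codim_X\mathrm{Sing}(X)>m$ does not give you that.
\end{itemize}

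The missing idea is already inside your induction. In the induction step $m\ge 1$, so the case $m=0$ (Smith's theorem) tells you that $X$ is pre-$0$-rational, i.e.\ has rational singularities. For rational singularities, the Kebekus--Schnell extension theorem \cite{KS21} gives $\pi_*\Omega^i_Y=\Omega^{[i]}_X$. Combined with the agreement of $h$-differentials with reflexive differentials on such singularities, this yields $\Omega^i_{X,h}=\pi_*\Omega^i_Y(\log E)=\Omega^{[i]}_X$ for all $i$. This is exactly what the paper uses, citing \cite[Proposition 2.29]{Kawakami-Witaszek(ch=0)}. With this identification, your injectivity of $H^j_\m(\Omega^{[i]}_X)\to H^j_\m(R\pi_*\Omega^i_Y(\log E))$ is literally the criterion of Theorem \ref{thm:characterization of pre-k-rational}, and the proof closes as in the paper. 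In the base case $m=0$ the issue does not arise, since $\Omega^0_{X,h}=\cO_X$ for normal $X$.
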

\begin{proof}
We prove the assertion by ascending induction on $m$.
The case $m=0$ has been proven by Smith \cite{Smith(rational)}, and thus we may assume that $m\geq 1$.
Let $\pi\colon Y\to X$ be a strong log resolution.
Since we can assume that $X$ is pre-$(m-1)$-rational, Proposition \ref{prop:characterization of pre-k-rational} yields 
\[
    R^{>0}\pi_{*}\Omega^i_Y(\log E)=0
\] for all $i\leq m-1$, and 
\[
\Omega^i_{X,h}=\pi_{*}\Omega^i_Y(\log E)=\Omega^{[i]}_X
\]
for all $i\geq 0$ (see \cite[Proposition 2.29]{Kawakami-Witaszek(ch=0)}).

In particular, by Lemma \ref{lem:Hara's lemma} and Corollary \ref{cor:reduction(local-coh)-Omega},  we can replace $\Spec A$ by an affine open subset so that
\begin{equation}\label{eq:(k-1)-rational assumption}
R^{>0}\pi_{s,*}\Omega^i_{Y_s}(\log E_s)=0
\end{equation}
for all $i\leq m-1$, and
\[
\pi_{s,*}\Omega^i_{Y_s}(\log E_s)=\Omega^{[i]}_{X_s}
\]
for every closed point $s \in S$.

Restricting ourselves to integers $i,j \geq 0$ such that $i \leq m$ and $i+j \leq d$, Corollary \ref{cor:reduction(local-coh)-Omega} implies that there exists an open subset $U \subseteq \Spec A$ such that the map
     \begin{equation} \label{eq:rationalDesiredInjective}
      \pi^* \colon H^j_\m(\Omega^{[i]}_X) \to H^j_\m(R\pi_{*}\Omega^i_Y(\log E))
     \end{equation}
     is injective  for every closed point $\m \in X$ if and only if there exists a closed point $s \in U$ such that the map
     \[
     \pi^*_{s} \colon H^j_\n(\Omega^{[i]}_{X_s}) \to H^j_\n(R\pi_{s,*}\Omega^i_{Y_s}(\log E_s))
     \]
     is injective for every closed point $\n \in X_s$. 
     
     Since there exists at least one closed point $s \in S \cap U \neq \emptyset$, the latter condition is satisfied when $i \leq m$ and $i+j \leq d$ by Theorem \ref{thm:main-component-to-char-0} in view of (\ref{eq:(k-1)-rational assumption}) and $X_s$ being pre-$m$-$F$-rational. Hence the former condition (\ref{eq:rationalDesiredInjective}) is satisfied when $i \leq m$ and $i+j \leq d$, which is equivalent to $X$ being pre-$m$-rational by Theorem \ref{thm:characterization of pre-k-rational}. This concludes the proof. \qedhere
\end{proof}

\subsection{$m$-rational implies $m$-$F$-rational}
In this subsection, we show that the reduction of an $m$-rational singularity modulo every prime number $p \gg 0$ is $m$-$F$-rational. 

We start with the following lemma.
\begin{lemma} \label{lem:CSurj}
     Let $X$ be a normal affine $d$-dimensional variety over a perfect field of characteristic $p>d$, and let $\pi\colon Y\to X$ be a log resolution with the reduced exceptional divisor $E$ supporting a $\pi$-ample $\bQ$-divisor $A$ such that $\rup{A}=0$.
    Suppose:
    \begin{enumerate}
        \item $R^{d-i}\pi_*\Omega^i_Y(\log E)(p^lA)=0$ for all $l>0$ and $i < d$, and \\[-0.8em]
        \item the snc pair $(Y,E)$ lifts to $W_2(k)$.
    \end{enumerate}
    Then the map 
    \[
    R^j\pi_{*}Z_{n}\Omega^i_Y(\log E)(p^nA)
        \xrightarrow{C_n} 
        R^j\pi_{*}\Omega^i_Y(\log E)(A)
    \]
        is surjective for all $i,j,n$ such that $i+j \geq d$ and $n>0$.
\end{lemma}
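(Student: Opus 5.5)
The plan is to reduce the surjectivity of $C_n$ on higher direct images to the vanishing of higher direct images of boundary sheaves, and to obtain that vanishing from a Deligne--Illusie type decomposition of the twisted log de Rham complex together with hypothesis (1).

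First I would treat the case $n=1$. Because $(Y,E)$ lifts to $W_2(k)$ and $p>d$, the twisted log de Rham complex decomposes in the style of Deligne--Illusie and Hara, as in \cite[Lemma 3.3]{Hara98} and \cite{KTTWYY1}:
\[
F_*\Omega^\bullet_Y(\log E)(pA)\simeq \bigoplus_{a}\Omega^a_Y(\log E)(A)[-a]
\]
in $D(\cO_Y)$, compatibly with the Cartier isomorphism on cohomology sheaves. Let $Q$ denote the complex $F_*\Omega^0_Y(\log E)(pA)\to\cdots\to F_*\Omega^{i-1}_Y(\log E)(pA)\to Z\Omega^i_Y(\log E)(pA)$ in degrees $0,\dots,i$; it is quasi-isomorphic to $\tau^{\le i}F_*\Omega^\bullet_Y(\log E)(pA)$. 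The degree-$i$ summand gives a map $\Omega^i_Y(\log E)(A)[-i]\to Q$. Composing it with the projection of $Q$ onto its last term, $Z\Omega^i_Y(\log E)(pA)[-i]$, and then with $C[-i]$ gives the identity of $\Omega^i_Y(\log E)(A)[-i]$, since $C$ induces the Cartier isomorphism on $\cH^i$. Hence $C\colon Z\Omega^i_Y(\log E)(pA)\to\Omega^i_Y(\log E)(A)$ is split surjective in $D(\cO_Y)$, and $R^j\pi_*C$ is surjective for every $j$. For $n=1$ this needs neither hypothesis (1) nor $i+j\ge d$.

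For $n\ge 2$ I would use \eqref{BZZ}, which says that $C_n$ factors as
\[
Z_n\Omega^i_Y(\log E)(p^nA)\xrightarrow{C}Z_{n-1}\Omega^i_Y(\log E)(p^{n-1}A)\xrightarrow{C_{n-1}}\Omega^i_Y(\log E)(A).
\]
By induction on $n$, applied with the same $A$, the second map is surjective on $R^j\pi_*$. The first map has kernel $F^{n-1}_*B\Omega^i_Y(\log E)(p^nA)$, so it is surjective on $R^j\pi_*$ as soon as
\[
R^{j+1}\pi_*B\Omega^i_Y(\log E)(p^nA)=0\qquad\text{for } i+j\ge d .
\]

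To prove this vanishing I would use \eqref{eq:ZOmegaB,single} for $i-1$ with twist $p^nA=p\cdot(p^{n-1}A)$:
\[
0\to Z\Omega^{i-1}_Y(\log E)(p^nA)\to F_*\Omega^{i-1}_Y(\log E)(p^nA)\to B\Omega^i_Y(\log E)(p^nA)\to 0 .
\]
This reduces the claim to two vanishings, for all $a+k\ge d$: that $R^k\pi_*\Omega^a_Y(\log E)(p^lA)=0$, and that $R^{k+1}\pi_*Z\Omega^a_Y(\log E)(p^lA)=0$. The first is exactly hypothesis (1) when $a+k=d$. When $a+k>d$ I would feed it back in through the decomposition above for the $\bQ$-divisor $p^{l-1}A$. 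That decomposition splits $Z\Omega^a_Y(\log E)(p^lA)$, up to boundary terms, into $\Omega^a_Y(\log E)(p^{l-1}A)$ and $B\Omega^a_Y(\log E)(p^lA)$. This suggests a descending induction on $a$ (equivalently on $a+k$), starting from $R^k\pi_*=0$ for $k\ge d$, which holds because the fibres of $\pi$ have dimension at most $d-1$. Alongside it I would run the companion vanishing $R^{k+1}\pi_*B\Omega^{a}_Y(\log E)(p^lA)=0$ for $a+k\ge d$.

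I expect the main obstacle to be the bookkeeping in this last step. Hypothesis (1) is stated only for $i+j=d$, so the cases $i+j>d$ must come from the induction interleaving $B$, $Z$ and $\Omega$ with $\lfloor p^lA\rfloor$ twists. One also has to check at each stage that the twists remain of the form $p^lA$ with $l>0$ and that the condition $\rup{A}=0$ is preserved. If a direct induction fails, I would instead invoke the decomposition on the whole truncated complex $\tau^{\le i}F_*\Omega^\bullet_Y(\log E)(p^lA)$ and compare hypercohomology spectral sequences. This should control all $R^{j+1}\pi_*B\Omega^i_Y(\log E)(p^lA)$ at once, with hypothesis (1) killing the only potentially nonzero terms on the line $i+j=d$.
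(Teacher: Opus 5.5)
Your overall skeleton (reduce to $R^{j+1}\pi_*B\Omega^i_Y(\log E)(p^lA)=0$ for $i+j\ge d$, $l>0$, then use the two short exact sequences) matches the paper. However, Step 1 is false, and Step 3 omits the one vanishing theorem that makes the recursion close.

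\textbf{Step 1.} The ``projection of $Q$ onto its last term'' $Q\to Z\Omega^i_Y(\log E)(pA)[-i]$ is not a morphism of complexes, because it does not kill the image of $d\colon F_*\Omega^{i-1}_Y(\log E)(pA)\to Z\Omega^i_Y(\log E)(pA)$. Only its composite with $C$ is a chain map, namely the canonical map $\tau^{\le i}F_*\Omega^\bullet_Y(\log E)(pA)\to\cH^i[-i]\cong\Omega^i_Y(\log E)(A)[-i]$. The decomposition splits that map, not the sheaf map $C$. A splitting of $C$ in $D(\cO_Y)$ would be an honest $\cO_Y$-linear section.

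Your argument is local on $Y$ and uses neither $\pi$ nor $A$, so it would apply to a supersingular elliptic curve $Y$ with $E=0$, $A=0$, $i=1$. There it would split $C\colon F_*\omega_Y\to\omega_Y$, i.e.\ make $Y$ $F$-split, which is false even though $Y$ lifts to $W_2(k)$.

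The case $n=1$ genuinely needs hypothesis (1). Take $d=i=2$, $j=0$, and $A$ with all coefficients in $(-1/p,0)$. Then the map is $\pi_*F_*\omega_Y\to\pi_*\omega_Y$. For the minimal resolution of the non-$F$-pure $E_8$-singularity $z^2+x^3+y^5=0$ in characteristic $5$, this is the non-surjective trace $F_*\omega_X\to\omega_X$. Yet $(Y,E)$ lifts to $W_2(k)$, since the obstruction lies in $H^2(Y,T_Y(-\log E))=0$.

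So the base case must also come from the vanishing $R^{j+1}\pi_*B\Omega^i_Y(\log E)(pA)=0$ for $i+j\ge d$. With that change, your Step 2 via \eqref{BZZ} is fine and equivalent to the paper's reduction through \eqref{BZOmega} and \eqref{eq:BBB}.

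\textbf{Step 3.} The two sequences you use (the same as the paper's) reduce $R^{j+1}\pi_*B\Omega^i_Y(\log E)(p^lA)=0$ to three things, with no splitting needed:
\begin{itemize}
\item a $B$-term of the same shape with $i$ lowered by one, so the recursion ends at $B\Omega^0=0$;
\item $R^k\pi_*\Omega^a_Y(\log E)(p^lA)=0$ on the line $a+k=d$ with $l>0$, which is exactly (1);
\item $R^k\pi_*\Omega^a_Y(\log E)(p^{l'}A)=0$ for $a+k>d$ and $l'\ge 0$. The twists $p^{l-1}A$, including $A$ itself, only occur strictly above the line.
\end{itemize}

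The last vanishing is the relative Akizuki--Nakano vanishing for the $W_2(k)$-liftable pair $(Y,E)$ and the $\pi$-ample $\bQ$-divisors $p^{l'}A$ with $p>d$, \cite[Corollary 3.8]{Hara98}. This is the key input you are missing. It is where $\pi$-ampleness enters: one iterates the decomposition $A\mapsto pA\mapsto p^2A\mapsto\cdots$ and finishes with relative Serre vanishing for $p^NA$, $N\gg0$.

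Your proposal never uses that $A$ is $\pi$-ample. A descending induction started from $R^{\ge d}\pi_*=0$ cannot produce these vanishings, because they fail for non-ample twists. For example, $R^1\pi_*\omega_Y(E)\neq 0$ when $\pi$ is the blow-up of the vertex of a cone over an elliptic curve. The same objection applies to your spectral-sequence fallback unless you feed in Serre vanishing at high twists.
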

\begin{proof}
By (\ref{BZOmega}), it suffices to prove that 
        \[
        R^{j+1}\pi_{*}B_{n}\Omega^i_Y(\log E)(p^nA)=0.
        \]
        In turn, by the short exact sequence
        \[
           0\to F^{n-1}_{*}B\Omega^i_Y(\log E)(p^lA)\to B_{l}\Omega^i_Y(\log E)(p^lA) \to B_{l-1}\Omega^i_Y(\log E)(p^{l-1}A) \to 0, 
        \]
        it suffices to show that 
        \[
        R^{j+1}\pi_{*}B\Omega^i_Y(\log E)(p^lA)=0
        \]
        for all $l>0$.
        Next, using the short exact sequences
        \begin{align*}
            &0\to Z\Omega^{i-1}_Y(\log E)(p^lA)\to F_{*}\Omega^{i-1}_Y(\log E)(p^lA)\to B\Omega^i_Y(\log E)(p^{l}A) \to 0\\
            &0\to B\Omega^{i-1}_Y(\log E)(p^lA)\to Z\Omega^{i-1}_Y(\log E)(p^lA)\to \Omega^{i-1}_Y(\log E)(p^{l-1}A) \to 0,
        \end{align*}
        we reduce recursively to proving that:\\[-0.8em]
        \begin{enumerate}
            \item[(a)] $R^{j}\pi_*\Omega^i_Y(\log E)(p^lA)=0$ when $l>0$, $i<d$, and $i+j = d$ \\[-0.8em]
            \item[(b)] $R^{j}\pi_*\Omega^i_Y(\log E)(p^{l-1}A)=0$ when $l>0$, $i<d$, and $i+j > d$.\\[-0.8em]
        \end{enumerate}
        Assertion (a) follows from Assumption (1), and Assertion (b) follows from Assumption (2) and the Akizuki-Nakano vanishing for $W_2(k)$-liftable pairs \cite[Corollary 3.8]{Hara98} since $p>d$. 
\end{proof}

The following result is the key component of the proof of the main theorem of this subsection. 

\begin{theorem}\label{thm:pre-k-Du Bois to pre-$F$-inj in p>0}
    Let $X$ be a normal $d$-dimensional affine variety over a perfect field of characteristic $p>d\coloneqq\dim X$, and let $\pi\colon Y\to X$ be a log resolution with the reduced exceptional divisor $E$ supporting a $\pi$-ample divisor $A$ such that $\rup{A}=0$ and $\Supp\, \{p^n A\} = E$ for all $n \geq 0$.
    Suppose that $X$ is $F$-rational and that for a fixed integer $m>0$:
    \begin{enumerate}
            \item $R\pi_{*}\Omega^i_Y(\log E) \cong \Omega^{[i]}_X$ for all $i\leq m$, \\[-0.8em]
        \item $R^{d-i}\pi_*\Omega^i_Y(\log E)(p^lA)=0$ for all $l>0$ and $i < d$, \\[-0.8em]
        \item the snc pair $(Y,E)$ lifts to $W_2(k)$.
    \end{enumerate}
    Then $X$ is pre-$m$-$F$-rational. 
\end{theorem}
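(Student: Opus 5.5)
By the observation right after Lemma \ref{lemma:Kawakami-vanishing}, Assumption (1) already gives that $C\colon Z\Omega^{[i]}_X\to\Omega^{[i]}_X$ is surjective and $R\pi_*B_n\Omega^i_Y(\log E)=B_n\Omega^{[i]}_X$ for all $i\le m$ and $n\ge 0$. So Condition (1) of Definition \ref{def:prekFinj-iso} holds. By Theorem \ref{thm:IndependenceIntro}, (4)$\Rightarrow$(1), and since $X$ is $F$-rational, it remains to show that
\[
C_n\colon H^j_\m(Z_n\Omega^{[i]}_X)\to H^j_\m(\Omega^{[i]}_X)
\]
is zero for $0<i\le m$, $i+j\le d$ and $n\gg0$. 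Using $Z_n\Omega^{[i]}_X=\pi_*Z_n\Omega^i_Y(\log E)$ and Lemma \ref{lemma:Kawakami-vanishing}(1), which gives $R\pi_*Z_n\Omega^i_Y(\log E)=Z_n\Omega^{[i]}_X$ for $i\le m-1$, I plan to move everything to $Y$ and twist by $A$.

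The key device is the factorisation of $C_n$ through the twisted cycles. Since $\lceil A\rceil=0$ and $A$ is supported on $E$, we have $\lfloor p^nA\rfloor\le 0$, so $Z_n\Omega^i_Y(\log E)\subseteq Z_n\Omega^i_Y(\log E)(p^nA)$ compatibly with $C_n$. Thus $C_n$ on $Y$ factors as
\[
Z_n\Omega^i_Y(\log E)\to Z_n\Omega^i_Y(\log E)(p^nA)\xrightarrow{C_n}\Omega^i_Y(\log E)(A)\to\Omega^i_Y(\log E).
\]
Applying $R\pi_*$ and local cohomology, it suffices to show that the composite
\[
H^j_\m(R\pi_*\Omega^i_Y(\log E)(A))\to H^j_\m(R\pi_*\Omega^i_Y(\log E))=H^j_\m(\Omega^{[i]}_X)
\]
becomes zero when preceded by $C_n$. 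I would handle this by Matlis/Grothendieck duality: the dual statement is about $R^{\bullet}\pi_*$ of $\Omega^{d-i}_Y(\log E)(-E)$ versus twists by $-A$, using $\Omega^i(\log E)^\vee\otimes\omega_Y(E)\cong\Omega^{d-i}(\log E)$. In the top-degree range $i+j\ge d$, Lemma \ref{lem:CSurj} gives that $R^j\pi_*Z_n\Omega^i_Y(\log E)(p^nA)\to R^j\pi_*\Omega^i_Y(\log E)(A)$ is surjective. I would then use the Leray spectral sequence of local cohomology, $H^a_\m(R^b\pi_*)$, to reduce the statement to the individual $R^b\pi_*$.

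The main step, and the one I expect to be the obstacle, is showing that the map $R\pi_*\Omega^i_Y(\log E)(A)\to R\pi_*\Omega^i_Y(\log E)$ kills local cohomology for $i+j\le d$. My first attempt would compare with the Frobenius-twisted versions. Let $F^n$ raise $A$ to $p^nA$; since $\operatorname{Supp}\{p^nA\}=E$, the twisted complexes remain log complexes along all of $E$. Then I would use Assumption (2) together with $W_2$-liftability (Akizuki--Nakano, as in Lemma \ref{lem:CSurj}) to get vanishing of $R^{b}\pi_*\Omega^i_Y(\log E)(p^lA)$ for $i+b\ge d$. By duality this yields that $H^a_\m(R\pi_*\Omega^i(\log E)(A))$ injects into, or is controlled by, degrees $a+i\ge d$.

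Any class in $H^j_\m(Z_n\Omega^{[i]}_X)$ with $i+j\le d$ then maps via $C_n$ into the image of these twisted groups. Because $X$ is $F$-rational, with the $i=0$ case playing the role of \cite{Hara98}, and because $H^j_\m(\Omega^{[i]}_X)$ for $j<d-i$ is annihilated after twisting (as in the proof of Theorem \ref{thm:IndependenceIntro}), I expect the iterate $n\gg0$ to kill the image. The final bookkeeping is to check the edge case $i+j=d$ separately, using Lemma \ref{lem:CSurj} exactly in degree $j=d-i$, and to confirm that the boundary map from $R\pi_*B_n$ is compatible with the twisting.
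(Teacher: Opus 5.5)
There is a genuine gap, and it starts with the sign of the twist. You correctly note $\rdown{p^nA}\le 0$, but this gives the opposite inclusion. Since $\rup{A}=0$ and $\Supp\{A\}=E$, every coefficient of $A$ lies in $(-1,0)$, so $\rdown{p^nA}\le -E$. Hence $Z_n\Omega^i_Y(\log E)(p^nA)\subsetneq Z_n\Omega^i_Y(\log E)$ and $\Omega^i_Y(\log E)(A)=\Omega^i_Y(\log E)(-E)$. The first arrow $Z_n\Omega^i_Y(\log E)\to Z_n\Omega^i_Y(\log E)(p^nA)$ of your factorisation therefore does not exist. You also use $+A$ twists both on the local-cohomology side and on the dual side, where Lemma \ref{lem:CSurj} lives, and both cannot be right. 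The factorisation that does exist uses $-A$: since $\rdown{-A}=0$ and $\rdown{-p^nA}\ge 0$,
\[
Z_n\Omega^i_Y(\log E)\subseteq Z_n\Omega^i_Y(\log E)(-p^nA)\xrightarrow{C_n}\Omega^i_Y(\log E)(-A)=\Omega^i_Y(\log E).
\]
The $+A$ twists of Lemma \ref{lem:CSurj} only appear after duality. So the ``main step'' you isolate, that an inclusion $R\pi_*\Omega^i_Y(\log E)(A)\to R\pi_*\Omega^i_Y(\log E)$ kills $H^j_\m$, is the wrong target. In the correct set-up the corresponding map is the identity, and what must kill $H^j_\m$ is $C_n$ itself on $H^j_\m(R\pi_*Z_n\Omega^i_Y(\log E)(-p^nA))$ for $n\gg0$.

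The more serious gap is that you give no mechanism for this vanishing. Your last paragraph only expects it, appealing to $F$-rationality (which only handles $i=0$) and to annihilating $H^j_\m(\Omega^{[i]}_X)$ by twisting along ${\rm Sing}(X)$; neither supplies it. Two ingredients are missing.

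(a) Relative Serre vanishing: $H^j_\m(R\pi_*F^n_*\Omega^i_Y(\log E)(-p^nA))=0$ for $j<d$ and $n\gg0$. By local and Grothendieck duality this group is dual to (the completion of) $F^n_*R^{d-j}\pi_*\Omega^{d-i}_Y(\log E)(p^nA)$, with $d-j>0$ and $A$ $\pi$-ample.

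(b) Lemma \ref{lem:dual}: $\cHom\bigl(F^n_*\Omega^i_Y(\log E)(-p^nA)/B_n\Omega^i_Y(\log E)(-p^nA),\omega_Y\bigr)\cong Z_n\Omega^{d-i}_Y(\log E)(-E+\rup{p^nA})$. This equals $Z_n\Omega^{d-i}_Y(\log E)(p^nA)$ precisely because $\Supp\{p^nA\}=E$. Your pairing $\Omega^i(\log E)^\vee\otimes\omega_Y(E)\cong\Omega^{d-i}(\log E)$ covers only the $\Omega$-terms, not the $Z_n$ versus $F^n_*\Omega/B_n$ pairing, and without the latter Lemma \ref{lem:CSurj} cannot be brought to bear.

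With (a) and (b) the argument closes. Map $0\to B_n\to Z_n\xrightarrow{C_n}\Omega^i\to0$ to $0\to B_n\to F^n_*\Omega^i\to F^n_*\Omega^i/B_n\to0$, with $B_n,Z_n,F^n_*\Omega^i$ twisted by $-p^nA$ and $\Omega^i$ by $-A$. Then vanishing of $C_n$ on $H^j_\m$ is equivalent to injectivity of $\delta\circ C_n^{-1}\colon H^j_\m(R\pi_*\Omega^i_Y(\log E)(-A))\to H^{j+1}_\m(R\pi_*B_n\Omega^i_Y(\log E)(-p^nA))$. Here $\delta$ is injective on $H^j_\m$ by (a). By (b), the dual of $C_n^{-1}$ is $C_n\colon R^{d-j}\pi_*Z_n\Omega^{d-i}_Y(\log E)(p^nA)\to R^{d-j}\pi_*\Omega^{d-i}_Y(\log E)(A)$, which is surjective by Lemma \ref{lem:CSurj} since $(d-i)+(d-j)\ge d$; so $C_n^{-1}$ is injective.

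Once repaired, your route through Theorem \ref{thm:IndependenceIntro}(4) is just the paper's route through (3) and Claim \ref{claim:tocharp}, read through this short exact sequence. The Leray spectral sequence is unnecessary. A minor point: here Lemma \ref{lemma:Kawakami-vanishing}(1) gives $R^{>0}\pi_*Z_n\Omega^i_Y(\log E)=0$ for all $i\le m$, not just $i\le m-1$, though you only need the natural map $\pi_*Z_n\to R\pi_*Z_n$.
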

\begin{proof}
By Lemma \ref{lemma:Kawakami-vanishing} and Assumption (1), we get that $C\colon Z\Omega^{[i]}_{X}\to \Omega^{[i]}_{X}$ is surjective for all $i\leq m$ and
\[
R\pi_*B_n\Omega^i_Y(\log E) = B_n\Omega^{[i]}_X
\]
for all $i \leq m$. 
Now, by Theorem \ref{thm:IndependenceIntro} (3), it it remains to show that there exists an integer $n_0>0$ such that the composition
\begin{equation} \label{eq:tocharpGoal}
H^j_\m(\Omega^{[i]}_{X}) \xrightarrow{C_n^{-1}} H^j_\m\left(\frac{F^n_{*}\Omega^{[i]}_{X}}{B_n\Omega^{[i]}_{X}}\right) \xrightarrow{\delta} H^{j+1}_\m(B_n\Omega^{[i]}_X) 
\end{equation}
is injective for all $i, j, n$ such that
$0 < i \leq m$,  $i+j \leq d$, and $n \geq n_0$. 

To this end, consider the following commutative diagram for $i \leq m$:
\[
\begin{tikzcd}[column sep = small]
H_{\m}^j(\Omega^{[i]}_{X}) \arrow[r,"C^{-1}_{n}"]\arrow{d}{=} & H_{\m}^j\left(\frac{F^n_{*}\Omega_X^{[i]}}{B_n\Omega^{[i]}_{X}}\right) \arrow{d}{=} \ar{r}{\delta} & H_{\m}^{j+1}(B_n\Omega^{[i]}_{X})  \ar{d}{=} \\
H_{\m}^j(R\pi_{*}\Omega^i_{Y}(\log E)) \ar{d}{=} \arrow[r,"C^{-1}_{n}"]& H_{\m}^j\left(R\pi_{*}\frac{F^n_*\Omega^i_{Y}(\log E)}{B_n\Omega^i_Y(\log E)}\right) \ar{d} \ar{r}{R\pi_*\delta_Y} & H_{\m}^{j+1}(R\pi_*B_n\Omega^i_Y(\log E)) \ar{d} \\
H_{\m}^j(R\pi_{*}\Omega^i_{Y}(\log E)(-A)) \arrow[r,"C^{-1}_{n}"]& H_{\m}^j\left(R\pi_{*}\frac{F^n_*\Omega^i_{Y}(\log E)(-p^nA)}{B_n\Omega^i_Y(\log E)(-p^nA)}\right) \ar{r} & H_{\m}^{j+1}(R\pi_*B_n\Omega^i_Y(\log E)(-p^nA)).
\end{tikzcd}
\]

\begin{claim} \label{claim:tocharp} The map
\[
H_{\m}^j(R\pi_{*}\Omega^i_{Y}(\log E)(-A)) \xrightarrow{C^{-1}_{n}} H_{\m}^j\left(R\pi_{*}\frac{F^n_*\Omega^i_{Y}(\log E)(-p^nA)}{B_n\Omega^i_Y(\log E)(-p^nA)}\right)
\]
is injective for all $i, j, n \geq 0$ such that $1 \leq i \leq m$, $i+j \leq d$, and $n \gg 0$.
\end{claim}

Note that this claim implies our goal (\ref{eq:tocharpGoal}). Indeed, this follows by tracing through the following exact sequence:
\begin{align*} \label{eq:independentBigDiagramToCharP} \nonumber
0 = H_{\m}^j(R\pi_{*}F^n_*\Omega^i_{Y}(\log E)(-p^nA)) \to& \ H_{\m}^j\left(R\pi_{*}\frac{F^n_*\Omega^i_{Y}(\log E)(-p^nA)}{B_n\Omega^i_Y(\log E)(-p^nA)}\right) \\ \xrightarrow{R\pi_*\delta_{Y,-p^nA}}& \ H^{j+1}_\m(R\pi_*B_n\Omega^{i}_Y(\log E)(-p^nA))
\end{align*}
in which the vanishing on the left holds by local duality and Serre's vanishing as $j<d$ and we can assume that $n \gg 0$.\\

Next, we move to the proof of Claim \ref{claim:tocharp}. 
By local duality and Lemma \ref{lem:dual} below,
     the map from the claim becomes:
\[
C_n \colon R^{d-j}\pi_*Z_n\Omega^{d-i}_Y(\log E)(-E+\rup{p^nA}) \to R^{d-j}\pi_*\Omega^{d-i}_Y(\log E)(-E+\rup{A}).
\] 
Replacing $d-i$ and $d-j$ with $i$ and $j$ respectively, 
and observing that $-E + \rup{A} = \rdown{A}$ and $-E + \rup{p^nA} = \rdown{p^nA}$ thanks to $\Supp\, \{p^n A\} = E$, our problem reduces to showing that
\[
R^j\pi_{*}Z_{n}\Omega^i_Y(\log E)(p^nA)
        \xrightarrow{C_n} 
        R^j\pi_{*}\Omega^i_Y(\log E)(A)
\]
is surjective for all $i\geq d-m$ and $i + j \geq d$. This follows from Lemma \ref{lem:CSurj}.

\end{proof}

\begin{lemma}\label{lem:dual}
    Let $Y$ be a smooth variety over a perfect field of characteristic $p>0$, and let $E$ be a simple normal crossing divisor.
    Let $A$ be a $\Q$-divisor such that $\Supp\, \{p^n A\} = E$ for all $n \geq 0$.
    Then \begin{equation}\label{dual1}
        \cHom\left(\frac{F^n_*\Omega^i_{Y}(\log E)(-p^nA)}{B_n\Omega^i_Y(\log E)(-p^nA)},\omega_Y\right)\cong Z_n\Omega^{d-i}_Y(\log E)(-E+\rup{p^nA})
    \end{equation}
     for all $i\geq 0$ and $n\geq 1$.
\end{lemma}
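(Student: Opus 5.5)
The plan is to prove \eqref{dual1} by induction on $n$. We fix a divisor $B'$ and aim to show that the pairing
\[
F^n_*\Omega^i_Y(\log E)(-p^nA)\otimes Z_n\Omega^{d-i}_Y(\log E)(B'),\qquad \alpha\otimes\beta\mapsto C^n(\alpha\wedge\beta)\in\omega_Y,
\]
is well defined and kills $B_n\Omega^i_Y(\log E)(-p^nA)$. We then show it induces a perfect pairing, with $B'=-E+\rup{p^nA}$.

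First I would determine the divisor. The wedge product of log forms with twists $\rdown{-p^nA}$ and $\rdown{p^n A'}$ lands in $\Omega^d_Y(\log E)(\rdown{-p^nA}+\rdown{p^nA'}) = \omega_Y(E+\rdown{-p^nA}+\rdown{p^nA'})$. Writing $\rdown{-p^nA}=-\rup{p^nA}$ and taking $B'=-E+\rup{p^nA}$, the product lands in $F^n_*\omega_Y$. Applying the iterated Cartier (trace) operator $C^n\colon F^n_*\omega_Y\to\omega_Y$ then gives a map to $\omega_Y$.

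The vanishing on boundaries comes from two facts: $C$ kills exact forms, and $d\gamma\wedge\beta=\pm d(\gamma\wedge\beta)$ when $\beta$ is closed. For $n\ge 2$ one also uses that $C$ of an element of $Z_n$ lies in $Z_{n-1}$, which is exactly the inductive structure of \eqref{BZZ} and \eqref{eq:BBB}. The hypothesis $\Supp\{p^nA\}=E$ is what makes the $E$-twist exactly cancel: every component of $E$ carries a nonzero fractional part.

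Perfectness is a local question, so I would check it étale-locally in coordinates $x_1,\dots,x_d$ with $E=(x_1\cdots x_r)$, where $\rdown{p^nA}$ is a monomial divisor. In that setting, $\Omega^\bullet_Y(\log E)(D)$ decomposes as a direct sum over monomial weights in $\frac1{p^n}\bZ^d/\bZ^d$-graded pieces, as in Hara's and KTTWYY's computations. We have $B_n\subseteq Z_n\subseteq F^n_*\Omega$, and both the quotient $F^n_*\Omega/B_n$ and $Z_n$ are locally free. It therefore suffices to show the pairing is nondegenerate on each graded piece. Alternatively one can run the induction via short exact sequences: dualize \eqref{eq:BBB2} and \eqref{ZZB}, and use the base case $n=1$, namely $\cHom(F_*\Omega^i/B\Omega^i,\omega)\cong Z\Omega^{d-i}$ with the twist, which is \cite[Lemma 3.3]{Hara98}-type duality. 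The five lemma then compares the two extensions.

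The main obstacle is the bookkeeping of the twists under iteration. One must check that the divisor $-E+\rup{p^nA}$ is compatible with the sequences relating level $n$ to level $n-1$, i.e.\ $\rup{p^nA}$ versus $p\rup{p^{n-1}A}$, so that the dual of the level-$(n-1)$ identification, pushed by $F_*$, matches the $Z_{n-1}$ appearing in \eqref{ZZB} for $Z_n$. The hypothesis on $\Supp\{p^nA\}$ for all $n$ is what I expect to make these rounding identities work. In practice I would first try the explicit monomial computation, since it avoids tracking extension classes.
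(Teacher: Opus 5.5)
Your proposal is sound, and the route you list as the alternative is essentially the paper's proof. The case $n=1$ comes from dualising $F_*\Omega^{i-1}_Y(\log E)(-pA)\xrightarrow{F_*d}F_*\Omega^{i}_Y(\log E)(-pA)\to F_*\Omega^i/B\Omega^i\to 0$, using $\cHom(F_*\Omega^i_Y(\log E)(-pA),\omega_Y)\cong F_*\Omega^{d-i}_Y(\log E)(-E+\rup{pA})$ and the fact that $d$ is self-dual up to sign. The induction step dualises $0\to B\Omega^i_Y(\log E)(-pA)\xrightarrow{C_{n-1}^{-1}}F_*(F^{n-1}_*\Omega^i/B_{n-1}\Omega^i)\to F^n_*\Omega^i/B_n\Omega^i\to 0$, obtained from \eqref{eq:BBB2}, and compares the result with \eqref{ZZB}.

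Your trace pairing is not in the paper, and it is worth keeping. The paper identifies the dualised surjection $F_*Z_{n-1}\to B\Omega^{d-i+1}$ with the map $(\star)$ of \eqref{ZZB} only implicitly. Your check $C^n(B_n\wedge Z_n)=0$ instead gives an honest inclusion $Z_n\subseteq B_n^{\perp}$ inside $F^n_*\Omega^{d-i}_Y(\log E)(-E+\rup{p^nA})$. Both are kernels of surjections from $F_*Z_{n-1}$ onto locally free sheaves isomorphic to $B\Omega^{d-i+1}_Y(\log E)(-E+\rup{pA})$, so the induced surjection between the two quotients is an isomorphism. Hence $Z_n=B_n^{\perp}$ with no map-matching. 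In the same language the base case is a one-liner: $\beta\perp B\Omega^i$ iff $C(\gamma\wedge d\beta)=0$ for all $\gamma$ iff $d\beta=0$. So no appeal to Hara is needed.

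The monomial route you say you would try first is unnecessary. For $n\geq 2$ it would need an explicit description of the graded pieces of $Z_n$ and $B_n$, which you do not supply, so as written it is a plan rather than an argument.

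Two corrections.

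First, you omit the step $\cHom(B\Omega^i_Y(\log E)(-pA),\omega_Y)\cong B\Omega^{d-i+1}_Y(\log E)(-E+\rup{pA})$. It is needed to identify the right-hand term of the dualised sequence. The paper obtains it by dualising $0\to Z\Omega^{i-1}\to F_*\Omega^{i-1}\to B\Omega^i\to 0$ and using the case $n=1$.

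Second, the obstacle you anticipate ($\rup{p^nA}$ versus $p\rup{p^{n-1}A}$) does not arise. Induct on $n$ for all $A$ satisfying the support hypothesis at once, and apply the induction hypothesis to $pA$ rather than $A$. The middle term is then exactly $F_*Z_{n-1}\Omega^{d-i}_Y(\log E)(-E+\rup{p^{n-1}\cdot pA})$. The only rounding needed is $\rdown{(-E+\rup{p^nA})/p^{n-1}}=-E+\rup{pA}$, for the $B$-term of \eqref{ZZB}. The support hypothesis turns this into $\rdown{\rdown{p^nA}/p^{n-1}}=\rdown{pA}$. The cancellation $\rdown{-p^nA}+(-E+\rup{p^nA})=-E$ in your pairing holds for every $\Q$-divisor, so the hypothesis plays no role there.
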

\begin{proof}
    We prove the assertion by ascending induction on $n$.
    By the definition of $B\Omega^{i}_Y(\log E)(-pA)$, we have an exact sequence
    \[
    F_{*}\Omega^{i-1}_Y(\log E)(-pA)\xrightarrow{F_{*}d} F_{*}\Omega^{i}_Y(\log E)(-pA)\to \frac{F_{*}\Omega^{i}_Y(\log E)(-pA)}{B\Omega^{i}_Y(\log E)(-pA)}\to 0.
    \]
    By taking $\cHom(-,\omega_Y)$, we get an exact sequence
    \begin{multline*}
        0\to \cHom\left(\frac{F_{*}\Omega^{i}_Y(\log E)(-pA)}{B\Omega^{i}_Y(\log E)(-pA)},\omega_Y\right)\to 
    F_{*}\Omega^{d-i}_Y(\log E)(-E+\rup{pA})\\\xrightarrow{F_*d} F_{*}\Omega^{d-i+1}_Y(\log E)(-E+\rup{pA}). 
    \end{multline*}
    By the definition of $Z\Omega^{d-i}_Y(\log E)(-E+\rup{pA})$, we then obtain \eqref{dual1} for $n=1$.

    Next, consider the short exact sequence
    \[
    0\to Z\Omega^{i-1}_Y(\log E)(-pA)\to F_{*}\Omega^{i-1}_Y(\log E)(-pA)\to B\Omega^{i}_Y(\log E)(-pA)\to 0.
    \]
    By taking $\cHom(-,\omega_Y)$, we get an exact sequence
    \begin{multline*}
        0\to \cHom\left(B\Omega^{i}_Y(\log E)(-pA),\omega_Y\right)\to 
    F_{*}\Omega^{d-i+1}_Y(\log E)(-E+\rup{pA})\\\to \frac{F_{*}\Omega^{d-i+1}_Y(\log E)(-E+\rup{pA})}{B\Omega^{d-i+1}_Y(\log E)(-E+\rup{pA})}\to 0,
    \end{multline*}
    where we used the case $n=1$ of \eqref{dual1} in the last term.
    Thus 
    \[
    \cHom\left(B\Omega^{i}_Y(\log E)(-pA),\omega_Y\right)\cong B\Omega^{d-i+1}_Y(\log E)(-E+\rup{pA}).
    \]
    By \eqref{eq:BBB2}, we have an isomorphism
    \[
    C_{n-1}^{-1} \colon B\Omega^i_Y(\log E)(-pA)  \xrightarrow{\cong} \left(\frac{B_n\Omega^i_Y(\log E)(-p^{n}A)}{F_{*}B_{n-1}\Omega^i_Y(\log E)(-p^{n}A)}\right).
    \]
    Thus, we obtain a short exact sequence
    \begin{multline*}
        0\to B\Omega^i_Y(\log E)(-pA) \xrightarrow{C_{n-1}^{-1}} F_{*}\left(\frac{F^{n-1}_{*}\Omega^i_Y(\log E)(-p^{n}A)}{B_{n-1}\Omega^i_Y(\log E)(-p^{n}A)}\right)\\\to \frac{F^{n}_{*}\Omega^i_Y(\log E)(-p^{n}A)}{B_{n}\Omega^i_Y(\log E)(-p^{n}A)}\to 0.
    \end{multline*}
    
    By taking the dual, we get
    \begin{multline*}
        0\to \cHom\left(\frac{F^{n}_{*}\Omega^i_Y(\log E)(-p^{n}A)}{B_{n}\Omega^i_Y(\log E)(-p^{n}A)},\omega_Y\right) \to F_*Z_{n-1}\Omega^{d-i}_Y(\log E)(-E+\rup{p^nA}) \\
        \to B\Omega^{d-i+1}_Y(\log E)(-E+\rup{pA})\to 0,
    \end{multline*}
    where we used the induction hypothesis for the middle term. 
    Since $\Supp\, \{p^i A\} = E$ for every $i\geq 0$, we have $\rdown{p^iA}=-E+\rup{p^iA}$. 
    Thus, by \eqref{ZZB}, we conclude that
    \[
    \cHom\left(\frac{F^{n}_{*}\Omega^{i}_Y(\log E)(-p^{n}A)}{B_{n}\Omega^i_Y(\log E)(-p^{n}A)},\omega_Y\right)\cong Z_{n}\Omega^{d-i}_Y(\log E)(-E+\lceil p^nA \rceil),
    \]
    as desired.
\end{proof}

\begin{theorem}\label{thm:pre-k-F-rational type to pre-$k$-rational}
    Let $X$ be a normal affine variety over an algebraically closed field $K$ of characteristic zero with pre-$m$-rational singularities for an integer $m < \codim_X {\rm Sing}(X)$.
    Then,
    given a model of $X$ over a finitely generated $\mathbb{Z}$-subalgebra $A$ of $K$,
    there exists a non-empty open subset $S\subseteq \Spec A$ such that 
    $X_s$ is pre-$m$-$F$-rational for all closed points $s\in S$.
\end{theorem}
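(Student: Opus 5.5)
The plan is to reduce to Theorem \ref{thm:pre-k-Du Bois to pre-$F$-inj in p>0} by spreading out suitable data from characteristic zero, and to verify its three hypotheses on the fibres $X_s$ for closed points $s$ in a dense open subset of $\Spec A$.

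\textbf{Step 1: data in characteristic zero.} Choose a strong log resolution $\pi\colon Y\to X$ with reduced exceptional divisor $E$ supporting a $\pi$-ample $\bQ$-divisor $A_0$ with $\rup{A_0}=0$. This is possible by taking $\pi$ to be a composition of blow-ups along smooth centres, so that $-\sum a_iE_i$ is $\pi$-ample with $a_i>0$; we then rescale by a small positive rational. We also choose the coefficients of $A_0$ to have denominators that are powers of a fixed prime $q$ and not integers. For $s$ of residue characteristic $p\neq q$, each $p^nA_0$ then has nonzero fractional part along every component of $E$, so $\Supp\{p^nA_0\}=E$ for all $n\ge 0$.

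Since $m<\codim_X{\rm Sing}(X)$ and $X$ is pre-$m$-rational, Proposition \ref{prop:characterization of pre-k-rational} gives $R^{>0}\pi_*\Omega^i_Y(\log E)=0$ for $i\le m$. Moreover $\pi_*\Omega^i_Y(\log E)=\Omega^{[i]}_X$ for all $i$, as used in the proof of Theorem \ref{thm:k-F-rational to k-rational} (\cite[Proposition 2.29]{Kawakami-Witaszek(ch=0)}). Hence $R\pi_*\Omega^i_Y(\log E)\cong\Omega^{[i]}_X$ for $i\le m$. Also $X$ is rational (the case $m\ge0$), so by Hara/Mehta–Srinivas its reductions $X_s$ are $F$-rational for $s$ in an open set.

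\textbf{Step 2: spreading out.} Spread $\pi, E, A_0$ over $A$ and shrink $\Spec A$. After shrinking, $\pi_s$ is a log resolution and $(Y_s,E_s)$ lifts to $W_2(k(s))$. The latter holds because the model over $A$ provides a lift over $W_2$ after a flat base change; this is standard, as in \cite{Hara98}. We also arrange $p>d$ and $p\neq q$.

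By generic freeness, Lemma \ref{lem:Hara's lemma} and Corollary \ref{cor:reduction(local-coh)-Omega}, hypothesis (1) descends:
\[
R\pi_{s,*}\Omega^i_{Y_s}(\log E_s)\cong\Omega^{[i]}_{X_s}\quad\text{for } i\le m.
\]
Only finitely many $i$ are involved, and the reflexive hull commutes with base change on an open set by the \emph{moreover} part of that corollary.

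\textbf{Step 3: the main obstacle, hypothesis (2).} We must show $R^{d-i}\pi_{s,*}\Omega^i_{Y_s}(\log E_s)(p^lA_{0,s})=0$ for all $l>0$ and $i<d$. This involves infinitely many twists $p^lA_0$, so it cannot be obtained from a single generic freeness argument. My first approach is to use that $\rdown{p^lA_0}$ runs through divisors of the form $N\cdot(\text{something }\pi\text{-ample})$ plus a bounded correction. Write $\rdown{p^lA_0}=p^lA_0-\{p^lA_0\}$, where the fractional parts range over a finite set $\Sigma$ of $\bQ$-divisors, because the denominators are bounded. Then we treat two regimes.

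For large multiples, $R^{>0}\pi_*\Omega^i_Y(\log E)(\rdown{NA_0}-\sigma)$ vanishes by relative Serre vanishing. This can be made uniform in characteristic: apply Serre vanishing on the model over $A$ to each of the finitely many sheaves $\Omega^i(\log E)(-\sigma)\otimes\cO(rA')$, where $A'$ is an integral $\pi$-ample multiple and $0\le r<$ its index. The vanishing for all large $N$ holds over $A$ and, after Lemma \ref{lem:Hara's lemma}, on fibres.

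The remaining finitely many small $l$ are handled, since $p$ grows, only through $l$ with $p^l$ below the Serre bound. Once $p$ exceeds that bound, every $l\ge1$ lies in the Serre range. So for $p\gg0$ all $l>0$ are covered uniformly.

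Alternatively, degree-$(d-i)$ vanishing is a Steenbrink-type vanishing in characteristic zero, which could be reduced mod $p$ via Deligne–Illusie. I would use the Serre-vanishing argument first, since it uses only the relative ampleness of $A_0$ and the finiteness of the set of fractional parts.

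With (1)–(3) and $F$-rationality verified, Theorem \ref{thm:pre-k-Du Bois to pre-$F$-inj in p>0} yields that $X_s$ is pre-$m$-$F$-rational for all closed $s$ in the resulting open set $S$.
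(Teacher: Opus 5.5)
Your proposal is correct and follows essentially the same route as the paper. It spreads out a strong log resolution with a $\pi$-ample $\bQ$-divisor $A_0$ with $\rup{A_0}=0$, descends $F$-rationality and $R\pi_*\Omega^i_Y(\log E)\cong\Omega^{[i]}_X$ for $i\le m$, gets the twisted vanishing from relative Serre vanishing on the model with $p$ larger than the Serre bound, and then applies Theorem \ref{thm:pre-k-Du Bois to pre-$F$-inj in p>0}. The differences are cosmetic: you use prime-power denominators where the paper takes $p$ larger than all denominators, and you treat the finitely many fractional parts explicitly where the paper leaves this implicit. Also, the $W_2(k(s))$-lift does not come from a flat base change: after shrinking, $A$ is smooth over $\bZ$, so $A\to k(s)$ lifts to $A\to W_2(k(s))$ by formal smoothness (cf.\ \cite[Proposition 2.5]{ABL}).
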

\begin{proof} 
      Take a strong log resolution $\pi\colon Y\to X$ with the reduced exceptional divisor $E$ supporting a $\pi$-ample divisor. 
      We may assume that $X$ is rational and that $\pi_{*}\Omega^i_Y(\log E)=\Omega^{[i]}_X$ for all $i\geq0$ (see \cite{KS21}).
      Fix a $\pi$-ample divisor $D$ such that $\rup{D}=0$.
    By Proposition \ref{prop:characterization of pre-k-rational}, we have that $R^{>0}\pi_{*}\Omega^i_{Y}(\log E)=0$ for all $i\leq m$.
    By shrinking $\Spec A$, we can take a 
    model \[
    (\pi_A\colon Y_A\to X_A,\ E_A,\ D_A) \quad\text{of}\quad
    (\pi\colon Y\to X,\ E,\ D)
    \] 
    so that the following conditions are satisfied:
    \begin{enumerate}
        \item $\Exc(\pi_A)=E_A$ and $(Y_A, E_A)$ is a simple normal crossing pair over $\Spec A$. In particular, $\Omega^i_{Y_A}(\log E_A)$ is free over $A$ for all $i\geq 0$.
        \item $R^{>0}\pi_{A,*}\Omega^i_{Y_A}(\log E_A)=0$ for all $i\leq m$.
        \item $X_s$ is $F$-rational for all closed points $s \in \Spec A$ (see \cite[Theorem 1.1]{Mehta-Srinivas(rationalsingularities)} or \cite[Theorem 1.1]{Hara98}).
        \item $\pi_{s,*}\Omega^i_{Y_s}(\log E_s)\cong \Omega^{[i]}_{X_s}$ (see Corollary \ref{cor:reduction(local-coh)-Omega}).  
    \end{enumerate}
    By Serre vanishing, we can take an integer $n_0 \gg 0$ such that 
    \begin{equation}\label{eq:reduction2}
    R^{>0}\pi_{A,*}\Omega^i_{Y_A}(\log E_{A})(nD_{A})=0
     \end{equation}
    for all $i\geq 0$ and $n \geq n_0$. By replacing $\Spec A$ by an open subset, we may assume that for all closed points $s\in \Spec A$, we have $p_s\coloneq \mathrm{char}(\kappa(s))>n_0$. We can also assume that $p_s$ is bigger than $\dim X$ and the absolute values of all integers appearing in the denominators of the coefficients of $D$, in particular, $\Supp \{p_s^nD_s\}=E_s$ holds for all $s$ and $n\geq 0$.
   
    By combining Lemma \ref{lem:Hara's lemma}, (2), and (4)
    we obtain
    \[
    R\pi_{s,*}\Omega^i_{Y_s}(\log E_s) \cong \Omega^{[i]}_{X_s} 
    \]
    for all $i\leq m$. Moreover, by Lemma \ref{lem:Hara's lemma} and \eqref{eq:reduction2}, we obtain
    \[
    R^{>0}\pi_{s,*}\Omega^i_{Y_s}(\log E_s)(nD_s)=0
    \]
    for all $n \geq n_0$.
    In particular, we have
        \[
    R^{>0}\pi_{s,*}\Omega^i_{Y_s}(\log E_s)(p_s^{l}D_s)=0
    \]
    for all $l> 0$ and for all closed points $s\in \Spec A$ since $p_s>n_0$.
    Finally, the snc pair $(Y_s, E_s)$ lifts to $W_2(k(s))$ by \cite[Proposition 2.5]{ABL} as we can assume that $A$ is smooth over $\bZ$.
    
    Therefore  the assumptions of Theorem \ref{thm:pre-k-Du Bois to pre-$F$-inj in p>0} are satisfied and so $X_s$ is pre-$m$-$F$-rational for every closed point $s \in \Spec A$ as required. \qedhere
\end{proof}

\input{main.bbl}

% \bibliographystyle{skalpha}
% \bibliography{bibliography.bib}

\end{document}

%% file: main.bbl
% \bib, bibdiv, biblist are defined by the amsrefs package.

%% file: main.bbl
\begin{bibdiv}
\begin{biblist}

\bib{ABL}{article}{
      author={Arvidsson, Emelie},
      author={Bernasconi, Fabio},
      author={Lacini, Justin},
       title={On the {K}awamata-{V}iehweg vanishing theorem for log del {P}ezzo surfaces in positive characteristic},
        date={2022},
        ISSN={0010-437X},
     journal={Compos. Math.},
      volume={158},
      number={4},
       pages={750\ndash 763},
         url={https://doi.org/10.1112/S0010437X22007394},
      review={\MR{4438290}},
}

\bib{Artin(approximation)}{article}{
      author={Artin, M.},
       title={Algebraic approximation of structures over complete local rings},
        date={1969},
        ISSN={0073-8301},
     journal={Inst. Hautes \'{E}tudes Sci. Publ. Math.},
      number={36},
       pages={23\ndash 58},
         url={http://www.numdam.org/item?id=PMIHES_1969__36__23_0},
      review={\MR{268188}},
}

\bib{CDM24}{article}{
      author={Chen, Qianyu},
      author={Dirks, Bradley},
      author={Musta\c{t}\u{a}, Mircea},
       title={The minimal exponent and {$k$}-rationality for local complete intersections},
        date={2024},
        ISSN={2429-7100,2270-518X},
     journal={J. \'Ec. polytech. Math.},
      volume={11},
       pages={849\ndash 873},
         url={https://doi-org.utokyo.idm.oclc.org/10.5802/jep.267},
      review={\MR{4791993}},
}

\bib{CLS11}{book}{
      author={Cox, David~A.},
      author={Little, John~B.},
      author={Schenck, Henry~K.},
       title={Toric varieties},
      series={Graduate Studies in Mathematics},
   publisher={American Mathematical Society, Providence, RI},
        date={2011},
      volume={124},
        ISBN={978-0-8218-4819-7},
         url={https://doi-org.turing.library.northwestern.edu/10.1090/gsm/124},
      review={\MR{2810322}},
}

\bib{Friedman-Laza2}{article}{
      author={Friedman, Robert},
      author={Laza, Radu},
       title={The higher {D}u {B}ois and higher rational properties for isolated singularities},
        date={2024},
        ISSN={1056-3911,1534-7486},
     journal={J. Algebraic Geom.},
      volume={33},
      number={3},
       pages={493\ndash 520},
      review={\MR{4739666}},
}

\bib{Friedman-Laza1}{article}{
      author={Friedman, Robert},
      author={Laza, Radu},
       title={Higher {D}u {B}ois and higher rational singularities},
        date={2024},
        ISSN={0012-7094,1547-7398},
     journal={Duke Math. J.},
      volume={173},
      number={10},
       pages={1839\ndash 1881},
         url={https://doi.org/10.1215/00127094-2023-0051},
        note={Appendix by Morihiko Saito},
      review={\MR{4776417}},
}

\bib{GNPP}{book}{
      author={Guill\'{e}n, F.},
      author={Navarro~Aznar, V.},
      author={Pascual~Gainza, P.},
      author={Puerta, F.},
       title={Hyperr\'{e}solutions cubiques et descente cohomologique},
      series={Lecture Notes in Mathematics},
   publisher={Springer-Verlag, Berlin},
        date={1988},
      volume={1335},
        ISBN={3-540-50023-5},
         url={https://doi-org.kyoto-u.idm.oclc.org/10.1007/BFb0085054},
        note={Papers from the Seminar on Hodge-Deligne Theory held in Barcelona, 1982},
      review={\MR{972983}},
}

\bib{Gra}{article}{
      author={Graf, Patrick},
       title={Differential forms on log canonical spaces in positive characteristic},
        date={2021},
        ISSN={0024-6107},
     journal={J. Lond. Math. Soc. (2)},
      volume={104},
      number={5},
       pages={2208\ndash 2239},
      review={\MR{4368674}},
}

\bib{Hara98}{article}{
      author={Hara, Nobuo},
       title={A characterization of rational singularities in terms of injectivity of {F}robenius maps},
        date={1998},
        ISSN={0002-9327},
     journal={Amer. J. Math.},
      volume={120},
      number={5},
       pages={981\ndash 996},
         url={http://muse.jhu.edu/journals/american_journal_of_mathematics/v120/120.5hara.pdf},
      review={\MR{1646049}},
}

\bib{Huber-Jorder}{article}{
      author={Huber, Annette},
      author={J\"{o}rder, Clemens},
       title={Differential forms in the h-topology},
        date={2014},
        ISSN={2313-1691,2214-2584},
     journal={Algebr. Geom.},
      volume={1},
      number={4},
       pages={449\ndash 478},
         url={https://doi.org/10.14231/AG-2014-020},
      review={\MR{3272910}},
}

\bib{JKSY22}{article}{
      author={Jung, Seung-Jo},
      author={Kim, In-Kyun},
      author={Saito, Morihiko},
      author={Yoon, Youngho},
       title={Higher {D}u {B}ois singularities of hypersurfaces},
        date={2022},
        ISSN={0024-6115,1460-244X},
     journal={Proc. Lond. Math. Soc. (3)},
      volume={125},
      number={3},
       pages={543\ndash 567},
         url={https://doi.org/10.1112/plms.12464},
      review={\MR{4480883}},
}

\bib{Kaw4}{misc}{
      author={Kawakami, Tatsuro},
       title={Extendability of differential forms via {C}artier operators},
         how={\url{https://arxiv.org/abs/2207.13967v4}},
        date={2022},
        note={To appear in \emph{J.~Eur.~Math.~Soc.~(JEMS)}},
}

\bib{Kaw7}{article}{
      author={Kawakami, Tatsuro},
       title={On {S}teenbrink vanishing for rational singularities in positive characteristic},
        date={2025},
     journal={arxiv:2507.04838},
}

\bib{KS21}{article}{
      author={Kebekus, Stefan},
      author={Schnell, Christian},
       title={Extending holomorphic forms from the regular locus of a complex space to a resolution of singularities},
        date={2021},
        ISSN={0894-0347},
     journal={J. Amer. Math. Soc.},
      volume={34},
      number={2},
       pages={315\ndash 368},
         url={https://doi.org/10.1090/jams/962},
      review={\MR{4280862}},
}

\bib{Kawakami-Sato}{article}{
      author={Kawakami, Tatsuro},
      author={Sato, Kenta},
       title={Extending one-forms on {$F$}-regular singularities},
        date={2025},
     journal={arXiv:2502.17148},
}

\bib{Kawakami-Takamatsu}{article}{
      author={Kawakami, Tatsuro},
      author={Takamatsu, Teppei},
       title={On {F}robenius liftability of surface singularities},
        date={2024},
     journal={arXiv:2402.08152},
         url={https://arxiv.org/abs/2402.08152},
}

\bib{KTTWYY1}{article}{
      author={Kawakami, Tatsuro},
      author={Takamatsu, Teppei},
      author={Tanaka, Hiromu},
      author={Witaszek, Jakub},
      author={Yobuko, Fuetaro},
      author={Yoshikawa, Shou},
       title={Quasi-{$F$}-splittings in birational geometry},
        date={2025},
        ISSN={0012-9593,1873-2151},
     journal={Ann. Sci. \'Ec. Norm. Sup\'er. (4)},
      volume={58},
      number={3},
       pages={665\ndash 748},
      review={\MR{4962159}},
}

\bib{Kawakami-Witaszek}{article}{
      author={Kawakami, Tatsuro},
      author={Witaszek, Jakub},
       title={Higher {F}-injective singularities},
        date={2024},
     journal={arXiv:2412.08887},
}

\bib{Kawakami-Witaszek(ch=0)}{article}{
      author={Kawakami, Tatsuro},
      author={Witaszek, Jakub},
       title={Inversion of adjunction for higher rational singularities},
        date={2025},
     journal={arXiv:2510.11378},
}

\bib{Langer19}{article}{
      author={Langer, Adrian},
       title={Birational geometry of compactifications of {D}rinfeld half-spaces over a finite field},
        date={2019},
        ISSN={0001-8708},
     journal={Adv. Math.},
      volume={345},
       pages={861\ndash 908},
         url={https://doi.org/10.1016/j.aim.2019.01.031},
      review={\MR{3902334}},
}

\bib{MOPW23}{article}{
      author={Musta\c{t}\u{a}, Mircea},
      author={Olano, Sebasti\'an},
      author={Popa, Mihnea},
      author={Witaszek, Jakub},
       title={The {D}u {B}ois complex of a hypersurface and the minimal exponent},
        date={2023},
     journal={Duke Math. J.},
      volume={172},
      number={7},
       pages={1411\ndash 1436},
}

\bib{Mustata-Popa22}{article}{
      author={Musta\c{t}\u{a}, Mircea},
      author={Popa, Mihnea},
       title={Hodge filtration on local cohomology, {D}u {B}ois complex and local cohomological dimension},
        date={2022},
     journal={Forum Math. Pi},
      volume={10},
       pages={Paper No. e22, 58},
         url={https://doi-org.kyoto-u.idm.oclc.org/10.1017/fmp.2022.15},
      review={\MR{4491455}},
}

\bib{Mehta-Srinivas(rationalsingularities)}{article}{
      author={Mehta, V.~B.},
      author={Srinivas, V.},
       title={A characterization of rational singularities},
        date={1997},
        ISSN={1093-6106,1945-0036},
     journal={Asian J. Math.},
      volume={1},
      number={2},
       pages={249\ndash 271},
         url={https://doi-org.utokyo.idm.oclc.org/10.4310/AJM.1997.v1.n2.a4},
      review={\MR{1491985}},
}

\bib{Park-Popa1}{article}{
      author={Park, Sung~Gi},
      author={Popa, Mihnea},
       title={Hodge symmetry and {L}efschetz theorems for singular varieties},
        date={2025},
     journal={arXiv:2410.15638},
}

\bib{Park-Popa2}{article}{
      author={Park, Sung~Gi},
      author={Popa, Mihnea},
       title={Q-factoriality and {H}odge-{D}u {B}ois theory},
        date={2025},
     journal={arXiv:2508.17748},
}

\bib{Peter-Steenbrink(Book)}{book}{
      author={Peters, Chris A.~M.},
      author={Steenbrink, Joseph H.~M.},
       title={Mixed {H}odge structures},
      series={Ergebnisse der Mathematik und ihrer Grenzgebiete. 3. Folge. A Series of Modern Surveys in Mathematics [Results in Mathematics and Related Areas. 3rd Series. A Series of Modern Surveys in Mathematics]},
   publisher={Springer-Verlag, Berlin},
        date={2008},
      volume={52},
        ISBN={978-3-540-77015-2},
      review={\MR{2393625}},
}

\bib{popa2024injectivityvanishingdubois}{article}{
      author={Popa, Mihnea},
      author={Shen, Wanchun},
      author={Vo, Anh~Duc},
       title={Injectivity and {V}anishing for the {D}u {B}ois {C}omplexes of {I}solated {S}ingularities},
        date={2024},
     journal={arXiv:2409.18019},
         url={https://arxiv.org/abs/2409.18019},
}

\bib{Smith(rational)}{article}{
      author={Smith, Karen~E.},
       title={{$F$}-rational rings have rational singularities},
        date={1997},
        ISSN={0002-9327,1080-6377},
     journal={Amer. J. Math.},
      volume={119},
      number={1},
       pages={159\ndash 180},
         url={http://muse.jhu.edu.utokyo.idm.oclc.org/journals/american_journal_of_mathematics/v119/119.1smith.pdf},
      review={\MR{1428062}},
}

\bib{stacks-project}{misc}{
      author={{Stacks Project Authors}, The},
       title={\itshape {S}tacks {P}roject},
         how={\url{http://stacks.math.columbia.edu}},
        date={2026},
}

\bib{SVV}{article}{
      author={Shen, Wanchun},
      author={Venkatesh, Sridhar},
      author={Vo, Anh~Duc},
       title={On $ k $-{D}u {B}ois and $ k $-rational singularities},
        date={2023},
     journal={arXiv preprint arXiv:2306.03977},
}

\bib{Takagi-Watanabe}{article}{
      author={Takagi, Shunsuke},
      author={Watanabe, Kei-Ichi},
       title={{$F$}-singularities: applications of characteristic {$p$} methods to singularity theory [translation of {MR}3135334]},
        date={2018},
        ISSN={0898-9583},
     journal={Sugaku Expositions},
      volume={31},
      number={1},
       pages={1\ndash 42},
         url={https://doi.org/10.1090/suga/427},
      review={\MR{3784697}},
}

\end{biblist}
\end{bibdiv}
